\newtheorem{introthm}{Theorem}
\newtheorem{introcor}[introthm]{Corollary}
\newtheorem{theorem}{Theorem}[section]
\newtheorem{lemma}[theorem]{Lemma}
\newtheorem{proposition}[theorem]{Proposition}
\theoremstyle{definition}
\newtheorem{definition}[theorem]{Definition}
\newtheorem{example}[theorem]{Example}
\newtheorem{construction}[theorem]{Construction}
\newtheorem{remark}[theorem]{Remark}
\theoremstyle{remark}
\numberwithin{equation}{section}
\def\Chi{{\mathbb X}}
\def\div{{\rm div}}
\def\reg{{\rm reg}}
\def\rq#1{\widehat{#1}}
\def\t#1{\widetilde{#1}}
\def\b#1{\overline{#1}}
\def\bangle#1{\langle #1 \rangle}
\def\KK{{\mathbb K}}
\def\ZZ{{\mathbb Z}}
\def\WDiv{\operatorname{WDiv}}
\def\PDiv{\operatorname{PDiv}}
\def\id{{\rm id}}
\def\Cl{\operatorname{Cl}}
\def\Spec{{\rm Spec}}
\def\faces{{\rm faces}}
\def\im{{\rm im}}
\begin{document}
 
 \title{Valuative and geometric characterizations of Cox sheaves}
 
 \author[B.~Bechtold]{Benjamin Bechtold} 
\address{Mathematisches Institut, Universit\"at T\"ubingen,
Auf der Morgenstelle 10, 72076 T\"ubingen, Germany}
\email{benjamin.bechtold@uni-tuebingen.de}

\maketitle

\begin{abstract}
 We give an intrinsic characterization of Cox sheaves on Krull schemes in terms of their valuative algebraic properties. 
We also provide a geometric characterization of their graded relative spectra in terms of good quotients of graded schemes, extending the work of \cite{ArDeHaLa} on relative spectra of Cox sheaves on normal varieties. 
Moreover, we obtain an irredundant characterization of Cox rings which in turn produces a normality criterion for certain graded rings. 
\end{abstract}

\section*{Introduction}
Cox sheaves on normal (pre-)varieties $X$ currently are an active field of research with focus on questions of finite generation and explicit calculation of their ring $\mathcal{R}(X)$ of global sections (called the {\em Cox ring}) and quotient constructions describing $X$ in terms of a Cox ring and combinatorics \cite{Ba2011, BeHa2007, Br2007, CaTe2013, Co1995, Ha2008, HaSu2010, HuKe2000, StVe2010}. 
Known properties of Cox rings include triviality of homogeneous units and graded factoriality \cite{Ar2009, ArDeHaLa, BeHa2003}, i.e. factoriality of the monoid of non-zero homogeneous elements. In the case of a free class group $\Cl(X)$ the latter is equivalent to genuine factoriality of the Cox ring $\mathcal{R}(X)$ by a result from \cite{An1979}. 

Our present purpose is to investigate and characterize Cox sheaves in the more general setting of Krull schemes, i.e. schemes with a finite cover by spectra of Krull rings, compare \cite{LeOr1982}. 
A Cox sheaf on $X$ is a $\Cl(X)$-graded $\mathcal{O}_X$-algebra $\mathcal{R}$ with homogeneous components $\mathcal{O}_X(D)$ for all $[D] \in \Cl(X)$, equipped with a {\em natural} multiplication. This translates into the more formal requirement that there exists a 
graded morphism from the {\em divisorial $\mathcal{O}_X$-algebra} associated to $\WDiv(X)$
$$\pi : \mathcal{O}_X(\WDiv(X)) := \bigoplus_{D \in \WDiv(X)}{\mathcal{O}_X(D) \cdot \chi^D} \longrightarrow \mathcal{R}$$ 
 such that each restriction $\pi : \mathcal{O}_X(D) \rightarrow \mathcal{R}_{[D]}$ is an isomorphism, see Section~\ref{characterization-of-Cox-sheaves} for more motivation of this definition. 
 All Cox sheaves on $X$ are linked via the maps from $\mathcal{O}_X(\WDiv(X))$ and thus they have the same graded invariants and their monoids of homogeneous elements are isomorphic modulo units, although the Cox sheaves themselves need not be isomorphic. Furthermore, 
 one Cox sheaf has locally or globally finitely generated sections if and only if all Cox sheaves on $X$ do, see Proposition~\ref{prop:weak-uniqueness-of-cox-sheaves}.

Our main result is an intrinsic characterization of Cox sheaves in terms of their valuative algebraic properties. 
We start by illustrating our terminology. 
Recall that on a Krull scheme $X$ each prime divisor $Y$ defines a discrete valuation $\nu_{Y,X} : \mathcal{K}(X)^* \rightarrow \ZZ$ 
with valuation ring $\mathcal{K}(X)_{\nu_{Y,X}} = \mathcal{O}_{X,Y}$ and $\mathcal{O}_X(U)$ is the intersection over all $\mathcal{O}_{X,Y}$ with $Y \in U$. In terms of sheaves each $Y$ defines a {\em discrete valuation} $\nu_Y : \mathcal{K}^* \rightarrow \imath_Y(\ZZ)$ from the constant sheaf $\mathcal{K}^*$ onto the skyscraper sheaf at $Y$ and $\mathcal{O}_X$ is the intersection 
$$\mathcal{O}_X = \bigcap_{Y {\rm prime}}{\mathcal{K}_{\nu_Y}} \subseteq \mathcal{K}$$
over the corresponding {\em discrete valuation sheaves} $\mathcal{K}_{\nu_Y}$. Thus, $\mathcal{O}_X$ is a {\em Krull sheaf}. 
We will show that Cox sheaves admit a similar description and may even be characterized in such terms. 
Let $K := \Cl(X)$ and let $\mathcal{R}$ be a Cox sheaf. 
The role as an ambient sheaf of $\mathcal{R}$ is taken by the constant $K$-graded sheaf $\mathcal{S}$ assigning the stalk $\mathcal{R}_{\xi}$ at the generic point. Every homogeneous component of $\mathcal{S}$ is of type $\mathcal{K}$ and hence $\mathcal{S}$ is {\em $K$-simple}, i.e every homogeneous section is invertible. 
Each prime divisor $Y$ on $X$ now defines a {\em discrete $K$-valuation} $\mu_Y : \mathcal{S}^+ \rightarrow \imath_Y(\ZZ)$ on the subsheaf $\mathcal{S}^+ \subseteq \mathcal{S}$ of $K$-homogeneous non-zero elements and a corresponding {\em discrete $K$-valuation sheaf} $\mathcal{S}_{\mu_Y} \subseteq \mathcal{S}$ whose sections over $U$ are generated by all $K$-homogeneous elements that are valuated non-negatively by $\mu_{Y,U}$. $\mathcal{R}$ is then the intersection
$$
\mathcal{R} = \bigcap_{Y {\rm prime}}{\mathcal{S}_{\mu_Y}} \subseteq \mathcal{S}
$$
and is thereby a {\em $K$-Krull sheaf}. 
In this terminology, whose precise definitions are given in Section~\ref{divisorial-O_X-algebras}, 
our result is the following.

\begin{introthm}\label{intro-thm:char-of-Cox-sheaves}
 Let $X$ be a Krull scheme with generic point $\xi$, fraction field $\mathcal{K}$, essential valuations $\nu_Y$ for the prime divisors $Y \in X$, and let $K$ be an abelian group and $\mathcal{R}$ a $K$-graded sheaf on $X$. Then $\mathcal{R}$ is isomorphic to a Cox sheaf if and only if the following hold:
 \begin{enumerate}
   \item the constant sheaf $\mathcal{S}$ assigning $\mathcal{R}_{\xi}$ is $K$-simple with $\deg_K(\mathcal{S}^+) = K$ and $\mathcal{S}_0 = \mathcal{K}$, 
  \item $\mathcal{R}$ is a $K$-Krull sheaf in $\mathcal{S}$ defined by discrete $K$-valuations $\{\mu_Y\}_Y$ which restrict to $\{\nu_Y\}_Y$ on $\mathcal{K}^*$, in particular $\mathcal{R}_0 = \mathcal{O}_X$,
  \item $\div_K:= \sum_Y{\mu_Y}$ is surjective and $\div_{K,X}$ has kernel $\mathcal{R}(X)^{+,*} = \mathcal{R}(X)^*_0$. 
 \end{enumerate}
 In this case, 
 the required $\Cl(X)$-grading is provided by the natural 
 isomorphism $K \rightarrow \Cl(X), \deg_K(f) \mapsto [\div_{K,X}(f)]$.
 \end{introthm}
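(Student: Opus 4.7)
The plan is to prove the two implications separately. For the forward direction, assume $\mathcal{R}$ is (isomorphic to) a Cox sheaf with structure map $\pi : \mathcal{O}_X(\WDiv(X)) \to \mathcal{R}$. Each component $\pi : \mathcal{O}_X(D) \to \mathcal{R}_{[D]}$ being an isomorphism with generic stalk $\mathcal{O}_X(D)_\xi = \mathcal{K}$, the constant sheaf $\mathcal{S}$ assigning $\mathcal{R}_\xi$ has all homogeneous components equal to $\mathcal{K}$, yielding (i). On a homogeneous generic element $f = \pi(g\,\chi^D) \in \mathcal{S}^+$ I would define $\mu_Y(f) := \nu_Y(g) + \ord_Y(D)$; independence from the representative follows because altering $D$ by $\div(h)$ rescales $g$ by $h^{-1}$, and the component isomorphism then translates the defining property of $\mathcal{O}_X(D)$ into the $K$-Krull identity $\mathcal{R} = \bigcap_Y \mathcal{S}_{\mu_Y}$, yielding (ii). For (iii), the generic element $\pi(\chi^D) \in \mathcal{R}_\xi$ has $\div_{K,X} = D$, and by the isomorphism $\pi : \mathcal{O}_X \xrightarrow{\sim} \mathcal{R}_0$ any element with trivial $\div_{K,X}$ is a global unit of $\mathcal{O}_X(X)$ in degree $0$.

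For the converse the first step is to build the group homomorphism $c : K \to \Cl(X)$, $\deg_K(f) \mapsto [\div_{K,X}(f)]$. It is well-defined because any two elements of $\mathcal{S}^{+,*}(X)$ of the same $K$-degree differ by an element of $\mathcal{S}_0^* = \mathcal{K}^*$ thanks to (i), so their $\div_{K,X}$ differ by a principal divisor. Injectivity is forced by the kernel description in (iii), which places the kernel of $c$ in degree $0$. Surjectivity follows from the surjectivity of $\div_K$ in (iii): for every $D \in \WDiv(X)$ some homogeneous element realizes $D$ up to a principal correction, so $[D] \in \im(c)$.

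Next I would produce a Cox-sheaf structure map. Condition (iii) supplies the short exact sequence
\[
1 \longrightarrow \mathcal{R}(X)^*_0 \longrightarrow \mathcal{S}^{+,*}(X) \stackrel{\div_{K,X}}{\longrightarrow} \WDiv(X) \longrightarrow 0,
\]
and since $\WDiv(X) = \bigoplus_Y \ZZ$ is free abelian, this splits by a homomorphism $s : \WDiv(X) \to \mathcal{S}^{+,*}(X)$. Setting $f_D := s(D)$, the prescription $g\,\chi^D \mapsto g \cdot f_D$ defines a $\WDiv(X)$-graded $\mathcal{O}_X$-algebra homomorphism $\pi : \mathcal{O}_X(\WDiv(X)) \to \mathcal{R}$, because $s$ is multiplicative; composed with the regrading induced by $c$, this is the candidate Cox-sheaf structure.

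The main obstacle, and the heart of the argument, is verifying that each component $\pi : \mathcal{O}_X(D) \to \mathcal{R}_{[D]}$ is an isomorphism. Here (ii) is decisive: by $K$-simplicity and $\mathcal{S}_0 = \mathcal{K}$, every homogeneous section of $\mathcal{S}$ of $K$-degree $\deg_K(f_D)$ is uniquely of the form $g \cdot f_D$ with $g \in \mathcal{K}$, and $g \cdot f_D \in \mathcal{R}_{[D]}(U)$ iff $\mu_Y(g f_D) = \nu_Y(g) + \ord_Y(D) \ge 0$ for every prime $Y$ meeting $U$, that is, iff $g \in \mathcal{O}_X(D)(U)$. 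Assembling these component isomorphisms, $\mathcal{R}$ emerges as a Cox sheaf under the $\Cl(X)$-grading from $c$.
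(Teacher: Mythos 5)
Your proposal is correct and follows essentially the same route as the paper: the forward direction transports the valuations $\mu_Y$ from $\mathcal{O}_X(\WDiv(X))$ along the structure map (the paper does this via its general machinery for component-wise isomorphic epimorphisms, you do it by writing the formula and checking independence of the representative, which amounts to the same computation), and the converse builds the isomorphism $K\cong\Cl(X)$ from condition (iii) and then realizes each component isomorphism $\mathcal{O}_X(D)\to\mathcal{R}_{[D]}$ as multiplication by an element $f_D$ with $\div_{K,X}(f_D)=D$, chosen multiplicatively in $D$. Your splitting of the exact sequence $1\to\mathcal{R}(X)^*_0\to\mathcal{S}(X)^+\to\WDiv(X)\to 0$ is just the basis-free version of the paper's choice of $f_j$ for a basis $\{D_j\}$ of a subgroup $L$, so the two arguments coincide in substance.
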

 
 If $\mathcal{R}$ is a Cox sheaf with the required $\Cl(X)$-grading, then the above map is the identity on $\Cl(X)$. 
 This intrinsic characterization of Cox sheaves underlines the fact that they form a natural class of graded sheaves. 
   Properties (i) and (ii) are direct graded analoga of the properties of the structure sheaf, they occur in various graded $\mathcal{O}_X$-algebras e.g. in {\em divisorial $\mathcal{O}_X$-algebras} $\mathcal{O}_X(L)$ of subgroups $L \leq \WDiv(X)$. 
  Property (iii) ensures the correct $\Cl(X)$-grading, the first part being equivalent to surjectivity, the second to injectivity of the canonical map $K \rightarrow \Cl(X)$. 
  The reference for the equation $\mathcal{R}(X)^{+,*} = \mathcal{O}(X)^*$ is \cite{ArDeHaLa}.
  
  In Theorem~\ref{intro-thm:Cox-sheaf-props} below we give further details on Cox sheaves. We briefly explain the occuring graded properties and invariants. 
  A {\em $K$-integral} ring $R$ (i.e. a ring without $K$-homogeneous zero divisors) is {\em $K$-factorial} if the monoid $R^+$ of non-zero homogeneous elements is factorial. 
 The {\em homogeneous fraction ring} $Q^+(R)$ is the localization of $R$ by $R^+$. A {\em $K$-Krull ring} is the graded analogon of a Krull ring. Its {\em essential $K$-valuations} form the minimal family of $K$-valuations defining $R$ in $Q^+(R)$. They correspond bijectively to the {\em $K$-prime divisors}, i.e. the minimal non-zero $K$-prime ideals $\mathfrak{p}$ of $R$. The {\em $K$-valuation ring} of an essential $K$-valuation $\nu_{\mathfrak{p}}$ is the graded localization $R_{\mathfrak{p}}$. More detailed information on $K$-Krull rings is found in Section~\ref{K-Krull-rings}. 
 The essential $K$-valuations of a $K$-Krull sheaf $\mathcal{R}$ are defined in terms of the $K$-Krull rings $\mathcal{R}(U)$ for all affine $U$, see Section~\ref{divisorial-O_X-algebras}. 
 
 \begin{introthm}\label{intro-thm:Cox-sheaf-props} 
 Let $X$ be a Krull scheme. Then each Cox sheaf $\mathcal{R}$ on $X$ is quasi-coherent and has the following properties:
 \begin{enumerate}
  \item For each open $U$, the ring $\mathcal{R}(U)$ is $K$-factorial and $\deg_K(\mathcal{R}(U)^+)$ generates $K$. If $U$ is affine, then $\mathcal{S}(U) = Q^+(\mathcal{R}(U))$ and $\deg_K(\mathcal{R}(U)^+)$ equals $K$. 
  \item $\{\mu_Y\}_Y$ are the essential $K$-valuations of $\mathcal{R}$ and the sections of their valuation sheaves $\mathcal{S}_{\mu_Y}$ 
  are 
  \[
   \mathcal{S}_{\mu_Y}(U) = \left\{
\begin{array}{ll}
 \mathcal{R}_Y & Y \in U \\
 \mathcal{S}(X) & Y \notin U
  \end{array}
   \right. 
  \]
  in particular
  \[
   \mathcal{R}(U) = \bigcap_{Y \in U}{\mathcal{R}_Y} \subseteq \mathcal{S}(X) 
  \]
  \item The stalk at $x \in X$ is the $K$-local $K$-Krull ring
  \[
   \mathcal{R}_x = \bigcap_{x \in \b{\{Y\}}}{\mathcal{S}(X)_{\mu_{Y,X}}} = \bigcap_{x \in \b{\{Y\}}}{\mathcal{R}_Y} \subseteq \mathcal{S}(X)
  \]
 The homogeneous elements of its $K$-maximal ideal $\mathfrak{a}_x$ resp. the homogeneous units of $\mathcal{R}_x$ are
 \begin{align*}
  \mathfrak{a}_x \cap \mathcal{R}_x^+ & = \{f_x \in \mathcal{R}_x^+; \; \text{there ex. }U \ni x, f \in \mathcal{R}(U)^+  \text{ s.th. } x \in |\div_{K,U}(f)|\} \\
  \mathcal{R}_x^{+,*} & = \{f_x \in \mathcal{R}_x^+; \; \text{there ex. }U \ni x, f \in \mathcal{R}(U)^+ \text{ s.th. } \div_{K,U}(f) =0\} 
 \end{align*}
 and $\deg_K(\mathcal{R}_x^{+,*}) \subseteq \Cl(X)$ is the subgroup of classes $[D]$ represented by a divisor $D$ which is principal near $x$. 
  \item For a prime divisor $Y$, each generator of the maximal ideal of $\mathcal{O}_{X,Y} = (\mathcal{R}_Y)_0$ also generates $\mathfrak{a}_Y$, in particular, $\mathcal{R}_Y$ has units in every degree. 
 \end{enumerate}  
 \end{introthm}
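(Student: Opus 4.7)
The plan is to leverage the characterization in Theorem~\ref{intro-thm:char-of-Cox-sheaves} together with the canonical surjection $\pi : \mathcal{O}_X(\WDiv(X)) \to \mathcal{R}$, whose restriction to each degree gives an isomorphism $\mathcal{O}_X(D) \cong \mathcal{R}_{[D]}$. Most assertions then reduce to familiar statements about the divisorial sheaves $\mathcal{O}_X(D)$ combined with graded valuative reasoning. Quasi-coherence is immediate: each $\mathcal{R}_{[D]}$ is quasi-coherent on the Krull scheme $X$ as a direct limit of coherent reflexive sheaves, and $\mathcal{R}$ is their direct sum.

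For (i), the $K$-factoriality of $\mathcal{R}(U)$ follows by adapting the arguments of \cite{BeHa2003, ArDeHaLa} to the Krull setting, using the intersection description $\mathcal{R}(U) = \bigcap_{Y \in U} \mathcal{R}_Y$ from (ii) together with the graded-UFD criterion for $K$-Krull rings (cf.\ Section~\ref{K-Krull-rings}). Generation of $K$ by $\deg_K(\mathcal{R}(U)^+)$ is elementary: every $[D] \in K$ decomposes as $[D_+] - [D_-]$ with $D_\pm$ effective, and the constant $1 \in \mathcal{O}_X(D_\pm)(U)$ provides a nonzero element of $\mathcal{R}(U)_{[D_\pm]}$. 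For affine $U$, one sharpens this to surjectivity: the fractional divisorial ideal of any class in the Krull ring $\mathcal{O}_X(U)$ is nonzero, so $\mathcal{O}_X(D)(U) \neq 0$ for every $[D] \in K$. The identity $\mathcal{S}(U) = Q^+(\mathcal{R}(U))$ then follows by clearing denominators homogeneously, using that both multiplicands can be chosen in $\mathcal{R}(U)^+$.

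For (ii), the two cases of the formula for $\mathcal{S}_{\mu_Y}(U)$ are handled separately: when $Y \notin U$, the valuation $\mu_{Y,U}$ vanishes on $\mathcal{S}(U) = \mathcal{S}(X)$ since $\mu_Y$ factors through the skyscraper at $Y$, so $\mathcal{S}_{\mu_Y}(U) = \mathcal{S}(X)$; when $Y \in U$, localizing the intersection description at the $K$-prime corresponding to $Y$ gives $\mathcal{R}_Y$. Essentiality of $\{\mu_Y\}_Y$ matches the bijection between prime divisors of $X$ and minimal $K$-primes of $\mathcal{R}(U)$ for affine $U$, which follows from $\mathcal{R}(U)$ being $K$-Krull together with (i). Then (iii) is obtained by taking direct limits over $U \ni x$: only prime divisors $Y$ with $x \in \overline{\{Y\}}$ contribute, since otherwise $U$ can be shrunk away from $Y$. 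The valuative criterion then yields the descriptions of $\mathfrak{a}_x \cap \mathcal{R}_x^+$ and $\mathcal{R}_x^{+,*}$, because a homogeneous $f_x$ is a non-unit iff some $\mu_Y(f) > 0$ with $x \in \overline{\{Y\}}$, iff $x \in |\div_{K,U}(f)|$, and dually for units. The subgroup $\deg_K(\mathcal{R}_x^{+,*}) \subseteq \Cl(X)$ then consists precisely of those classes represented by a divisor principal on a neighborhood of $x$.

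Finally, for (iv), let $t \in \mathcal{O}_{X,Y}$ be a uniformizer, so $\mu_Y(t) = 1$ and $t \in \mathfrak{a}_Y$. Any $K$-homogeneous $f \in \mathfrak{a}_Y$ with $\mu_Y(f) = n > 0$ satisfies $\mu_Y(f t^{-n}) = 0$, hence $f t^{-n}$ is a $K$-homogeneous unit in $\mathcal{R}_Y$ of degree $\deg_K(f)$, and $f = t^n \cdot (f t^{-n})$ exhibits $t$ as a generator of $\mathfrak{a}_Y$. Existence of units in every degree follows from the DVR structure at $\xi_Y$: near $\xi_Y$ any Weil divisor $D$ has the local form $nY + (g)$ for some $n \in \ZZ$ and $g \in \mathcal{K}(X)^*$, and the corresponding local section of $\mathcal{O}_X(D)$ yields a unit in $\mathcal{R}_Y$ of degree $[D]$. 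The main technical hurdle lies in (iii), specifically the identification of $\deg_K(\mathcal{R}_x^{+,*})$ with the subgroup of classes principal near $x$; this requires a careful passage between the graded-local algebra of the stalk and the divisorial data on $X$, and it also underpins the argument for (iv).
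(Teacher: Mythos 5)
Your overall strategy --- reducing everything to valuative statements about the sheaves $\mathcal{O}_X(D)$ and the valuations $\mu_Y$ --- is close in spirit to the paper's, which however routes every assertion through the CIE $\pi : \mathcal{O}_X(L) \to \mathcal{R}$ and Proposition~\ref{prop:compw-iso-epis}, having done the hard work once for divisorial algebras in Proposition~\ref{prop:divisorial-O_X-algebras} and Theorem~\ref{thm:alg-divisorial-algebras}. Your more direct route has a genuine gap in the $K$-factoriality claim of (i) for \emph{non-affine} open $U$. The arguments of \cite{BeHa2003, ArDeHaLa} you invoke are tied to Cox sheaves of finite type on normal prevarieties, and the graded-UFD criterion ($K$-Krull with $\Cl^K = 0$) is a statement about the ring-theoretic $K$-class group, whose computation --- and the identification of the $K$-prime divisors of $\mathcal{R}(U)$ with the prime divisors $Y \in U$ --- is only available when $U$ is affine, where $Q^+(\mathcal{R}(U)) = \mathcal{S}(U)$; for non-affine $U$ neither holds in general (already $\mathcal{R}(U)_0 = \mathcal{O}(U)$ can collapse drastically). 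The paper's argument is purely monoid-theoretic and works for all $U$: $\div_{K,U}$ maps $\mathcal{R}(U)^+$ onto $\WDiv_{\geq 0}(U)$ with kernel $\mathcal{R}(U)^{+,*}$, so $\mathcal{R}(U)^+/\mathcal{R}(U)^{+,*}$ is a free, hence factorial, monoid.

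The second gap is the $K$-locality of $\mathcal{R}_x$ in (iii). You correctly identify the homogeneous non-units of $\mathcal{R}_x$ as those $f_x$ with $x \in |\div_{K,U}(f)|$, but to conclude that they form $\mathfrak{a}_x \cap \mathcal{R}_x^+$ for a \emph{single} $K$-maximal ideal you must show this set is closed under addition of elements of equal degree. That is not automatic: if $\mu_Y(f) > 0$ and $\mu_{Y'}(f') > 0$ with $Y \neq Y'$, subadditivity gives no positive lower bound for any valuation of $f + f'$. The paper needs Lemma~\ref{lem:div_K-and-sums} ($|\div_{K,U}(f)| \cap |\div_{K,U}(f')| \subseteq |\div_{K,U}(f+f')|$), proved via the geometry of the characteristic space, precisely at this point; you do not address it, and you locate the ``main technical hurdle'' of (iii) elsewhere. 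A smaller issue of the same kind: your quasi-coherence argument via direct limits of coherent reflexive sheaves presumes noetherian-type hypotheses a Krull scheme need not satisfy; the paper instead uses the localization isomorphism $\mathcal{R}(U)_g \cong \mathcal{R}(U_g)$. The remaining parts --- generation of $K$ via effective decompositions, the affine case of (i), the case split in (ii), and the uniformizer argument for (iv) --- do match the paper's reasoning.
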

    $K$-factoriality of the rings $\mathcal{R}(U)$ is due to surjectivity of $\div_{K,U}$ which is essentially the argument from \cite{Ar2009}. The first proof of $K$-factoriality of Cox rings is due to \cite{BeHa2003}, it is valid for Cox sheaves of finite type on normal prevarieties. 
    Cox sheaves on affine Krull schemes also allow the following description.

 \begin{introcor}
  A $K$-graded sheaf $\mathcal{R}$ on an affine Krull scheme $X$ is a Cox sheaf if and only if it is the sheaf $\mathcal{R} = \t{R}$ associated to a $K$-graded $\mathcal{O}(X)$-algebra $R$ such that: 
  \begin{enumerate}
   \item $R$ is $K$-factorial (in particular, $R$ is a $K$-Krull ring) and $R^{+,*} = R_0^*$, 
   \item $R_0 = \mathcal{O}(X)$, $Q^+(R)_0 = Q(R_0)$ and $\deg_K(Q^+(R)) = K$, 
   \item the essential $K$-valuations of $R$ restrict bijectively on $Q(R_0)$ to the essential valuations of $R_0$, 
  \end{enumerate}

 \end{introcor}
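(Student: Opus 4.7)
The corollary is the affine shadow of Theorems~\ref{intro-thm:char-of-Cox-sheaves} and \ref{intro-thm:Cox-sheaf-props}: since $X$ is affine and Cox sheaves are quasi-coherent, every Cox sheaf on $X$ has the form $\widetilde{R}$ with $R=\mathcal{R}(X)$, and conversely the whole datum of $\widetilde{R}$ is determined by the $K$-graded ring $R$. The plan is therefore to translate each clause of Theorem~\ref{intro-thm:char-of-Cox-sheaves} into a ring-theoretic statement about $R$ using the formulas from Theorem~\ref{intro-thm:Cox-sheaf-props}, and conversely.

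For the ``only if'' direction, assume $\mathcal{R}$ is a Cox sheaf; quasi-coherence from Theorem~\ref{intro-thm:Cox-sheaf-props} gives $\mathcal{R}=\widetilde{R}$ for $R:=\mathcal{R}(X)$. Then (i) of the corollary combines $K$-factoriality of $\mathcal{R}(X)$ from Theorem~\ref{intro-thm:Cox-sheaf-props}(i) with the kernel clause of Theorem~\ref{intro-thm:char-of-Cox-sheaves}(iii). For (ii) the equality $R_0=\mathcal{O}(X)$ is Theorem~\ref{intro-thm:char-of-Cox-sheaves}(ii), while affineness of $X$ lets Theorem~\ref{intro-thm:Cox-sheaf-props}(i) give $\mathcal{S}(X)=Q^+(R)$, so $Q^+(R)_0=\mathcal{K}(X)=Q(R_0)$ and $\deg_K Q^+(R)=K$. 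For (iii) the $\mu_Y$ are the essential $K$-valuations of $R$ by Theorem~\ref{intro-thm:Cox-sheaf-props}(ii) and restrict on $\mathcal{K}^*$ to the essential valuations $\nu_Y$ of $\mathcal{O}(X)=R_0$ by Theorem~\ref{intro-thm:char-of-Cox-sheaves}(ii).

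For the ``if'' direction, given $R$ satisfying (i)--(iii), I verify the conditions of Theorem~\ref{intro-thm:char-of-Cox-sheaves} for $\mathcal{R}:=\widetilde{R}$. The stalk $\mathcal{R}_\xi$ is the homogeneous localisation of $R$ at the zero $K$-prime, namely $Q^+(R)$, which is $K$-simple by $K$-factoriality; together with (ii) this yields condition (i) of Theorem~\ref{intro-thm:char-of-Cox-sheaves}. The $K$-Krull property contained in (i) supplies essential $K$-valuations of $R$, which by (iii) restrict to the $\nu_Y$; assembling the corresponding discrete $K$-valuations and their valuation sheaves via the formulas of Theorem~\ref{intro-thm:Cox-sheaf-props}(ii) exhibits $\widetilde{R}$ as the $K$-Krull sheaf cut out by the $\mu_Y$, giving condition (ii) and in particular $\mathcal{R}_0=\mathcal{O}_X$. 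Finally, $K$-factoriality of $R$ decomposes every homogeneous non-zero element into a product of homogeneous primes, whence $\div_K$ is surjective and its kernel consists of $K$-units, which by (i) equal $R_0^*$.

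The main obstacle is the sheaf-theoretic bookkeeping in the converse: one has to check that the essential $K$-valuations of the ring $R$ really assemble into discrete $K$-valuation sheaves on $\widetilde{R}$ with the expected sections on every open $U\subseteq X$, and that surjectivity of $\div_{K,X}$ on global sections upgrades to surjectivity of $\div_K$ as a morphism of sheaves. This should follow from the local description in Theorem~\ref{intro-thm:Cox-sheaf-props}(ii) together with the compatibility of homogeneous localisation with passage to basic open subsets of the affine Krull scheme $X$, so that the assignment $\mathfrak{p}\mapsto \mu_Y$ matches the essential $K$-valuations of $R$ bijectively with the prime divisors $Y\subseteq X$.
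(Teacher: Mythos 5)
Your overall strategy is the intended one: the paper gives no separate proof of this corollary, stating it immediately after Theorem~\ref{intro-thm:Cox-sheaf-props} as the affine specialization of Theorems~\ref{intro-thm:char-of-Cox-sheaves} and~\ref{intro-thm:Cox-sheaf-props}, and your clause-by-clause translation (quasi-coherence giving $\mathcal{R}=\widetilde{R}$ in one direction, verification of conditions (i)--(iii) of Theorem~\ref{intro-thm:char-of-Cox-sheaves} for $\widetilde{R}$ in the other) is exactly that. The ``only if'' half is complete as you present it.

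In the ``if'' half there is one step that is misstated and genuinely needs repair. You write that the stalk $\mathcal{R}_\xi$ of $\widetilde{R}$ ``is the homogeneous localisation of $R$ at the zero $K$-prime, namely $Q^+(R)$.'' But $\widetilde{R}$ is a sheaf on the ordinary scheme $X=\Spec(R_0)$, not on $\Spec_K(R)$, so its stalk at the generic point is $(R_0\setminus 0)^{-1}R$, and this is \emph{not} $Q^+(R)$ for a general $K$-graded $R_0$-algebra: for $R=R_0[t]$ with the $\ZZ$-grading by $t$-degree one has $(R_0\setminus 0)^{-1}R=Q(R_0)[t]$, which is not $K$-simple, whereas $Q^+(R)=Q(R_0)[t,t^{-1}]$; note that this example satisfies your conditions (i) and (ii) when $R_0$ is factorial, and it is precisely condition (iii) that excludes it. The correct argument runs: by $K$-factoriality every $g\in R^+$ is a unit times a product of $K$-primes $f$; each $\bangle{f}$ is a $K$-prime divisor, so by (iii) its essential $K$-valuation restricts to some essential valuation $\nu_Y$ of $R_0$; choosing $h\in R_0$ with $\nu_Y(h)>0$ gives $h\in\bangle{f}$, so $f$ divides a nonzero element of $R_0$ and hence becomes invertible in $(R_0\setminus 0)^{-1}R$. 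This yields $(R_0\setminus 0)^{-1}R=Q^+(R)$, after which your verification of Theorem~\ref{intro-thm:char-of-Cox-sheaves}(i) goes through. The same divisibility observation is what makes your ``bookkeeping'' paragraph work: it shows that for $h\in R_0$ the localization $R_h$ discards exactly those essential $K$-valuations lying over prime divisors outside $X_h$, so that $\mathcal{R}(X_h)=\bigcap_{Y\in X_h}\mathcal{S}_{\mu_Y}(X_h)$ on basic opens and hence, by taking limits, on all opens. With these two points filled in, the proof is correct.
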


The canonical choice for a geometric realization of a quasi-coherent $K$-graded $\mathcal{O}_X$-algebra $\mathcal{F}$ is its {\em graded relative spectrum} $\Spec_{K,X}(\mathcal{F})$ which is glued from the $K$-spectra (i.e. sets of $K$-prime ideals) of $\mathcal{F}(U)$ for all affine open $U \subseteq X$. 
This object belongs to the category of graded schemes (which contains the category of schemes, i.e. $0$-graded schemes, as a full subcategory) wherein structure sheaves are graded and morphisms between affine graded schemes are comorphisms of maps of graded rings, see Section~\ref{graded-schemes}. 
From the perspective of \cite{De2013} the category of graded schemes is situated between the categories of $\mathbb{F}_1$-schemes and classical schemes within their common parent category of sesquiad schemes, see Remark~\ref{rem:graded-schemes-subcat-of-cong-schemes}. 
Graded algebraic properties of $\mathcal{F}$ naturally correspond to geometric properties of $\Spec_{K,X}(\mathcal{F})$. 
The $\Cl(X)$-graded relative spectrum of a Cox sheaf $\mathcal{R}$ on $X$ together with the canonical morphism $q : \Spec_{\Cl(X),X}(\mathcal{R}) \rightarrow X$ is called its {\em graded characteristic space}. 
Since a Cox sheaf is a $K$-Krull sheaf, its graded characteristic space is a {\em $K$-Krull scheme} which is the generalization of Krull schemes in the category of graded schemes. A $K$-Krull scheme $\rq{X}$ comes with natural notions of {\em $K$-Weil divisors} as sums of {\em $K$-prime divisors} (i.e. points with one-codimensional closure), {\em $K$-principal divisors} for all non-zero homogeneous sections of the sheaf $\mathcal{K}_K$ assigning the stalk $\mathcal{O}_{\rq{X},\rq{\xi}}$ at the generic point, and {\em $K$-class groups}. 
Our second main result is the following geometric characterization of graded characteristic spaces. 

\begin{introthm}\label{intro-thm:char-of-graded-char-spaces}
 Let $q: \rq{X} \rightarrow X$ be a morphism from a $K$-graded scheme to a scheme. 
 Then $X$ is a Krull scheme and $q$ is a graded characteristic space 
 if and only if the following hold:
 \begin{enumerate}
  \item $\rq{X}$ is a $K$-graded $K$-Krull scheme with $\deg_K(\mathcal{K}_K(\rq{X})^+) = K$, 
  \item $q$ is a good quotient and induces a commutative diagram of presheaves 
  \[
   \xy
   \xymatrix
   {
   \mathcal{K}^* \ar[r]^-{\div} \ar[d]^-{\cong}_-{q^*}  &  \WDiv \\
   (q_*\mathcal{K}_K)^*_0 \ar[r]^-{q_*\div^K} & q_*\WDiv^K \ar[u]_-{\substack{\; \\ \rq{Y} \mapsto q(\rq{Y}) }}^-{\cong}
   }
   \endxy
  \]
  \item $\Cl^K(\rq{X}) = 0$, and $\mathcal{O}(\rq{X})^{+,*} = \mathcal{O}(\rq{X})_0^*$. 
\end{enumerate}
If $\rq{X} = \Spec_{K, X}(\mathcal{R})$ with a Cox sheaf $\mathcal{R}$ then with $\div_K := \sum_Y{\mu_Y}$ the following commutative diagram extends the diagram of (ii): 
  \[
   \xy
   \xymatrix
   {
   \mathcal{S}^+ \ar@{->>}[r]^-{\div_K} \ar[d]^-{\cong}_-{q^*}  &  \WDiv \\
   q_*\mathcal{K}_K^+ \ar@{->>}[r]^-{q_*\div^K} & q_*\WDiv^K \ar[u]_-{\substack{\; \\ \rq{Y} \mapsto q(\rq{Y}) }}^-{\cong}
   }
   \endxy
  \]
  For each prime divisor $Y$ the preimage $q^{-1}(Y)$ consists of a single $K$-prime divisor $\rq{Y}$. 
  If $\rq{x} \in \rq{X}$ is the unique point contained in all closures of points mapped to $x \in X$, then $\rq{x} \in \b{\{\rq{Y}\}}$ if and only if $x \in \b{\{Y\}}$. In particular, $\mathcal{O}_{\rq{X},\rq{x}} = \mathcal{R}_x$.
\end{introthm}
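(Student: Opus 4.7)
My plan is to treat the biconditional as the translation of Theorem~\ref{intro-thm:char-of-Cox-sheaves} across the adjunction between quasi-coherent $K$-graded $\mathcal{O}_X$-algebras and graded schemes affine over $X$. In both directions the bridge is $\mathcal{R} := q_*\mathcal{O}_{\rq{X}}$, so that $\rq{X} \cong \Spec_{K,X}(\mathcal{R})$ whenever $q$ is an affine good quotient. The strategy is then to match each of the three numbered conditions on $q$ with one of the three conditions on $\mathcal{R}$, using Theorem~\ref{intro-thm:Cox-sheaf-props} to supply the dictionary on prime divisors, valuations and stalks.

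For the \emph{only if} direction, I would start with $\rq{X} = \Spec_{K,X}(\mathcal{R})$ for a Cox sheaf $\mathcal{R}$. Quasi-coherence of $\mathcal{R}$ (Theorem~\ref{intro-thm:Cox-sheaf-props}) immediately gives that $q$ is affine, hence a good quotient, and that $\rq{X}$ is $K$-Krull because $\mathcal{R}(U)$ is a $K$-Krull ring on every affine $U$. Since $\mathcal{K}_K(\rq{X}) \cong \mathcal{S}$, condition (i) reduces to $\deg_K(\mathcal{S}^+) = K$ from Theorem~\ref{intro-thm:char-of-Cox-sheaves}~(i). The isomorphism $q^*\colon \mathcal{K}^* \to (q_*\mathcal{K}_K)^*_0$ in the diagram of (ii) is exactly $\mathcal{S}_0 = \mathcal{K}$ restricted to units, and the vertical bijection $\rq{Y} \mapsto q(\rq{Y})$ is Theorem~\ref{intro-thm:Cox-sheaf-props}~(ii), which identifies the essential $K$-valuations $\mu_Y$ of $\mathcal{R}$ with the essential valuations $\nu_Y$ of $X$. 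Finally, $\Cl^K(\rq{X}) = 0$ is the surjectivity half of Theorem~\ref{intro-thm:char-of-Cox-sheaves}~(iii), and $\mathcal{O}(\rq{X})^{+,*} = \mathcal{O}(\rq{X})_0^*$ is its kernel half.

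For the \emph{if} direction I set $\mathcal{R} := q_*\mathcal{O}_{\rq{X}}$ and verify the three hypotheses of Theorem~\ref{intro-thm:char-of-Cox-sheaves} for $\mathcal{R}$. Hypothesis (i) here, together with $q$ being a good quotient, supplies $K$-simplicity of $\mathcal{S} := q_*\mathcal{K}_K$ and $\deg_K(\mathcal{S}^+) = K$, while the upper row of the diagram in hypothesis (ii) gives $\mathcal{S}_0 = \mathcal{K}$ and in particular $\mathcal{R}_0 = \mathcal{O}_X$. The $K$-Krull property of $\mathcal{R}$ with valuations $\mu_Y$ restricting to $\nu_Y$ on $\mathcal{K}^*$ is read off from the $K$-Krull property of $\rq{X}$ in (i) through the diagram's identification of $K$-prime divisors of $\rq{X}$ with prime divisors of $X$; this simultaneously forces $X$ itself to be a Krull scheme. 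The surjectivity of $\div_K = \sum_Y \mu_Y$ and the triviality of its global kernel modulo units translate precisely into $\Cl^K(\rq{X}) = 0$ and $\mathcal{O}(\rq{X})^{+,*} = \mathcal{O}(\rq{X})_0^*$ from hypothesis (iii). For the concluding assertions, the extended diagram follows by replacing units with all of $\mathcal{S}^+$ and $\mathcal{K}_K^+$ and invoking surjectivity of $\div_K$; the singleton fibre statement $q^{-1}(Y) = \{\rq{Y}\}$ is the vertical bijection of (ii); and the stalk equality $\mathcal{O}_{\rq{X},\rq{x}} = \mathcal{R}_x$ follows affine-locally from Theorem~\ref{intro-thm:Cox-sheaf-props}~(iii) combined with the standard description of stalks of $\Spec_{K,X}(\mathcal{R})$ as graded localizations.

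The main obstacle I anticipate is establishing, in the \emph{if} direction, that the pushforward $q_*\mathcal{O}_{\rq{X}}$ genuinely inherits a $K$-Krull sheaf structure on $X$ whose essential $K$-valuations restrict to those of $X$ via $\rq{Y} \mapsto q(\rq{Y})$. The delicate point is that one needs the height-one $K$-primes of $\mathcal{R}(U) = \mathcal{O}_{\rq{X}}(q^{-1}(U))$ to descend exactly to the height-one primes of $\mathcal{O}_X(U) = \mathcal{R}(U)_0$, and this correspondence is subtler than the bare surjectivity/injectivity statements on $\WDiv$; it demands the full commutativity of the diagram in (ii) together with the graded-scheme interpretation of ``good quotient'' as compatibility of $K$-homogeneous localizations with their degree-zero parts.
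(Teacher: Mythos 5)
Your overall strategy coincides with the paper's: both directions are routed through Theorem~\ref{intro-thm:char-of-Cox-sheaves} with $\mathcal{R} := q_*\mathcal{O}_{\rq{X}}$ as the bridge, and your pairing of conditions (i)--(iii) with the three hypotheses of that theorem is exactly how the paper argues. The ``delicate point'' you anticipate about descent of height-one $K$-primes is, in the \emph{if} direction, already built into hypothesis (ii) (the vertical arrow $\rq{Y}\mapsto q(\rq{Y})$ is \emph{assumed} to be an isomorphism of divisor presheaves), and in the \emph{only if} direction it is supplied by Theorem~\ref{intro-thm:Cox-sheaf-props}(ii) as you say; so the core biconditional is in order.

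The gap is in the supplements. The identity $\mathcal{O}_{\rq{X},\rq{x}} = \mathcal{R}_x$ does \emph{not} follow from ``the standard description of stalks of $\Spec_{K,X}(\mathcal{R})$ as graded localizations'' together with Theorem~\ref{intro-thm:Cox-sheaf-props}(iii) alone: the stalk $\mathcal{O}_{\rq{X},\rq{x}}$ is the graded localization of $\mathcal{R}(U)$ at the distinguished $K$-prime over the prime of $x$, which inverts all homogeneous elements outside that $K$-prime, whereas $\mathcal{R}_x = \varinjlim_{U\ni x}\mathcal{R}(U)$ inverts only degree-zero elements; equating the two is precisely the content of the assertion. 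The paper closes this by first proving the closure equivalence $\rq{x} \in \b{\{\rq{Y}\}} \Leftrightarrow x \in \b{\{Y\}}$ (via $q(\b{\{\rq{Y}\}}) = \b{\{Y\}}$, using closedness of good quotients, irreducibility and a codimension-one argument), and only then writing both stalks as intersections of the \emph{same} family of $K$-valuation rings indexed by the prime divisors through $x$. The same closure equivalence is also what upgrades the bijection on divisors to the set-theoretic claim $q^{-1}(Y) = \{\rq{Y}\}$, which your sketch attributes to the vertical bijection of (ii) alone; that bijection only says that exactly one $K$-prime divisor maps to $Y$, not that no other point of $\rq{X}$ does. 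You should add the closure argument explicitly; with it, your route to the stalk identity and the fibre statement goes through.
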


This result extends the geometric characterization of relative spectra of Cox sheaves of finite type on normal prevarieties given in \cite{ArDeHaLa}; indeed, with respect to normal prevarieties Theorem~\ref{intro-thm:char-of-graded-char-spaces} allows a translation into terms of good quotients by quasi-torus actions, see Theorem~\ref{thm:char-of-char-spaces-of-prevars}. 
In the following theorem we also generalize their characterization of Cox rings.

\begin{introthm}\label{intro-thm:char-of-Cox-rings}
 If $X$ is a Krull scheme with class group $K$ and Cox ring $R$, then the following hold:
 \begin{enumerate}
  \item $R$ is $K$-factorial, 
  \item $R^{+,*} = R^*_0$, 
  \item $\deg_K(R^+)$ generates $K$, 
  \end{enumerate}
  If $X$ has a cover by affine complements of divisors (e.g. $X$ is separated or of affine intersection), then
  \begin{enumerate}
  \item[(iv)] each localization $R_{\mathfrak{p}}$ at a $K$-prime divisor has units in every degree.  
   \end{enumerate}
   Conversely, if $K$ is finitely generated and $R$ satisfies (i) - (iv), %(i.e. $R$ is an {\em algebraic Cox ring}), 
   then there exists a Krull scheme $X$ of affine intersection with class group $K$ and Cox ring $\mathcal{R}(X) = R$.
\end{introthm}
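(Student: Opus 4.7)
The plan is to deduce the forward direction (i)--(iv) directly from the two preceding structural theorems, and to handle the converse by constructing a graded scheme from $R$, taking a good quotient, and applying Theorem~\ref{intro-thm:char-of-graded-char-spaces}. For the forward direction, items (i) and (iii) are the special case $U = X$ of Theorem~\ref{intro-thm:Cox-sheaf-props}(i), while item (ii) is the kernel condition on $\div_{K,X}$ from Theorem~\ref{intro-thm:char-of-Cox-sheaves}(iii). For (iv) one has to identify, for each $K$-prime divisor $\mathfrak{p} \subset R$, the homogeneous localization $R_\mathfrak{p}$ with the sheaf-stalk $\mathcal{R}_Y$ at the prime divisor $Y$ on $X$ corresponding to the essential $K$-valuation of $\mathfrak{p}$. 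The extra hypothesis that $X$ has a cover by affine complements of divisors enters precisely here: pick an affine $U = X \setminus |D|$ with $Y \in U$; then the restriction $R \to \mathcal{R}(U)$ is realized as a homogeneous localization (inverting the canonical section of $R$ of degree $[D]$), and further graded localization at $\mathfrak{p}$ identifies $\mathcal{R}_Y$ with $R_\mathfrak{p}$, so that Theorem~\ref{intro-thm:Cox-sheaf-props}(iv) applied to $\mathcal{R}_Y$ transports to units in every degree of $R_\mathfrak{p}$.

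For the converse, assume $K$ is finitely generated and $R$ satisfies (i)--(iv). Set $\rq{X}_0 := \Spec_K(R)$ and let $\rq{X} \subset \rq{X}_0$ denote the open subscheme consisting of those homogeneous primes $\mathfrak{q}$ for which $\deg_K((R_\mathfrak{q})^{+,*}) = K$; equivalently, $\rq{X}$ is the union of the principal graded opens $D_K(f)$ as $f$ ranges over homogeneous elements whose localization $R_f$ has units in every degree. Condition (iv) guarantees that every $K$-prime divisor of $R$ lies in $\rq{X}$, and condition (iii) together with finite generation of $K$ allows one to produce a finite cover of $\rq{X}$ by such principal opens. Since $K$ is finitely generated, the grading corresponds to an action of the diagonalizable group scheme $H = \Spec(\ZZ[K])$, and one constructs the good quotient $q : \rq{X} \to X$ by gluing the affine quotients $\Spec((R_f)_0)$ over each principal open; intersections being again principal, the resulting scheme $X$ inherits the affine intersection property.

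It remains to feed $q$ into Theorem~\ref{intro-thm:char-of-graded-char-spaces}. Condition~(i) there follows from the $K$-Krull property of $R$ (a consequence of (i)) together with $\deg_K(\mathcal{K}_K(\rq{X})^+) = K$ at the generic point, which passes from the homogeneous fraction ring $Q^+(R)$ via (iii). Condition~(iii) splits cleanly: $\Cl^K(\rq{X}) = 0$ is the geometric translation of $K$-factoriality (i), and the units identity $\mathcal{O}(\rq{X})^{+,*} = \mathcal{O}(\rq{X})_0^*$ is literally (ii). The more delicate piece is the divisor diagram in (ii), which asks that the essential $K$-valuations of $R$ restrict bijectively onto the essential valuations of $R_0$; this is where (iv) is consumed, as units in every degree at $\mathfrak{p}$ force the valuation to be determined by its restriction to $R_0 \cap R_\mathfrak{p}$ and to correspond bijectively to a prime divisor of $R_0$. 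Theorem~\ref{intro-thm:char-of-graded-char-spaces} then delivers $X$ as a Krull scheme with $q$ a graded characteristic space, whence $\mathcal{R}(X) \cong R$ and $\Cl(X) \cong K$. The principal obstacle is the careful isolation of $\rq{X}$ inside $\Spec_K(R)$ and the verification that the quotient $X$ inherits the affine intersection property; once this geometric setup is in place, the remaining hypotheses of the characterization theorem follow from (i)--(iv) by routine graded algebra.
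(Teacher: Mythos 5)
Your overall strategy coincides with the paper's: the forward direction is read off from Theorems~\ref{intro-thm:char-of-Cox-sheaves} and~\ref{intro-thm:Cox-sheaf-props} (with (iv) obtained, exactly as in the paper, by writing $R_{\mathfrak{p}}$ as a further graded localization of $\mathcal{R}(U)=R_h$ for an affine divisor complement $U=X\setminus |D|$ containing the prime divisor $Y=\div_{K,X}(f)$, where $\mathfrak{p}=\bangle{f}$ and $h$ is the canonical section with $\div_{K,X}(h)=D$, so that $h\notin\mathfrak{p}$), and the converse is proved by carving an open subset out of $\Spec_K(R)$, forming the good quotient chart by chart, and invoking Theorem~\ref{intro-thm:char-of-graded-char-spaces}. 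In that respect the proposal is on the paper's track.

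The genuine gap is in the converse, at the step where you pass from $\rq{X}=\bigcup_f{D_K(f)}$ (union over all homogeneous $f$ with $R_f$ having units in every degree) to ``a finite cover of $\rq{X}$ by such principal opens''. As defined, $\rq{X}$ is a union of possibly infinitely many principal opens and there is no reason for it to be quasi-compact, so a finite subcover need not exist; yet quasi-compactness is needed both for $\rq{X}$ to be a $K$-Krull scheme and for the quotient $X$ to be a Krull scheme. What is actually required --- and what constitutes the one real piece of work in the paper's converse --- is a finite family of pairwise non-associated $K$-primes $f_1,\dots,f_r$ with $\bangle{\deg_K(f_j);\, j\neq k}=K$ for every $k$. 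Then the $r$ charts $\Spec_K(R_j)$, where $R_j$ inverts $\prod_{k\neq j}{f_k}$, each have units in every degree, together contain every $K$-prime divisor of $\b{X}=\Spec_K(R)$ (chart $j$ omits only the divisors of the $f_k$ with $k\neq j$), and all pairwise intersections equal the single principal open defined by $f_1\cdots f_r$, which makes the gluing of the $\Spec((R_j)_0)$ and the affine-intersection property of $X$ immediate. Constructing this family is an induction using (iii), (iv) and finite generation of $K$: one starts with $K$-primes whose degrees generate $K$ and uses that $R_{\bangle{f_1}}$ has units in every degree to produce further $K$-primes, not associated to $f_1$, whose degrees still generate $K$, and iterates. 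Your proposal asserts the existence of the finite cover but does not construct it, and this is precisely where hypothesis (iv) is consumed in the converse. A secondary omission: before condition (iii) of Theorem~\ref{intro-thm:char-of-graded-char-spaces} ``is literally (ii)'' and before one can conclude $\mathcal{R}(X)=R$, one must check $\mathcal{O}(\rq{X})=R$, which holds because $\rq{X}$ contains every $K$-prime divisor of $\b{X}$ and $R$ is the intersection of the corresponding $K$-valuation rings.
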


Here, property (iv) implies property (iii). 
The additional assumption is needed in order to translate the fact that the stalks $\mathcal{R}_Y$ have units in every degree into a property of the global ring $\mathcal{R}(X)$. 
In the case that $R$ is finitely generated over an algebraically closed base field $\KK$, property (iv) translates into freeness of the action of $H = \Spec_{{\rm max}}(\KK[K])$ on a big open subset of $\Spec_{{\rm max}}(R)$ - see Remark~\ref{rem:inv-stalks-and-free-actions}, which is the property featured in the characterization of finitely generated Cox rings of normal prevarieties of affine intersection given by \cite{ArDeHaLa}. 
Our characterization of Cox rings of Krull schemes with cover by affine divisor complements and finitely generated class groups by conditions (i), (ii) and (iv) is irredundant, see Remark~\ref{rem:irredundancy}
Together with normality of Cox rings of normal prevarieties over $\KK$ \cite[Thm. I.5.1.1]{ArDeHaLa} we obtain:
\begin{introcor}
 Let $K$ be a finitely generated abelian group, $\KK$ an algebraically closed field and $R$ a $K$-graded affine $\KK$-algebra satisfying the above properties (i)- (iv). Then $R$ is normal. 
\end{introcor}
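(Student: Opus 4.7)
The plan is to deduce the corollary from Theorem~\ref{intro-thm:char-of-Cox-rings} together with the normality result \cite[Thm. I.5.1.1]{ArDeHaLa}. Given the hypotheses, the second half of Theorem~\ref{intro-thm:char-of-Cox-rings} directly produces a Krull scheme $X$ of affine intersection with $\Cl(X) = K$ and $\mathcal{R}(X) = R$. The remaining task is to upgrade this abstract Krull scheme to a normal prevariety over $\KK$ so that the cited normality statement applies.

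First I would recall that the construction yielding $X$ factors through the graded characteristic space $\rq{X} = \Spec_K(R)$: the scheme $X$ arises as a good quotient of the underlying classical scheme of $\rq{X}$ by the action of the quasi-torus $H = \Spec(\KK[K])$ encoding the $K$-grading. Since $R$ is a finitely generated $\KK$-algebra, $\Spec(R)$ is of finite type over $\KK$; as $K$ is finitely generated and $\KK$ algebraically closed, $H$ is a diagonalizable (hence linearly reductive) linear algebraic group, so the good quotient $X$ is of finite type over $\KK$ as well.

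Next I would verify integrality and normality of $X$. The $K$-factoriality of $R$ forces $R^+$ to be an integral monoid, and together with $\deg_K(R^+) = K$ (which follows from (iii) and (iv)) this makes $R$ itself an integral domain; thus $\rq{X}$ is integral and its $H$-quotient $X$ is an irreducible reduced $\KK$-scheme, i.e.\ a prevariety in the sense of \cite{ArDeHaLa}. Normality of $X$ comes for free from the Krull property, since on a Krull scheme every local ring is an intersection of DVRs and hence integrally closed.

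Having identified $X$ as a normal prevariety over $\KK$ with Cox ring $R$, I would invoke \cite[Thm. I.5.1.1]{ArDeHaLa} to conclude that $R$ is normal. The main obstacle in this program is the middle step: one must keep careful track through the construction of Theorem~\ref{intro-thm:char-of-Cox-rings} of how the affine $\KK$-algebra hypothesis is transmitted to the resulting Krull scheme, in particular ensuring that one really lands in the setting (integral, finite type, normal, separated enough to be a prevariety) where \cite[Thm. I.5.1.1]{ArDeHaLa} is available; once that bookkeeping is done, the conclusion is immediate.
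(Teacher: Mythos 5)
Your overall strategy is exactly the one the paper intends (the corollary is stated with no separate proof, only the remark that it follows from Theorem~\ref{intro-thm:char-of-Cox-rings} ``together with normality of Cox rings of normal prevarieties over $\KK$''): run the converse direction of Theorem~\ref{intro-thm:char-of-Cox-rings} to get a Krull scheme $X$ of affine intersection with $\mathcal{R}(X)=R$, check that in the affine-$\KK$-algebra situation $X$ is in fact a normal prevariety over $\KK$, and then quote \cite[Thm. I.5.1.1]{ArDeHaLa}. The finite-type step via linear reductivity of the diagonalizable group $H$ is fine.

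There is, however, one step that would fail as written: the claim that $K$-factoriality of $R$ (hence $K$-integrality of $R^+$) together with $\deg_K(R^+)=K$ forces $R$ to be an integral domain. This implication is false when $K$ has torsion: $\KK[\ZZ/n]\cong\KK[x]/(x^n-1)$ is $K$-simple, hence $K$-integral with every degree realized, yet is not a domain. The paper is explicit about this point — in the remark following Proposition~\ref{prop:divisorial-O_X-algebras} it notes that integrality of Cox rings is only known through the geometric argument for normal prevarieties and that ``neither proof seems to be applicable in the more general setting of Krull schemes.'' Integrality of $R$ is part of what you are importing from \cite[Thm. I.5.1.1]{ArDeHaLa}, not something you may assume on the way to applying it. The good news is that you do not need it where you use it: irreducibility of $X$ does not require $R$ to be a domain. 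In the construction of Theorem~\ref{intro-thm:char-of-Cox-rings}, $X$ is glued from the charts $\Spec((R_j)_0)$, and each $(R_j)_0$ is a Krull ring in the paper's sense, i.e.\ an intersection of DVRs inside a field, hence a domain; since all charts contain the common nonempty principal open subset $\Spec((R_{f_1\cdots f_r})_0)$, the scheme $X$ is integral. Replacing your domain argument for $R$ by this observation about the degree-zero charts closes the gap, and the rest of your argument then goes through.
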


The paper is organized as follows: Section~\ref{K-Krull-rings} lays the algebraic foundations for the later parts, introducing $K$-Krull rings and providing the key preparation for the calculation of the essential $K$-valuations of a Cox sheaf $\mathcal{R}$ and hence the $K$-prime divisors of $\Spec_{K,X}(\mathcal{R})$ in Theorem~\ref{thm:alg-divisorial-algebras}. 
In Section~\ref{divisorial-O_X-algebras} we introduce $K$-Krull sheaves and discuss the example of divisorial $\mathcal{O}_X$-algebras $\mathcal{O}_X(L)$ associated to subgroups $L \leq \WDiv(X)$. 
In Section~\ref{characterization-of-Cox-sheaves} we give background on the definition of Cox sheaves and prove Theorem~\ref{intro-thm:char-of-Cox-sheaves}. 
Section~\ref{graded-schemes} offers a first general introduction to graded schemes and good quotients thereof as well as $K$-Krull schemes and their $K$-Weil divisors and class groups. The example of graded spectra of monoid algebras (Example~\ref{ex:graded-spec-of-monoid-algebra}) relates graded schemes to $\mathbb{F}_1$-schemes, i.e. schemes over the field with one element as treated in \cite{De2008}. We also indicate how graded schemes fit into the more general framework of sesquiad schemes from \cite{De2013}, see Remark~\ref{rem:graded-schemes-subcat-of-cong-schemes}. 
In Section~\ref{graded-char-spaces} we prove Theorems~\ref{intro-thm:char-of-graded-char-spaces} and~\ref{intro-thm:char-of-Cox-rings} using Theorem~\ref{intro-thm:char-of-Cox-sheaves}. 
In Section~\ref{graded-schemes-and-diagonalizable-actions} we point out some aspects of the connection between graded schemes of finite type and diagonalizable actions on prevarieties, in particular, we reformulate Theorem~\ref{intro-thm:char-of-graded-char-spaces} in this more familiar setting. This requires the concept of invariant structure sheaves whose stalks naturally encode generic isotropy groups of the action, see Remark~\ref{rem:inv-stalks-and-free-actions}. Furthermore, we provide details on the connection between orbit closures, graded schemes and combinatorics (Remark~\ref{remark:orbit-closures}), and go on to show that the toric graded scheme corresponding to a toric variety is canonically identified with the defining polyhedral fan (Remark~\ref{rem:toric-graded-schemes}). 

The author is grateful for helpful discussions with J\"{u}rgen Hausen. 

\section{$K$-Krull rings}\label{K-Krull-rings}

We start by recalling some generalities and notations from graded Algebra. All rings are taken to be commutative with unit. All abelian groups used to grade rings are written additively. A $K$-graded ring is a ring with a decomposition $R = \bigoplus_{w \in K}{R_w}$ into abelian groups such that $R_w R_{w'} \subseteq R_{w+w'}$. The sets of $K$-homogeneous elements with and without zero, and the group of $K$-homogeneous units are denoted $R^{+,0}$, $R^+$ and $R^{+,*}$, respectively. 
A morphism of graded rings is a map $\phi : R \rightarrow R'$ with accompanying group homomorphism $\psi : K \rightarrow K'$ such that $\phi$ restricts to group homomorphisms $R_w \rightarrow R'_{\psi(w)}$. A morphism of graded rings is called {\em degree-preserving} if the accompanying map is the identity. For any fixed $K$ the category of graded rings has a subcategory of $K$-graded rings with degree-preserving morphisms, and this subcategory has direct and inverse limits. 
A $K$-graded module $M$ over a $K$-graded ring $R$ is a $R$-module with a decomposition $M = \bigoplus_{w \in K}{M_w}$ into abelian groups such that $R_w M_{w'} \subseteq M_{w+w'}$, where the elements of $\bigcup_{w \in K}{M_w}$ are called homogeneous elements. A {\em $K$-graded submodule} of $M$ is a submodule of the form $N = \bigoplus_{w \in K}{N \cap M_w}$, i.e. a submodule generated by homogeneous elements.

\begin{remark}
 If $B \rightarrow R$ is a morphism of graded rings, then $R$ is also called a {\em graded algebra} over $B$. The graded algebras over $B$ form a category with the obvious morphisms. This category has coproducts: If $\phi_R : B \rightarrow R$ and $\phi_S : B \rightarrow S$ are morphisms accompanied by $\psi_L : K \rightarrow L$ and $\psi_M : K \rightarrow M$ then $R \otimes_B S$ is naturally graded by $L \times M / \im(\psi_L \times -\psi_M)$ and the canonical maps $R \rightarrow R\otimes_B S$ and $S \rightarrow R \otimes_B S$ are morphisms of graded algebras over $B$. This statement is used to define fiber products of graded schemes. 
\end{remark}

Classical algebraic properties of rings and their elements and ideals have graded analoga which are obtained by restricting the defining axioms to homogeneous elements resp. graded ideals. In particular, there are natural concepts of graded divisibility theory, i.e. {\em $K$-integrality}, {\em $K$-prime} and {\em $K$-irreducible} elements, {\em $K$-factoriality}, as well as {\em $K$-prime} and {\em $K$-maximal} ideals, {\em $K$-locality}, {\em $K$-noetherianity} etc. Several authors have studied such properties: \cite{Ka1995} treats $K$-prime ideals and invariants of graded modules over $K$-noetherian rings. Graded divisibility theory was introduced and interpreted geometrically by \cite{Ha2008,Ar2009} who showed that $K$-factoriality is a natural property of Cox rings of normal prevarieties. Graded integral closures and their behaviour under coarsening have been studied in \cite{Ro2013}. 
The localization of $R$ by a $K$-prime ideal $\mathfrak{p}$ is denoted $R_{\mathfrak{p}}:= (R^+ \setminus \mathfrak{p})^{-1}R$. 
By localizing a $K$-integral ring $R$ by $R^+$ we obtain $Q^+(R)$, the {\em $K$-homogeneous fraction ring} in which every homogeneous element is invertible, making it {\em $K$-simple}. 
In general, a $K$-graded ring $R$ is $K$-simple if and only if $R_0$ is a field and $\deg_K(R^+)$ is a group, and in that case $r R_0 = R_{\deg_K(r)}$ holds for every $r \in R^+$. 
A $K$-integral ring $R$ is {\em $K$-normal}, if each homogeneous fraction that is integral over $R$ (i.e. over $R^{+,0}$) is an element of $R$.

In the following major part of this section we take a slightly more detailed look 
at $K$-Krull rings, 
the graded equivalent of Krull rings. Proofs of their basic properties may be obtained from the proofs of the respective properties of Krull rings (found e.g. in \cite{Fo1973, LaMCa1971}) by restricting the arguments to homogeneous elements resp. graded ideals. In Example~\ref{ex:A(K,phi)} we treat a canonical class of $K$-Krull rings which form the algebraic analogon of divisorial $\mathcal{O}_X$-algebras $\mathcal{O}_X(L)$ of subgrops of Weil divisors. Theorem~\ref{thm:alg-divisorial-algebras} gives details on their $K$-divisors and essential $K$-valuations and thus provides the key for the calculation of the essential $K$-valuations of Cox sheaves in Section~\ref{characterization-of-Cox-sheaves} and the $K$-Weil divisors of their graded characteristic spaces in Section~\ref{graded-char-spaces}.

\begin{definition}
Let $S$ be a $K$-simple ring. 
A {\em discrete $K$-valuation} on $S$ is a group epimorphism $\nu : S^+ \rightarrow \ZZ$ with $\nu(a + b) \geq \min\{\nu(a), \nu(b)\}$ for all $w \in K$, $a, b \in S_w\setminus 0$ with $a + b\neq 0$. 
Its {\em discrete $K$-valuation ring} is the subring $R_{\nu} \subseteq S$ generated by the preimage of $\ZZ_{\geq 0}$ under $\nu$. 

A {\em $K$-Krull ring} is an intersection $R$ of discrete $K$-valuation rings $R_{\nu_j} \subseteq S$, $j \in J$ such that for each $a \in R^+$  only finitely many $\nu_j(a)$ are non-zero.
\end{definition}

Let $R$ be a $K$-integral ring. 
For $K$-graded $R$-submodules $\mathfrak{a}, \mathfrak{b}$ of $Q^+(R)$, product $\mathfrak{a}\mathfrak{b}$ and quotient $[\mathfrak{a} : \mathfrak{b}] = \{f \in Q^+(R); f \mathfrak{b} \subseteq \mathfrak{a}\}$ are again $K$-graded. 
A {\em $K$-fractional ideal} is a $K$-graded proper $R$-submodule $\mathfrak{a} \leq_R Q^+(R)$ 
with $[R : \mathfrak{a}] \neq 0$.

\begin{construction}
Let $R$ be a $K$-integral ring. A $K$-fractional ideal $\mathfrak{a}$ is called a {\em $K$-divisor} of $R$ if $\mathfrak{a} = [R : [R:\mathfrak{a}]]$. The set $Div^K(R)$ of $K$-divisors of $R$ equipped with the operation sending $\mathfrak{a}$ and $\mathfrak{b}$ to
 \[
  \mathfrak{a} \boldmath{\hm{+}} \mathfrak{b} := [R : [R: \mathfrak{a}\mathfrak{b}]]  
 \]
 and the partial order defined by $\mathfrak{a} \leq \mathfrak{b} :\Leftrightarrow \mathfrak{a} \supseteq \mathfrak{b}$ is a partially ordered semi group with neutral element $R$ in which each two elements have infimum and supremum. 
There is a canonical homomorphism 
\[
 \div^K: Q^+(R)^+ \rightarrow Div^K(R), \; f \mapsto  R f
\]
with kernel $R^{+,*}$ whose image $PDiv^K(R)$ is called the group of {\em $K$-principal divisors}. The cokernel $\Cl^K(R)$ is called the {\em $K$-class semi-group} of $R$. 
\end{construction}

The semi-group of $K$-divisors characterizes $R$ as follows:

\begin{theorem}
Let $R$ a $K$-integral ring. 
\begin{enumerate}
 \item $R$ is a $K$-Krull ring if and only if $Div^K(R)$ is a group and every non-empty set of positive elements in $Div^K(R)$ has a minimal element. 
 \item $R$ is $K$-factorial if and only if $R$ is a $K$-Krull ring with $\Cl^K(R)=0$. 
\end{enumerate}

\end{theorem}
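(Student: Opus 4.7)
The plan is to mirror the classical characterization of Krull domains (cf.\ Fossum; Larsen--McCarthy), systematically restricting every ideal, element, and divisorial operation to the $K$-graded setting. Since the text explicitly states that such proofs are obtained from the classical ones by this restriction, I will sketch the main lines rather than reproduce details.

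For the forward direction of (i), start from $R = \bigcap_{j \in J} R_{\nu_j}$ inside $S = Q^+(R)$. Each discrete $K$-valuation ring $R_{\nu_j}$ is itself a $K$-Krull ring whose divisor group is $\ZZ$, generated by its unique minimal positive $K$-prime $\mathfrak{m}_j$. Setting $\mathfrak{p}_j := \mathfrak{m}_j \cap R$, one first verifies the formula
\[
[R : \mathfrak{a}] \;=\; \bigcap_{j \in J} [R_{\nu_j} : \mathfrak{a} R_{\nu_j}]
\]
for every $K$-fractional ideal $\mathfrak{a}$. Combining this with the finiteness condition built into the definition of $K$-Krull ring yields that every $K$-divisor of $R$ has a unique expression as a finite formal sum $\sum_j n_j \mathfrak{p}_j$, giving an order isomorphism $Div^K(R) \cong \bigoplus_{j \in J} \ZZ$; the minimum condition on the positive cone is then obvious.

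For the reverse direction of (i), assume $Div^K(R)$ is a group satisfying the minimum condition. An induction using the minimum property shows the minimal positive elements $\{\mathfrak{p}_i\}_{i \in I}$ freely generate $Div^K(R)$. For each $i$ define $\nu_i \colon Q^+(R)^+ \to \ZZ$ by reading off the coefficient of $\mathfrak{p}_i$ in $\div^K(f)$. Additivity is immediate from $\div^K$ being a homomorphism; the triangle inequality follows from the semigroup inequality $\div^K(a+b) \geq \inf\{\div^K(a), \div^K(b)\}$ in $Div^K(R)$ whenever $a,b,a+b$ are homogeneous of common degree. One then checks $R = \bigcap_i R_{\nu_i}$: a homogeneous $f$ lies in $R$ iff $Rf \subseteq R$, iff $\div^K(f) \geq 0$, iff every $\nu_i(f) \geq 0$; the finite-support condition on the $\nu_i(f)$ is automatic from the finite support of $\div^K(f)$. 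Part (ii) then reduces to the observation that $K$-factoriality is equivalent to the existence of factorizations of non-zero homogeneous elements into $K$-irreducibles that generate $K$-prime ideals; within a $K$-Krull ring this translates precisely into every $K$-prime divisor $\mathfrak{p}_i$ being $K$-principal, i.e.\ $Div^K(R) = PDiv^K(R)$, i.e.\ $\Cl^K(R) = 0$.

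The main obstacle will be the reverse direction of (i): verifying carefully that the coefficient maps $\nu_i$ are genuine discrete $K$-valuations (surjectivity follows from minimality of $\mathfrak{p}_i$, but the triangle inequality on homogeneous sums of the same degree must be derived from the closure operator $[R:[R:-]]$ underlying the divisor addition), and that $R$ itself --- not a larger subring of $S$ --- is recovered as $\bigcap_i R_{\nu_i}$. The latter uses that $R = \bigoplus_w R_w$, so $R$ is determined by its homogeneous part $R^{+,0}$, and $R^{+,0}$ coincides with the preimage under $\div^K$ of the non-negative cone of $Div^K(R)$.
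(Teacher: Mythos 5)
Your proposal is correct and follows exactly the route the paper itself prescribes: the paper gives no independent proof of this theorem but states that all basic properties of $K$-Krull rings are obtained by restricting the classical arguments for Krull rings (Fossum, Larsen--McCarthy) to homogeneous elements and graded ideals, which is precisely the adaptation you sketch, including the key graded-specific point that $R=\bigoplus_w R_w$ is determined by $R^{+,0}$. The only minor imprecision is that for an arbitrary (not necessarily essential) defining family $\{\nu_j\}$ one gets a priori only an order embedding $Div^K(R)\hookrightarrow\bigoplus_j\ZZ$ (as in the paper's Remark~\ref{rem:val-of-divisors}) rather than an isomorphism, but this does not affect the argument.
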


\begin{remark}\label{rem:val-of-divisors}
Let $\{\nu_j\}_{j\in J}$ be a defining family of the $K$-Krull ring $R$. 
For a $K$-fractional ideal $\mathfrak{a}$ and $j \in J$ we set 
\[
 \nu(\mathfrak{a}_j) := \max\{-\nu_j(f);\; f \in [R : \mathfrak{a}] \cap Q^+(R)^+\} \in \ZZ .
\]
This notion is well-defined and satisfies $\nu_j(\mathfrak{a}) = \nu([R: [R: \mathfrak{a}]])$. 
Furthermore, there is a monomorphism of ordered groups
\[
 Div^K(R) \longrightarrow \bigoplus_{j \in J}{\ZZ} , \quad \mathfrak{a} \longmapsto \{\nu_j(\mathfrak{a})\}_{j \in J} .
\] 
\end{remark}

\begin{proposition}
 Let $R$ be a $K$-Krull ring. Then the following hold:
 \begin{enumerate}
  \item The minimal positive $K$-divisors are those that are $K$-prime as ideals in $R$, and these are the minimal non-zero $K$-prime ideals of $R$; they are called the {\em $K$-prime divisors} of $R$ and form a $\ZZ$-basis of $Div^K(R)$, 
  \item For each $K$-prime divisor $\mathfrak{p}$ the map $\nu_{\mathfrak{p}}$ assigning to $a \in Q^+(R)$ the coefficient with which $\mathfrak{p}$ occurs in $\div^K(a)$ is a $K$-valuation on $Q^+(R)$. Its $K$-valuation ring is $R_{\mathfrak{p}}$ and we have $\mathfrak{p} \cap R^+ = \nu_{\mathfrak{p}}^{-1}(\ZZ_{>0}) \cap R^+$. The coefficient of a $K$-divisor $\mathfrak{a}$ at $\mathfrak{p}$ is  
  $$\nu_{\mathfrak{p}}(\mathfrak{a}) = \min\{\nu_{\mathfrak{p}}(a);\; a \in \mathfrak{a} \cap Q^+(R)^+\} .$$ 
  The family $\{\nu_{\mathfrak{p}}\}_{\mathfrak{p}}$ is minimal among all families defining $R$ in $Q^+(R)$, it is called the family of {\em essential $K$-valuations} of $R$. 
 \end{enumerate}

\end{proposition}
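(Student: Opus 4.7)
The plan is to adapt the classical proof for Krull rings to the $K$-graded setting by systematically restricting to $K$-homogeneous elements and $K$-graded ideals.

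For (i), I combine the preceding characterization theorem, which states that $Div^K(R)$ is a group satisfying the minimum condition on positive elements, with Remark~\ref{rem:val-of-divisors}, which embeds it as an ordered subgroup of $\bigoplus_{j \in J} \ZZ$. A standard lattice argument then forces $Div^K(R)$ to be free abelian with basis the minimal positive $K$-divisors, and every positive $K$-divisor decomposes uniquely as a finite non-negative combination. To identify this basis with the minimal nonzero $K$-prime ideals, I argue in both directions. If $\mathfrak{p}$ is a minimal positive $K$-divisor and $a, b \in R^+$ with $ab \in \mathfrak{p}$, then $\div^K(a) + \div^K(b) = \div^K(ab) \geq \mathfrak{p}$, and uniqueness of the basis decomposition places $\mathfrak{p}$ in either $\div^K(a)$ or $\div^K(b)$, giving $K$-primality; minimality as a nonzero $K$-prime follows because any smaller one would itself be a strictly smaller positive $K$-divisor. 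Conversely, a minimal nonzero $K$-prime $\mathfrak{q}$ contains a nonzero homogeneous $a$; decomposing $\div^K(a)$ in the basis, at least one component $\mathfrak{p}_i$ is contained in $\mathfrak{q}$ by $K$-primality, and minimality of $\mathfrak{q}$ forces $\mathfrak{p}_i = \mathfrak{q}$.

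For (ii), define $\nu_{\mathfrak{p}}(a)$ as the coefficient of $\mathfrak{p}$ in $\div^K(a)$. The homomorphism property transfers from $\div^K$, and the valuation inequality follows from Remark~\ref{rem:val-of-divisors} applied to $\div^K(a+b)$, since on the basis $\div^K(a+b)$ dominates the componentwise infimum of $\div^K(a)$ and $\div^K(b)$. The identification $R_{\nu_{\mathfrak{p}}} = R_{\mathfrak{p}}$ comes from writing $f \in Q^+(R)^+$ with $\nu_{\mathfrak{p}}(f) \geq 0$ as $f = a/b$ with $a, b \in R^+$ and showing that one may take $b \notin \mathfrak{p}$, while $\mathfrak{p} \cap R^+ = \nu_{\mathfrak{p}}^{-1}(\ZZ_{>0}) \cap R^+$ is immediate from the observation that $\nu_\mathfrak{p}(a) > 0$ iff $\mathfrak{p}$ appears in $\div^K(a) = Ra$. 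The infimum formula for $\nu_\mathfrak{p}(\mathfrak{a})$ then falls out of Remark~\ref{rem:val-of-divisors}, using that $K$-fractional ideals are generated by their $K$-homogeneous elements.

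The genuine difficulty is the minimality of $\{\nu_\mathfrak{p}\}_\mathfrak{p}$: removing a single $\nu_{\mathfrak{p}_0}$, I must produce a homogeneous $f \in Q^+(R)$ lying in $R_{\nu_\mathfrak{p}}$ for all $\mathfrak{p} \neq \mathfrak{p}_0$ but not in $R_{\nu_{\mathfrak{p}_0}}$; equivalently, a principal $K$-divisor whose only negative component sits at $\mathfrak{p}_0$. The plan is to construct such an $f$ by graded approximation: start with $a \in \mathfrak{p}_0 \cap R^+$, whose divisor involves $\mathfrak{p}_0$ together with finitely many further primes $\mathfrak{p}_1, \dots, \mathfrak{p}_n$, and then use graded prime avoidance inside $\mathfrak{p}_0$ to find homogeneous elements $b_i \in \mathfrak{p}_0 \setminus \mathfrak{p}_i$, combined multiplicatively to cancel the unwanted positive contributions at the $\mathfrak{p}_i$ while driving the coefficient at $\mathfrak{p}_0$ further negative. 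This graded avoidance step, together with the finiteness of the support of $\div^K$ on any $a \in R^+$, is the substantive technical point; once in hand, the remaining verifications are faithful transcriptions of the ungraded Krull-ring arguments.
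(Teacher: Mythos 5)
The paper itself offers no proof of this proposition: it appears among the basic facts whose ``proofs may be obtained from the proofs of the respective properties of Krull rings \dots by restricting the arguments to homogeneous elements resp.\ graded ideals,'' so your overall strategy is exactly the intended one. Part (i) and most of part (ii) of your sketch are faithful graded transcriptions of the classical arguments, up to one small imprecision in (i): a non-zero $K$-prime $\mathfrak{q}\subsetneq\mathfrak{p}$ need not itself be divisorial, so it is not literally ``a strictly smaller positive $K$-divisor''; rather, it contains some minimal positive $K$-divisor $\mathfrak{p}'$, and then $\mathfrak{p}'\subseteq\mathfrak{q}\subsetneq\mathfrak{p}$ contradicts minimality because $\mathfrak{p}'\geq\mathfrak{p}>0$ forces $\mathfrak{p}'=\mathfrak{p}$.

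The step you yourself single out as the substantive one --- minimality of the family $\{\nu_{\mathfrak{p}}\}_{\mathfrak{p}}$ --- is where your construction breaks. You need a homogeneous $f$ with $\nu_{\mathfrak{p}_0}(f)<0$ and $\nu_{\mathfrak{q}}(f)\geq 0$ for all $\mathfrak{q}\neq\mathfrak{p}_0$. An element $b_i\in\mathfrak{p}_0\setminus\mathfrak{p}_i$ satisfies $\nu_{\mathfrak{p}_0}(b_i)\geq 1$ and $\nu_{\mathfrak{p}_i}(b_i)=0$, so multiplying $a$ (or $1/a$) by such elements can neither cancel the contribution at $\mathfrak{p}_i$, where $b_i$ is valuated by zero, nor push the coefficient at $\mathfrak{p}_0$ downward, where $b_i$ is valuated positively: the containments are exactly backwards. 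The repair: writing $\div^K(a)=n_0\mathfrak{p}_0+\sum_{i\geq 1} n_i\mathfrak{p}_i$, choose each $b_i$ homogeneous in the divisorial ideal representing $n_i\mathfrak{p}_i$ but outside $\mathfrak{p}_0$ (possible since that ideal is graded and not contained in $\mathfrak{p}_0$, its coefficient at $\mathfrak{p}_0$ being zero), and set $f=\prod_i b_i/a$. Cleaner still, and closer to the classical source: $[R:\mathfrak{p}_0]$ is a graded $R$-module strictly containing $R$ (else $\mathfrak{p}_0=[R:[R:\mathfrak{p}_0]]=R$), hence contains a homogeneous $f\notin R$; from $f\mathfrak{p}_0\subseteq R$ one gets $\nu_{\mathfrak{q}}(f)\geq-\nu_{\mathfrak{q}}(\mathfrak{p}_0)=-\delta_{\mathfrak{q},\mathfrak{p}_0}$ for all $\mathfrak{q}$, and $f\notin R$ then forces $\nu_{\mathfrak{p}_0}(f)=-1$ while $\nu_{\mathfrak{q}}(f)\geq 0$ otherwise. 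With that replacement your argument goes through; note also that the surjectivity of $\nu_{\mathfrak{p}}$ onto $\ZZ$, required by the paper's definition of a discrete $K$-valuation, should be recorded explicitly (it follows from your min-formula applied to $\mathfrak{a}=\mathfrak{p}$, which produces a homogeneous element of value exactly $1$).
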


\begin{remark}
 Assertion (i) is in particular an existence statement: A $K$-Krull ring has $K$-prime divisors if and only if it is not $K$-simple. A general $K$-integral ring need not have $K$-prime divisors (i.e. minimal non-zero $K$-prime ideals), even if it is not $K$-simple. 
\end{remark}

\begin{proposition}
 A $K$-noetherian $K$-integral ring is a $K$-Krull ring if and only if it is $K$-normal. 
\end{proposition}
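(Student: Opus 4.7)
The plan is to adapt the classical proof (that a noetherian integral domain is Krull if and only if integrally closed) to the graded setting. The forward direction is essentially formal; the backward direction runs through the Mori--Nagata argument with every ring-theoretic notion replaced by its $K$-graded counterpart.

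For the forward direction, a discrete $K$-valuation ring $R_\nu$ is $K$-normal: if $f \in Q^+(R_\nu)^+$ satisfies a $K$-homogeneous monic equation $f^n + c_{n-1} f^{n-1} + \cdots + c_0 = 0$ with $c_i \in R_\nu^{+,0}$, then $\nu(f) < 0$ is impossible, since $\nu(f^n) = n \nu(f)$ would be strictly smaller than every $\nu(c_i f^i) \geq i \nu(f)$ for $i < n$. Intersections of $K$-normal subrings inside a common $K$-simple ring are trivially $K$-normal, so every $K$-Krull ring is $K$-normal.

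For the converse, let $R$ be $K$-noetherian, $K$-integral and $K$-normal, and denote by $\mathcal{P}$ the set of minimal non-zero $K$-prime ideals of $R$. I claim that $R$ is $K$-Krull with defining family $\{\nu_{\mathfrak{p}}\}_{\mathfrak{p} \in \mathcal{P}}$, where $\nu_{\mathfrak{p}}$ is the valuation attached to the graded localization $R_{\mathfrak{p}}$. Three verifications are required. \emph{(a) Each $R_{\mathfrak{p}}$ is a discrete $K$-valuation ring.} The ring $R_{\mathfrak{p}}$ is $K$-noetherian, $K$-integral, $K$-normal and $K$-local with a unique non-zero $K$-prime $\mathfrak{p}_{\mathfrak{p}}$. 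Pick $0 \neq b \in \mathfrak{p}_{\mathfrak{p}}^+$; then $\mathfrak{p}_{\mathfrak{p}} = \sqrt{(b)}$ gives $\mathfrak{p}_{\mathfrak{p}}^n \subseteq (b)$ for some minimal $n \geq 1$, and a homogeneous $c \in \mathfrak{p}_{\mathfrak{p}}^{n-1} \setminus (b)$ yields $y := c/b \in Q^+(R_{\mathfrak{p}}) \setminus R_{\mathfrak{p}}$ with $y \mathfrak{p}_{\mathfrak{p}} \subseteq R_{\mathfrak{p}}$. Either $y \mathfrak{p}_{\mathfrak{p}} \subseteq \mathfrak{p}_{\mathfrak{p}}$, in which case the determinant trick applied to a finite homogeneous generating set of $\mathfrak{p}_{\mathfrak{p}}$ (which exists by $K$-noetherianity) makes $y$ integral over $R_{\mathfrak{p}}$, hence in $R_{\mathfrak{p}}$ by $K$-normality, a contradiction; or $y \mathfrak{p}_{\mathfrak{p}} = R_{\mathfrak{p}}$, so $\mathfrak{p}_{\mathfrak{p}} = (y^{-1})$ is principal and $R_{\mathfrak{p}}$ is a discrete $K$-valuation ring. \emph{(b) Finite character.} For $0 \neq a \in R^+$, $K$-noetherianity together with the graded Hauptidealsatz gives that the minimal $K$-primes over $(a)$ are finite in number and all belong to $\mathcal{P}$. \emph{(c) $R = \bigcap_{\mathfrak{p} \in \mathcal{P}} R_{\mathfrak{p}}$ in $Q^+(R)$.} Supposing $x \in \bigcap_{\mathfrak{p}} R_{\mathfrak{p}} \cap Q^+(R)^+ \setminus R$, the conductor $\mathfrak{a} := \{r \in R : r x \in R\}$ is a proper $K$-graded ideal; pick an associated $K$-prime $\mathfrak{q} = (\mathfrak{a} :_R r)$ of $R/\mathfrak{a}$, with $r \notin \mathfrak{a}$ homogeneous. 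Then $y := r x \in Q^+(R) \setminus R$ and $\mathfrak{q} y \subseteq R$. Ruling out $\mathfrak{q} y \subseteq \mathfrak{q}$ by the determinant trick as in (a), we get $\mathfrak{q}_{\mathfrak{q}} y = R_{\mathfrak{q}}$, so $\mathfrak{q}_{\mathfrak{q}}$ is principal generated by $y^{-1}$; by the graded Hauptidealsatz $\mathfrak{q}$ has height $1$, i.e.\ $\mathfrak{q} \in \mathcal{P}$. But then $x \in R_{\mathfrak{q}}$ supplies a homogeneous $s \notin \mathfrak{q}$ with $s x \in R$, i.e.\ $s \in \mathfrak{a} \setminus \mathfrak{q}$, contradicting $\mathfrak{a} \subseteq \mathfrak{q}$.

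The main technical obstacle is to nail down the graded analogues of the tools used throughout: the graded Hauptidealsatz (every minimal $K$-prime above a non-zero principal $K$-ideal has height $1$), the existence and primary-decomposition behaviour of $K$-graded associated primes in $K$-noetherian rings, and the determinant trick restricted to $K$-homogeneous coefficients. As the author indicates in the paragraph preceding the theorem, all of these are obtained by restricting the classical proofs to homogeneous elements and $K$-graded ideals; once they are available, the outline above is a routine graded translation of the Mori--Nagata argument.
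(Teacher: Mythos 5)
Your argument is correct and is exactly the route the paper intends: the paper gives no explicit proof of this proposition, stating only that such facts follow by restricting the classical arguments (from \cite{Fo1973, LaMCa1971}) to homogeneous elements and graded ideals, and your write-up carries out precisely that graded translation of the standard ``noetherian $+$ integrally closed $\Leftrightarrow$ Krull'' proof. The auxiliary graded tools you flag (graded Hauptidealsatz, graded associated primes, determinant trick with homogeneous data) are indeed the only inputs needed, and they are available in the cited graded literature.
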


Next, we treat the algebraic construction that lies beneath divisorial $\mathcal{O}_X$-algebras of subgroups $L$ of Weil divisors. 

\begin{example}\label{ex:A(K,phi)}
 Let $A$ be a Krull ring with essential valuations $\{\nu_{\mathfrak{p}}\}_{\mathfrak{p}}$ and let $\phi : K \rightarrow Div(A)$ be a homomorphism of abelian groups. The group algebra $S := Q(A)[K]$ is $K$-simple and 
 \begin{align*}
 \mu_{\mathfrak{p}}: S^+ & \longrightarrow \ZZ \\
  a \chi^w & \longmapsto \nu_{\mathfrak{p}}(a) + \nu_{\mathfrak{p}}(\phi(w)) = \nu_{\mathfrak{p}}(\div(a) + \phi(w))
 \end{align*}
defines a $K$-valuation on $S$ for every prime divisor $\mathfrak{p}$. The ring $R = \bigcap_{\mathfrak{p}}{S_{\mu_{\mathfrak{p}}}}$ is a $K$-Krull ring with homogeneous components
\[
  R_w = \{a \in Q(A); \; a = 0 \text{ or } \div(a) + \phi(w) \geq 0\} \cdot \chi^w \quad \text{for }w \in K .
 \]
\end{example}

\begin{theorem}\label{thm:alg-divisorial-algebras}
 In the above notation, the ring $R$ has the following properties:
 \begin{enumerate}
  \item $R_0 = A$ and $Q^+(R)$ is canonically isomorphic to $Q(A)[K]$, 
  \item $\{\mu_{\mathfrak{p}}\}_{\mathfrak{p}}$ are the essential $K$-valuations of $R$ 
  and there is an isomorphism 
  \begin{align*}
   Div^K(R) & \longrightarrow Div(A) \\
   \alpha: \mathfrak{b} & \longmapsto  \begin{Large}\textbf{$\sum$}\end{Large}_{\mathfrak{p}}{\mu_{\mathfrak{p}}(\mathfrak{b}) \mathfrak{p}}  \\
   [R: [R: R\mathfrak{a}]]  & \longmapsfrom \mathfrak{a} : \beta
  \end{align*}
  which restricts to a bijection
    \begin{align*}
  \{K-\text{prime divisors of }R\} & \longrightarrow \{\text{prime divisors of }A\} \\
   \mathfrak{q} & \longmapsto \mathfrak{q} \cap A \\
  \bangle{\mu_{\mathfrak{p}}^{-1}(\ZZ_{> 0}) \cap R} & \longmapsfrom \mathfrak{p} 
  \end{align*}
  and induces an isomorphism $\Cl_K(R) \cong \Cl(A)/\b{\im(\phi)}$,
  \item The localization $R_{\mathfrak{q}}$ by a $K$-prime divisor $\mathfrak{q}$ of $R$ has units in every degree with $(R_{\mathfrak{q}})_0 = A_{\mathfrak{p}}$ where $\mathfrak{p} = \mathfrak{q} \cap A$, 
  \item $R$ has homogeneous components of every $K$-degree, i.e. $\deg_K(R^+) = K$.
 \end{enumerate}

\end{theorem}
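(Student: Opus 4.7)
The plan is to verify the four items in order, using the Krull approximation theorem for $A$ as the main technical tool. For (i), the identification $R_0 = A$ is immediate from the defining formula at $w = 0$: since $\phi(0) = 0$, one has $R_0 = \{a \in Q(A);\; \div(a) \geq 0\}\chi^0 = A\chi^0$ by the standard description $A = \bigcap_{\mathfrak{p}} A_{\mathfrak{p}}$. The inclusion $Q^+(R) \subseteq S$ is clear because $R \subseteq S$ is $K$-integral and $S$ is $K$-simple. For the reverse, every $a\chi^w \in S^+$ should be exhibited as $(ac\chi^0)/(c\chi^{-w})$ where $c \in A$ satisfies $\nu_{\mathfrak{p}}(c) \geq \max(0,\nu_{\mathfrak{p}}(\phi(w)),-\nu_{\mathfrak{p}}(a))$ for all $\mathfrak{p}$. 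Only finitely many of these lower bounds are positive, so such $c$ exists, e.g. as a product of powers of elements realizing positive valuation at the relevant primes.

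For (ii), the core observation is that $\mu_{\mathfrak{p}}|_A = \nu_{\mathfrak{p}}$ (since $\phi(0)=0$), giving $\mathfrak{q}_{\mathfrak{p}} \cap A = \mathfrak{p}$ for the $K$-prime ideal $\mathfrak{q}_{\mathfrak{p}} = \langle \mu_{\mathfrak{p}}^{-1}(\ZZ_{>0}) \cap R^+\rangle$. So distinct $\mathfrak{p}$ give distinct $K$-prime divisors $\mathfrak{q}_{\mathfrak{p}}$ of $R$, and since every defining family of $K$-valuations contains the essential family, $\{\mu_{\mathfrak{p}}\}_{\mathfrak{p}}$ is precisely the essential family, indexed bijectively by the prime divisors of $A$. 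The map $\alpha$ then restricts to the claimed bijection on prime generators and extends $\ZZ$-linearly to an isomorphism of free abelian groups; the inverse formula is checked on the basis and extends by linearity. Evaluating $\div^K(a\chi^w) = \sum_{\mathfrak{p}}(\nu_{\mathfrak{p}}(a)+\nu_{\mathfrak{p}}(\phi(w)))\mathfrak{q}_{\mathfrak{p}}$ and applying $\alpha$ yields $\div(a)+\phi(w)$, so $\alpha(PDiv^K(R)) = PDiv(A)+\im(\phi)$ and $\Cl_K(R) \cong \Cl(A)/\b{\im(\phi)}$.

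For (iii) and (iv), both follow from a single approximation step: given $w \in K$ and a prime divisor $\mathfrak{p}$ of $A$, find $a \in Q(A)^*$ with $\nu_{\mathfrak{p}'}(a) \geq -\nu_{\mathfrak{p}'}(\phi(w))$ for all $\mathfrak{p}'$ and equality at $\mathfrak{p}$. Then $a\chi^w \in R_w \setminus\{0\}$ with $\mu_{\mathfrak{p}}(a\chi^w) = 0$, so $a\chi^w$ is a homogeneous unit of degree $w$ in $R_{\mathfrak{q}_{\mathfrak{p}}}$; this proves (iii), and the non-vanishing of $R_w$ for every $w$ gives (iv). For $(R_{\mathfrak{q}})_0 = A_{\mathfrak{p}}$ with $\mathfrak{q} = \mathfrak{q}_{\mathfrak{p}}$, the inclusion $\supseteq$ is immediate from $A \setminus \mathfrak{p} \subseteq R^+ \setminus \mathfrak{q}$; the reverse follows by writing a degree-zero element as $(a\chi^v)/(b\chi^v) = a/b$ with $\div(a)+\phi(v) \geq 0$ and $\nu_{\mathfrak{p}}(b) = -\nu_{\mathfrak{p}}(\phi(v))$, whence $\nu_{\mathfrak{p}}(a/b) \geq 0$.

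The main obstacle is producing, in (i) and (iii), elements of $Q(A)^*$ satisfying finitely many prescribed valuation conditions simultaneously: lower bounds at all $\mathfrak{p}$, with an equality at one distinguished prime in the case of (iii). Each such existence statement is exactly the approximation principle for the essential valuations of the Krull ring $A$, and the bookkeeping needed to translate the $K$-graded conditions in $R$ back to constraints on $A$ is the most delicate point of the proof.
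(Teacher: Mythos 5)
Your handling of (i), (iii) and (iv) is essentially the paper's: (iv) and the surjectivity of $Q^+(R)\to Q(A)[K]$ both rest on the approximation theorem for the Krull ring $A$, and your computation of $(R_{\mathfrak{q}})_0=A_{\mathfrak{p}}$ is the same degree-zero bookkeeping the paper performs. (For ``units in every degree'' the paper instead observes that a uniformizer $t_0$ of $A_{\mathfrak{p}}$ generates the $K$-maximal ideal of $R_{\mathfrak{q}}$; your direct production of a degree-$w$ element with $\mu_{\mathfrak{p}}$-value $0$ is an equally valid route.)

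The genuine gap is in (ii). You assert that $\mathfrak{q}_{\mathfrak{p}}=\bangle{\mu_{\mathfrak{p}}^{-1}(\ZZ_{>0})\cap R^+}$ is a $K$-prime \emph{divisor} of $R$, i.e.\ a \emph{minimal} non-zero $K$-prime, and then conclude ``every defining family contains the essential family, hence $\{\mu_{\mathfrak{p}}\}_{\mathfrak{p}}$ is the essential family.'' That containment gives only one inclusion (essential $\subseteq\{\mu_{\mathfrak{p}}\}_{\mathfrak{p}}$); to upgrade it to equality you must show each $\mu_{\mathfrak{p}}$ is itself essential, equivalently that each $\mathfrak{q}_{\mathfrak{p}}$ really is minimal among non-zero $K$-primes with $\mu_{\mathfrak{p}}$ the associated valuation --- and this is precisely what you never prove. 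The relation $\mathfrak{q}_{\mathfrak{p}}\cap A=\mathfrak{p}$ gives distinctness, but a defining family can strictly contain the essential one while all its associated prime ideals are distinct (they just fail to be minimal), so distinctness does not rescue the argument. The paper closes exactly this hole with the preparatory Lemma $\mu_{\mathfrak{p}}(R\mathfrak{a})=\nu_{\mathfrak{p}}(\mathfrak{a})$, in particular $\mu_{\mathfrak{p}}(R\mathfrak{p}')=\delta_{\mathfrak{p},\mathfrak{p}'}$: together with Remark~\ref{rem:val-of-divisors} this yields $\alpha\circ\beta=\id$, so $\alpha$ is an isomorphism of \emph{ordered} groups $Div^K(R)\to Div(A)$ and hence carries minimal positive elements to minimal positive elements; minimality of $\beta(\mathfrak{p})=\mathfrak{q}_{\mathfrak{p}}$ and essentiality of $\mu_{\mathfrak{p}}=\nu_{\beta(\mathfrak{p})}$ then come for free. (An alternative repair: if some $\mu_{\mathfrak{p}}$ were redundant, intersecting the degree-zero parts of the remaining valuation rings would give $A=\bigcap_{\mathfrak{p}'\neq\mathfrak{p}}Q(A)_{\nu_{\mathfrak{p}'}}$, contradicting essentiality of $\nu_{\mathfrak{p}}$ for $A$.) The same Lemma is what underlies your unchecked step that $\beta$ inverts $\alpha$ ``on the basis''; your derivation of $PDiv^K(R)$ and of $\Cl_K(R)\cong\Cl(A)/\b{\im(\phi)}$ is correct once this is in place.
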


\begin{lemma}
In the above situation, let $\mathfrak{a} \subset Q(A)$ be a fractional ideal and $\mathfrak{p}$ a prime divisor of $A$. Then
 \[ 
  \mu_{\mathfrak{p}}(R \mathfrak{a}) = \nu_{\mathfrak{p}}(\mathfrak{a}) ,
 \]
 in particular, $\mu_{\mathfrak{p}}(R \mathfrak{p}') = \delta_{\mathfrak{p},\mathfrak{p}'}$ holds for any prime divisor $\mathfrak{p}'$ of $A$. 

\begin{proof}
 We calculate 
 \begin{align*}
  \mu_{\mathfrak{p}}(R \mathfrak{a}) %& = \max\{ \mu_{\mathfrak{p}}(r);\; r \in Q^+(R)^+, \; R\mathfrak{a} \subseteq R r\} \\
  & \geq \max\{ \mu_{\mathfrak{p}}(a);\; a \in Q(A)^*, \; \mathfrak{a} \subseteq A a\} %\\
  %& = \max\{ \nu_{\mathfrak{p}}(a);\; a \in Q(A)^*, \; \mathfrak{a} \subseteq A a\}  
  = \nu_{\mathfrak{p}}(\mathfrak{a}) %\\
  %& = \min\{\nu_{\mathfrak{p}}(a);\; a\in \mathfrak{a} \} 
  = \min\{\mu_{\mathfrak{p}}(a);\; a\in \mathfrak{a} \} \\
  & \geq \max\{ \mu_{\mathfrak{p}}(r);\; r \in Q^+(R)^+, \; R\mathfrak{a} \subseteq R r\} = \mu_{\mathfrak{p}}(R \mathfrak{a}) .
 \end{align*}
\end{proof}
\end{lemma}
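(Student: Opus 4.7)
The plan is to prove the identity $\mu_{\mathfrak{p}}(R\mathfrak{a}) = \nu_{\mathfrak{p}}(\mathfrak{a})$ by a chain of inequalities anchored on both ends by the max-formula for $\mu_{\mathfrak{p}}(R\mathfrak{a})$ from Remark~\ref{rem:val-of-divisors}. The pivotal observation is that $\mu_{\mathfrak{p}}$ restricts to $\nu_{\mathfrak{p}}$ on degree-zero elements (since $\phi(0)=0$), so any relation of fractional ideals in $Q(A)$ lifts to a corresponding relation of $K$-fractional ideals in $Q^+(R) = Q(A)[K]$ after multiplication by $R$, with both valuations matching up.

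The first preparatory step is to verify that $R\mathfrak{a}$ is indeed a $K$-fractional ideal of $R$: it is $K$-graded as an $R$-submodule of $Q^+(R)$ because $\mathfrak{a} \subseteq Q(A) = Q^+(R)_0$, and any nonzero $b \in [A:\mathfrak{a}]$ satisfies $bR\mathfrak{a} \subseteq R$, so $[R:R\mathfrak{a}] \neq 0$. For the upper bound, I would apply the max-formula: any $a \in Q(A)^*$ with $\mathfrak{a} \subseteq Aa$ yields $a^{-1} \in [R:R\mathfrak{a}] \cap Q^+(R)^+$, so $\mu_{\mathfrak{p}}(R\mathfrak{a}) \geq -\mu_{\mathfrak{p}}(a^{-1}) = \nu_{\mathfrak{p}}(a)$. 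Taking the supremum over such $a$ recovers exactly $\nu_{\mathfrak{p}}(\mathfrak{a})$, yielding $\mu_{\mathfrak{p}}(R\mathfrak{a}) \geq \nu_{\mathfrak{p}}(\mathfrak{a})$.

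For the reverse inequality I would use the standard min-description $\nu_{\mathfrak{p}}(\mathfrak{a}) = \min\{\nu_{\mathfrak{p}}(a);\, a \in \mathfrak{a}\}$ from Krull ring theory. If $r \in Q^+(R)^+$ satisfies $R\mathfrak{a} \subseteq Rr$, then each $a \in \mathfrak{a}$ lies in $Rr$, so $\nu_{\mathfrak{p}}(a) = \mu_{\mathfrak{p}}(a) \geq \mu_{\mathfrak{p}}(r)$. Minimizing the left side over $\mathfrak{a}$ and then maximizing the right side over admissible $r$ produces $\nu_{\mathfrak{p}}(\mathfrak{a}) \geq \mu_{\mathfrak{p}}(R\mathfrak{a})$, closing the sandwich. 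The in-particular assertion $\mu_{\mathfrak{p}}(R\mathfrak{p}') = \delta_{\mathfrak{p},\mathfrak{p}'}$ is then immediate from the corresponding fact $\nu_{\mathfrak{p}}(\mathfrak{p}') = \delta_{\mathfrak{p},\mathfrak{p}'}$ for essential valuations of a Krull ring at its prime divisors.

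The main subtlety, which I expect to be the only real obstacle, is the legitimacy of replacing a generic "enveloping" element $r \in Q^+(R)^+$ by its degree-zero analogues: the max in the definition of $\mu_{\mathfrak{p}}(R\mathfrak{a})$ ranges over all homogeneous $r$, not only over $a^{-1}$ with $a \in Q(A)^*$. However, the sandwich argument above sidesteps this issue without needing a restriction, since the same inequality chain forces equality regardless of which $r$ attains the max. This trick — using the dual (max) formula on one side and the (min) characterization of $\nu_{\mathfrak{p}}(\mathfrak{a})$ on the other — is what makes the argument robust, even though $R\mathfrak{a}$ need not a priori be a $K$-divisor.
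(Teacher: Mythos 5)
Your proof is correct and follows essentially the same route as the paper's: both establish $\mu_{\mathfrak{p}}(R\mathfrak{a}) \geq \nu_{\mathfrak{p}}(\mathfrak{a})$ via the max-formula applied to elements $a^{-1}$ with $\mathfrak{a} \subseteq Aa$, and close the sandwich via the min-characterization $\nu_{\mathfrak{p}}(\mathfrak{a}) = \min\{\nu_{\mathfrak{p}}(a);\, a \in \mathfrak{a}\}$ against enveloping homogeneous $r$ with $R\mathfrak{a} \subseteq Rr$. Your added check that $R\mathfrak{a}$ is a $K$-fractional ideal is a harmless supplement the paper leaves implicit.
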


\begin{proof}[Proof of Theorem~\ref{thm:alg-divisorial-algebras}]
Assertion (iv) follows from the Approximation Theorem for Krull rings (see e.g. \cite{Fo1973}). 
For (i) we observe that the canonical morphism $Q^+(R) \rightarrow Q(A)[K]$ is surjective. Indeed, for every $w \in K$ there exists an element $0 \neq a \chi^w \in R_w$ by (iv) and thus $a \chi^w / a \chi^0$ is mapped to $\chi^w$. 

For (ii) we first observe that by Remark~\ref{rem:val-of-divisors} $\alpha$ is a monomorphism of partially ordered groups. 
Since the above Lemma and Remark~\ref{rem:val-of-divisors} give $\alpha(\beta(\mathfrak{a})) = \mathfrak{a}$, $\alpha$ is also surjective and $\beta$ is its inverse map. 
In particular, they induce bijections between the sets of minimal positive elements of $Div^K(R)$ and $Div(A)$. This means that the divisorial ideals $\beta(\mathfrak{p})$ are the $K$-prime divisors of $R$. 
For any prime divisor $\mathfrak{p}$ of $A$ and any $K$-prime divisor $\mathfrak{q}' = \beta(\mathfrak{p}')$ we have $\mu_{\mathfrak{p}}(\mathfrak{q}') = \delta_{\mathfrak{p}, \mathfrak{p}'}$ and therefore $\mu_{\mathfrak{p}} = \nu_{\beta(\mathfrak{p})}$ is the essential $K$-valuation corresponding to $\mathfrak{q} = \beta(\mathfrak{p})$ and we have $\mathfrak{q} = \bangle{\mu_{\mathfrak{p}}^{-1}(\ZZ_{> 0}) \cap R}$. Since $\nu_{\mathfrak{q}}$ restricts to $\nu_{\mathfrak{p}}$ this implies $\mathfrak{q} \cap A = \mathfrak{p}$. 

 For (iii), let $\mathfrak{q}$ be a $K$-prime divisor of $R$, and let $\mathfrak{p} = \mathfrak{q} \cap A$ be the corresponding prime divisor of $A$. 
 First we show $(R_{\mathfrak{q}})_0 = A_{\mathfrak{p}}$. Let $f/g \in (R_{\mathfrak{p}})_0$. Then there are $a, b \in A$ with $f/g = a/b$, and $\nu_{\mathfrak{p}}(a/b) = \nu_{\mathfrak{q}}(f/g) \geq 0$, i.e. $a/b \in A_{\mathfrak{p}}$. 
 Each generator $t_0$ of the maximal ideal of $A_{\mathfrak{p}}$ satisfies $\nu_{\mathfrak{q}}(t_0) = \nu_{\mathfrak{p}}(t_0) = 1$ and is thus a generator of the $K$-maximal ideal of $R_{\mathfrak{q}}$. This implies the assertion. 
\end{proof}

\begin{remark}
 From \cite{ElKuWa2004} we observe that if $K$ is free, then for a $K$-prime divisor $\mathfrak{q}$ of $R$ and $\mathfrak{p} = A \cap \mathfrak{q}$, each choice of a generator for the maximal ideal of $A_{\mathfrak{p}}$ gives an isomorphism $R_{\mathfrak{q}} \cong A_{\mathfrak{p}}[K]$. In particular, $R_{\mathfrak{q}}$ is a Krull ring and one concludes that $R$ is a Krull ring. 
\end{remark}

The following well-behaved class of graded morphisms will be used in Section~\ref{characterization-of-Cox-sheaves} to describe the relation between the sections of $\mathcal{O}_X(\WDiv(X))$ and Cox sheaves. Their properties, some of which are listed in Proposition~\ref{prop:compw-iso-epis} below, ensure that Cox sheaves inherit all graded properties from $\mathcal{O}_X(\WDiv(X))$. 

A {\em component-wise isomorphic epimorphism} (CIE) is an epimorphism of graded rings $\phi: R' \rightarrow R$ accompanied by an epimorphism $\psi : K' \rightarrow K$ that restricts to isomorphisms on homogeneous components. 
If $\psi$ is fixed, then for each given $R$ one obtains $R'$ and $\phi$ constructively and uniquely by setting $R'_{w'}:=R_{\psi(w')}$ for $w' \in K'$. The functor from $K$-graded rings to $K'$-graded rings thus defined is right adjoint to the coarsening functor associated to $\psi$, see \cite{Ro2012}.
If $R'$ and $\psi$ are fixed, then each homomorphism $\chi: \ker(\psi) \rightarrow R'^{+,*}$ with $\chi(w') \in R'_{w'}$ defines a CIE $\phi : R' \rightarrow R' / \bangle{1 - \chi(w'); w' \in \ker(\psi)}$. 

\begin{proposition}\label{prop:compw-iso-epis}
 Let $\phi : R'\rightarrow R$ be a CIE. Then the following hold:
 \begin{enumerate}
  \item $R^+ \cong R'^+/\phi^{-1}(1)$ and there is a bijection of sets of graded ideals 
  \begin{align*}
   \{\mathfrak{a}' \trianglelefteq R' \} & \longrightarrow \{\mathfrak{a} \trianglelefteq R\} \\
   \mathfrak{a}' & \longmapsto \phi(\mathfrak{a}') \\
   \bangle{\phi^{-1}(\mathfrak{a}) \cap R'^+} & \longmapsfrom \mathfrak{a}
  \end{align*}
 respecting inclusions, products, quotients, sums and intersections. 
 \item Likewise, there is a bijection between the graded $R'$- resp. $R$-modules of $Q^+(R')$ and $Q^+(R)$.   
 \item If $M \subseteq R^+$ is a submonoid and $M':=\phi^{-1}(M)$, then $M'^{-1}R' \rightarrow M^{-1}R$ is again a CIE, in particular, for every $f' \in R'^+$ the map $R'_{f'} \rightarrow R_{\phi(f')}$ is a CIE. 
 \item $R'$ is $K'$-integral/-simple/-factorial resp. has units in every $K'$-degree if and only if $R$ is $K$-integral/-simple/-factorial resp. has units in every $K$-degree. 
 \item Let $\phi_S : S' \rightarrow S$ be a CIE with $R' \subseteq S'$ and $R \subseteq S$ extending the CIE $\phi: R' \rightarrow R$. 
 Suppose that $S'$ is $K'$-simple. Then the following hold: 
 \begin{enumerate}
  \item $S' = Q^+(R')$ if and only if $S = Q^+(R)$. 
  \item Each $K'$-valuation $\nu$ on $S'$ with $\ker(\phi) \subseteq \ker(\nu)$ induces a $K$-valuation $\b{\nu}$ on $S$ and vice versa. 
  \item $R'$ is a $K'$-Krull ring defined by $\{\nu_j\}_{j \in J}$ if and only if $R$ is a Krull ring defined by $\{\b{\nu}_j\}_{j \in J}$. Here, $\{\nu_j\}_{j \in J}$ are the essential $K'$-valuations if and only if $\{\b{\nu}_j\}_{j \in J}$ are the essential $K$-valuations. 
 \end{enumerate}
 \end{enumerate}
 Thus, $R'$ and $R$ share all of the graded properties defined in terms of graded ideals and all properties of $R^+/R^{+,*}$. 
\end{proposition}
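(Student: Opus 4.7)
\emph{Plan.} The central fact driving the proposition is that, by the CIE property, $\phi$ restricts on each $K'$-homogeneous component $R'_{w'}$ to a group isomorphism onto $R_{\psi(w')}$. From this I extract three consequences: (a) $\phi$ maps $R'^+$ surjectively onto $R^+$; (b) $\ker(\phi) \cap R'^+ = \{0\}$, since a homogeneous element mapping to $0$ must itself be zero; and (c) the set $U := \phi^{-1}(1) \cap R'^+$ is a subgroup of $R'^{+,*}$ (each element has a two-sided inverse obtained by lifting $1$ through the opposite-degree isomorphism) and acts by multiplication on $R'^+$ with orbits equal to the fibers of $\phi|_{R'^+}$. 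Consequence (c) yields $R^+ \cong R'^+/\phi^{-1}(1)$ directly. For the bijection of graded ideals in (i), the assignment $\mathfrak{a}' \mapsto \phi(\mathfrak{a}')$ yields a graded ideal (surjective ring map sending homogeneous generators to homogeneous generators), while $\mathfrak{a} \mapsto \bangle{\phi^{-1}(\mathfrak{a}) \cap R'^+}$ is graded by construction; mutual inverseness follows by (a) in one direction and by (b) together with (c) in the other, and compatibility with sums, intersections, products, and quotients is routine from surjectivity of $\phi$ on homogeneous elements.

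I treat (iii) before (ii) because it supplies the CIE needed for the latter. The natural map $M'^{-1}R' \to M^{-1}R$ is surjective with accompanying $\psi$, and on each $K'$-homogeneous component clearing denominators and invoking (b) gives injectivity, while surjectivity is inherited. Applying (i) to the CIE $Q^+(R') \to Q^+(R)$ (case $M = R^+$) then yields (ii). For (iv), $K'$-integrality and units-in-every-$K'$-degree are immediate from the component isomorphism; $K'$-simplicity follows from $R'_0 \cong R_0$ together with surjectivity of $\psi$ and the fact that $\deg_{K'}(R'^+) = \psi^{-1}(\deg_K(R^+))$; and $K'$-factoriality transfers via (i), since the bijection there matches graded principal ideals with $K$-prime generators on the two sides.

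For (v), assume $\phi_S$ is a CIE extending $\phi$ with $S'$ $K'$-simple, whence $S$ is $K$-simple by (iv). Item (a) holds because a homogeneous $s' \in S'^+$ is a ratio of elements of $R'^+$ if and only if $\phi_S(s')$ is such a ratio in $R^+$; the component isomorphism of $\phi_S$ together with surjectivity of $\phi$ on $R'^+$ transfers this property both ways. For (b), a $K'$-valuation $\nu$ on $S'$ with $\ker(\phi_S) \subseteq \ker(\nu)$ factors componentwise through the isomorphisms $S'_{w'} \to S_{\psi(w')}$ to give $\b{\nu}$, whose triangle-type axiom transfers componentwise; the converse pulls back analogously. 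For (c), the bijection $\nu_j \leftrightarrow \b{\nu}_j$ preserves valuation rings (by (iv) applied to $R'_{\nu_j} \to R_{\b{\nu}_j}$), local finiteness on homogeneous elements, and the order-theoretic minimality that defines essentiality, so the $K'$-Krull property with prescribed defining (resp. essential) family transfers both ways. The main obstacle I anticipate is the rigidity required in (i) and in (v)(c): one must verify that reconstructing a graded ideal or a defining family from the $R$-side returns exactly the original $R'$-side data rather than something larger. This hinges on the observation that a homogeneous element of $R'$ is determined by its $K'$-degree together with its $\phi$-image, together with the ``transitivity on fibers'' supplied by (c). Once this rigidity is in hand, the remaining verifications reduce to formal diagram-chasing.
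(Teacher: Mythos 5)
Your proposal is correct, and since the paper states Proposition~\ref{prop:compw-iso-epis} without proof (treating it as a basic property of CIEs), there is nothing to diverge from: your argument is exactly the one the paper's surrounding text presupposes. In particular, your consequence (c) --- that $\phi^{-1}(1)\cap R'^{+}$ is a group of homogeneous units acting transitively on the fibres of $\phi|_{R'^{+}}$ --- is precisely the kernel character $\chi\colon\ker(\psi)\to R'^{+,*}$ that the paper isolates in the construction preceding the proposition and in Remark~\ref{rem:compw-iso-epis-of-presheaves}, and you correctly identify and resolve the one genuinely non-formal point, namely the rigidity needed to see that $\bangle{\phi^{-1}(\phi(\mathfrak{a}'))\cap R'^{+}}=\mathfrak{a}'$ and that defining families of valuations are recovered exactly.
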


\section{divisorial $\mathcal{O}_X$-algebras}\label{divisorial-O_X-algebras}
Let $K$ be an abelian group. 
We begin with the prerequisites on graded sheaves needed for the definition of $K$-Krull sheaves - the sheaf-theoretic analogon of $K$-Krull rings. 
Recall that a $K$-graded (pre-)sheaf of rings $\mathcal{F}$ on a topological space $X$ is a (pre-)sheaf of $K$-graded rings with degree-preserving restriction maps. $\mathcal{F}$ is also called a graded (pre-)sheaf with grading group $K = gr(\mathcal{F})$. As a presheaf $\mathcal{F}$ then equals $\bigoplus_{w \in K}{\mathcal{F}_w}$ where $\mathcal{F}_w \subseteq \mathcal{F}$ is the (pre-)sheaf of abelian groups assigning $\mathcal{F}(U)_w$ to $U$. 
The monoid of $K$-homogeneous elements of $\mathcal{F}$ is the sheaf of monoids $\mathcal{F}^{+,0} := \bigcup_{w \in K}{\mathcal{F}_w}$. If all $\mathcal{F}(U)$ are $K$-integral, then $\mathcal{F}^+$ denotes the sheaf of monoids given by $\mathcal{F}^{+,0}(U) \setminus \{0\}$. 
A morphism $\phi : \mathcal{G} \rightarrow \mathcal{F}$ of graded (pre-)sheaves comes with a group homomorphism $\psi : gr(\mathcal{G}) \rightarrow gr(\mathcal{F})$ such that each of the morphisms of graded rings $\phi_U$ is accompanied by $\psi$. 
$\mathcal{F}$ is also called a graded $\mathcal{G}$-algebra. 

\begin{definition}
\begin{enumerate}
\item A {\em discrete value sheaf} is a sheaf of abelian groups $\mathcal{Z}$ with values in 
  $\{0, \ZZ\}$ such that $\mathcal{Z}_{\geq 0}(U) := \mathcal{Z}(U)_{\geq 0}$ defines a subsheaf of $\mathcal{Z}$. 
 \item Let $K$ be an abelian group. 
Let $\mathcal{S}$ be a 
sheaf of $K$-simple rings on $X$. 
A {\em discrete $K$-valuation} on $\mathcal{S}$ is a morphism $\mathcal{\nu} : \mathcal{S}^+ \rightarrow \mathcal{Z}$ to a discrete value sheaf such that each $\mathcal{\nu}_U$ is surjective and either a discrete $K$-valuation or zero. 
The associated {\em $K$-valuation sheaf} is the graded subsheaf $\mathcal{S}_{\mathcal{\nu}} \subseteq \mathcal{S}$ of rings generated by $\mathcal{\nu}^{-1}(\mathcal{Z}_{\geq 0})$. 
\item  A {\em $K$-Krull sheaf} in $\mathcal{S}$ is an intersection $\mathcal{R} = \bigcap_{j \in J}{\mathcal{S}_{\mathcal{\nu}_j}}$ of $K$-valuation sheaves such that for every $U$ and every $f \in \mathcal{R}^+(U)$ only finitely many $\mathcal{\nu}_{j, U}(f)$ are non-zero. 
 
 If $X$ is a scheme (or a graded scheme, see Section~\ref{graded-schemes}), then a defining family of $K$-valuations of a $K$-Krull sheaf $\mathcal{R}$ is called the family of {\em essential} $K$-valuations, if for each affine $U \subseteq X$, the family $\{\mathcal{\nu}_{j,U}; j \in J, \mathcal{\nu}_{j,U} \not\equiv 0\}$ is the family of essential $K$-valuations of the $K$-Krull ring $\mathcal{R}(U)$.  
\end{enumerate}
\end{definition}

For $K =0$ we usually omitt the prefix $K$. 
Recall that $X$ is a Krull scheme if it has a finite cover by spectra of Krull rings.   
A prime divisor is a point $Y \in X$ 
with one-codimensional closure. Sums and intersections with subscript $Y$ are taken over all prime divisors $Y$ of $X$ unless specified otherwise. 

\begin{example}
 Let $X$ be a Krull scheme and $K =0$. 
 Each prime divisor defines a valuation $\nu_Y : \mathcal{K}^* \rightarrow \ZZ_Y$ to the skyscraper sheaf $\imath_Y(\ZZ)$. The sections of its valuation sheaf $\mathcal{K}_{\nu_Y}$ on $U$ are $\mathcal{O}_{X,Y}$ if $Y \in U$ and $\mathcal{K}$ otherwise. This turns the structure sheaf 
 \[
  \mathcal{O}_X = \bigcap_Y{\mathcal{K}_{\nu_Y}} 
 \]
into a Krull sheaf with essential valuations $\nu_Y$.  
\end{example}

\begin{remark}
 A quasi-compact scheme $X$ is a Krull scheme if and only if $\mathcal{O}_X$ is a Krull sheaf. 
\end{remark}

For the remainder of this section, $X$ is a Krull scheme. 
Recall that the presheaf of Weil divisors is $\WDiv := \bigoplus_Y{\ZZ_Y}$ 
and there is a morphism 
$$\div := \sum_Y{\nu_Y} : \mathcal{K}^* \rightarrow \WDiv .$$
Its image $\PDiv$ is the presheaf of principal divisors of $X$ and its cokernel is the presheaf $\Cl$ of divisor class groups. For each prime divisor $Y$ there is a natural projection $pr_Y : \WDiv \rightarrow \imath_Y(\ZZ)$. 
The support $|D|$ of a Weil divisor $D \in \WDiv(U)$ is the union over the closures of $\{Y\}$ in $U$, where $Y$ runs through the prime divisors occuring with non-zero coefficient in $D$. 

\begin{example}\label{ex:O_X(L)}
 For a subgroup $L \leq \WDiv(X)$ the constant sheaf of $L$-graded group algebras $\mathcal{S} := \mathcal{K}[L] = \bigoplus_{D \in L}{\mathcal{K} \cdot \chi^D}$ is a sheaf of $L$-simple rings. Each prime divisor $Y \in X$ defines an $L$-valuation 
 \begin{align*}
  \mu_Y : \mathcal{S}^+ & \longrightarrow \imath_Y(\ZZ) \\
  \mathcal{S}^+(U) \ni f \chi^D & \longmapsto \mu_{Y,U}(f \chi^D) := \nu_{Y,U}(f) + pr_{Y,U}(D_{|U})
 \end{align*}
 Then $\mathcal{O}_X(L) := \bigcap_Y{\mathcal{S}_{\mu_Y}}$ is a $L$-Krull sheaf on $X$, called the {\em divisorial $\mathcal{O}_X$-algebra} associated to $L$. 
 Its homogeneous parts have sections $\mathcal{O}_X(L)_D(U) = \mathcal{O}_X(D)(U) \cdot \chi^D$, where $\mathcal{O}_X(D)$ is the $\mathcal{O}_X$-submodule of $\mathcal{K}$ associated to $D$ with sections
 \[
 \mathcal{O}_X(D)(U) = \{f \in \mathcal{K}(U); f=0 \text{ or } \div_U(f) + D_{|U} \geq 0\} , 
\]
 in particular $\mathcal{O}_X(L)_0 = \mathcal{O}_X$. 
 The sum over all $\mu_Y$ defines a morphism 
 \begin{align*}
  \div_L := \sum_Y{\mu_Y} : \mathcal{S}^+ & \longrightarrow \WDiv \\ 
   \mathcal{S}^+(U) \ni f \chi^D & \longmapsto \div_{L,U}(f \chi^D)= \div_U(f) + D_{|U}
 \end{align*}
with kernel $\mathcal{O}_X(L)^{+,*}$. In particular, $\mathcal{O}_X(L)(X)^{+,*}$ is the set of all elements $f \chi^D$ with $\div_X(f) = -D$. 
\end{example}

\begin{proposition}\label{prop:divisorial-O_X-algebras}
 In the above notation, the divisorial algebra $\mathcal{R}:=\mathcal{O}_X(L)$ has the following properties: 
 \begin{enumerate}
  \item there is a canonical isomorphism $\mathcal{S}(X) \cong \mathcal{R}_{\xi}$, and for affine $U$ we have $\mathcal{S}(U) \cong Q^+(\mathcal{R}(U))$ and $\deg_L(\mathcal{R}(U)^+) = L$, 
  \item $\{\mu_Y\}_Y$ are the essential $L$-valuations of $\mathcal{R}$, 
  \item The sections of $\mathcal{S}_{\mu_Y}$ on $U$ equal $\mathcal{R}_Y$ if $Y \in U$ and $\mathcal{S}$ otherwise, 
  \item The stalk at $x \in X$ is the $L$-local $L$-Krull ring 
  \begin{align*}
   \mathcal{R}_x = \bigcap_{x \in \b{\{Y\}}}{\mathcal{S}(X)_{\mu_{Y,X}}} \subseteq \mathcal{S}(X)
  \end{align*}
whose $L$-maximal ideal $\mathfrak{a}_x$ has homogeneous elements  
   \[
    \mathfrak{a}_x \cap \mathcal{R}_x^+ = \{ g \in \mathcal{R}_x^+; \; \text{there is }U \ni x \text{with }g \in \mathcal{R}(U), x \in |\div_{L,U}(g)|\} .
   \]
  Its homogeneous units are 
  \begin{align*}
    \mathcal{R}_x^{+,*}  &=  \{ g \in \mathcal{S}(X)^+; \; \text{ there is } U \ni x \text{ with } \div_{L,U}(g)  =0 \} \\
    & = \bigcap_{x \in \b{\{Y\}}}{\ker(\mu_{Y,X})} 
  \end{align*}
  and $\deg_L(\mathcal{R}_x^{+,*})$ is the subgroup of Weil divisors in $L$ that are principal near $x$. 
   The stalk at a prime divisor $Y$ has units in every degree and $\mathfrak{a}_Y$ has a generator in $(\mathcal{R}_Y)_0 = \mathcal{O}_{X,Y}$. 
  \item The image of $\div_{L,U}$ (resp. ${\div_{L,U}}_{|\mathcal{R}(U)^+}$) consists of (the non-negative elements in) the union over all $\Cl(U)$-classes of divisors in $L_{|U}$, in particular, if $L_{|U}$ maps onto $\Cl(U)$, then $\mathcal{R}(U)$ is $L$-factorial, 
 \end{enumerate}

\end{proposition}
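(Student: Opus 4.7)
The approach is to reduce everything to the affine case, where Theorem~\ref{thm:alg-divisorial-algebras} applies to $A = \mathcal{O}_X(U)$ together with the restriction homomorphism $\phi_U \colon L \to \Div(A)$, $D \mapsto D_{|U}$. Indeed, for an affine open $U$ the construction of Example~\ref{ex:A(K,phi)} applied to $(A,\phi_U)$ produces a ring whose $D$-component is $\mathcal{O}_X(D)(U)\,\chi^D$ and which is thus canonically identified with $\mathcal{R}(U)$; one may then read off the local halves of (i), (ii), (iv) and (v) directly from that theorem.

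The global statement in (i) follows from the fact that $\mathcal{R}_\xi = \bigoplus_{D \in L} \mathcal{O}_X(D)_\xi \cdot \chi^D = \bigoplus_{D \in L} \mathcal{K} \cdot \chi^D = \mathcal{S}(X)$, since the sheaves $\mathcal{O}_X(D)$ are $\mathcal{O}_X$-submodules of $\mathcal{K}$. For (ii), the defining property of essential $K$-valuations of a $K$-Krull sheaf amounts to an affine-local condition, which I would verify by noting that on an affine $U$ the non-zero $\mu_{Y,U}$ are precisely those with $Y \in U$, and these are the essential $L$-valuations of $\mathcal{R}(U)$ by Theorem~\ref{thm:alg-divisorial-algebras}(ii). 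For (iii), if $Y \notin U$ then $\mu_{Y,U}$ factors through the stalk $(\imath_Y\ZZ)(U) = 0$ and $\mathcal{S}_{\mu_Y}(U) = \mathcal{S}(U)$; if $Y \in U$, the homogeneous sections of $\mathcal{S}_{\mu_Y}(U)$ are the $f\chi^D$ with $\nu_Y(f) + \mathrm{coef}_Y(D) \ge 0$, and the same condition characterizes homogeneous elements of the stalk $\mathcal{R}_Y$, since as $V \ni Y$ shrinks the only surviving defining condition is the one at $Y$ itself (any other prime divisor $Y'$ with $Y \notin \overline{\{Y'\}}$ can be excised).

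Part (iv) is then handled by writing $\mathcal{R}_x = \bigcap_{x \in \overline{\{Y\}}} \mathcal{S}(X)_{\mu_{Y,X}}$, which is obtained by combining the Krull sheaf equality $\mathcal{R}(U) = \bigcap_{Y \in U} \mathcal{S}_{\mu_Y}(U)$ with (iii) and passing to the colimit over affine $U \ni x$: only the valuations at $Y$ with $x \in \overline{\{Y\}}$ remain nontrivial. The characterizations of $\mathfrak{a}_x \cap \mathcal{R}_x^+$ and of $\mathcal{R}_x^{+,*}$ come from unwinding $\mu_{Y,U}(g) > 0 \Leftrightarrow Y \in |\div_{L,U}(g)|$, together with the fact that $\mathcal{R}_x$ is the graded localization of $\mathcal{R}(U)$ at the complement of the prime divisors passing through $x$; the identification of $\deg_L(\mathcal{R}_x^{+,*})$ as divisors principal near $x$ follows because an element $f\chi^D \in \mathcal{R}(U)$ is a unit near $x$ iff $\div_U(f) + D_{|U} = 0$ on some neighborhood, i.e.\ $D_{|V} = -\div_V(f)$ is principal. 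The statement at prime divisors $Y$ specializes Theorem~\ref{thm:alg-divisorial-algebras}(iii)(iv).

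Part (v) is essentially a restatement of the definition $\div_{L,U}(f\chi^D) = \div_U(f) + D_{|U}$: the image consists of all sums of principal divisors on $U$ with restrictions of elements of $L$, which is exactly the union of $\Cl(U)$-classes represented by $L_{|U}$; intersecting with the cone of effective divisors gives the statement about $\mathcal{R}(U)^+$. When $L_{|U} \twoheadrightarrow \Cl(U)$, Theorem~\ref{thm:alg-divisorial-algebras}(ii) yields $\Cl^L(\mathcal{R}(U)) \cong \Cl(U)/\overline{\mathrm{im}(\phi_U)} = 0$, whence $\mathcal{R}(U)$ is $L$-factorial by the characterization of $K$-factoriality as $K$-Krull with trivial $K$-class group. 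No step is a serious obstacle once Theorem~\ref{thm:alg-divisorial-algebras} is at hand; the only delicate bookkeeping is the compatibility of sheaf-theoretic stalks with the affine-local intersection descriptions in (iii) and (iv).
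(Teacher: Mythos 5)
Your overall strategy --- identify $\mathcal{R}(U)$ for affine $U$ with the ring of Example~\ref{ex:A(K,phi)} built from $A=\mathcal{O}(U)$ and $\phi_U(D)=D_{|U}$, quote Theorem~\ref{thm:alg-divisorial-algebras}, and handle stalks by excising the finitely many prime divisors where a given homogeneous section has negative valuation --- is exactly the paper's, and it carries (i), (ii), (iii), (v) and the intersection formula in (iv). (Your derivation of $L$-factoriality in (v) from $\Cl^L(\mathcal{R}(U))\cong\Cl(U)/\overline{L_{|U}}=0$ is a legitimate minor variant of the paper's factorial-monoid argument.)

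There is, however, one genuine gap, in (iv): the claim that $\mathcal{R}_x$ is \emph{$L$-local} with $\mathfrak{a}_x\cap\mathcal{R}_x^+=\{g:\ x\in|\div_{L,U}(g)|\}$. You derive this from ``the fact that $\mathcal{R}_x$ is the graded localization of $\mathcal{R}(U)$ at the complement of the prime divisors passing through $x$,'' but that either presupposes what is to be shown or fails to deliver locality. If you mean localization by the multiplicative set $\mathcal{R}(U)^+\setminus\bigcup_{x\in\overline{\{Y\}}}\mathfrak{q}_Y$, the result is in general only graded-\emph{semi}local (the homogeneous non-units form a union of primes, which need not be an ideal --- compare $\ZZ$ localized away from $2\ZZ\cup 3\ZZ$); if you mean localization at a single $L$-prime, you must first know that the displayed set generates a \emph{prime} (equivalently, $L$-maximal) ideal whose homogeneous part is that set. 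The delicate point is closure under addition of elements of the \emph{same} degree: if $g$ is a non-unit because $\mu_Y(g)>0$ and $g'$ because $\mu_{Y'}(g')>0$ with $Y\neq Y'$, the valuation inequality $\mu_Y(g+g')\geq\min\{\mu_Y(g),\mu_Y(g')\}$ only gives $\geq 0$ at each of $Y,Y'$, so nothing prevents $g+g'$ from being a unit a priori. The paper closes this with Lemma~\ref{lem:div_K-and-sums}, $|\div_{L,U}(g)|\cap|\div_{L,U}(g')|\subseteq|\div_{L,U}(g+g')|$ (an inclusion of \emph{supports}, proved via $V(f)\cap V(f')\subseteq V(f+f')$ on the graded spectrum and Propositions~\ref{prop:divisor-complements} and~\ref{prop:graded-good-quotients}); since $x$ lies in both supports, it lies in the support of the sum. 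You need this lemma, or an equivalent argument, to justify $L$-locality of $\mathcal{R}_x$ and hence the description of $\mathfrak{a}_x$.
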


\begin{remark}\label{rem:O_X(L)-localization}\cite[Remark I.3.1.6]{ArDeHaLa}
For an open set $U \subset X$ each $g \in \mathcal{O}_X(L)(U)^+$ defines an open subset $U_g := U \setminus |\div_{L,U}(g)|$ and a canonical isomorphism
 \[
 \mathcal{O}_X(L)(U)_g \cong \mathcal{O}_X(L)(U_g) .
 \]
In particular, $\mathcal{O}_X(L)$ is quasi-coherent. 
\end{remark}

\begin{remark}
As a subsheaf of a constant sheaf, $\mathcal{O}_X(L)$ has injective restriction maps, and thus all canonical maps $\mathcal{O}_X(L)(U) \rightarrow \mathcal{O}_X(L)_x$ for $x \in U$ and $\mathcal{O}_X(L)_x \rightarrow \mathcal{O}_X(L)_{x'}$ for $x \in \b{x'}$ are injectve as well.
\end{remark}

\begin{proof}[Proof of Proposition~\ref{prop:divisorial-O_X-algebras}]
 For (i) note that the inclusions $\imath_V : \mathcal{R}(V) \subseteq \mathcal{S}(X)$ induce an injection $\imath_{\xi}: \mathcal{R}_{\xi} \rightarrow \mathcal{S}(X)$. For affine open $U \subseteq X$ we consider the canonical monomorphism 
  \[
  \alpha_U: Q^+(\mathcal{R}(U)) \rightarrow \mathcal{S}(X), \quad g/h \mapsto \imath_U(g) \imath_U(h)^{-1} .
 \]
 For surjectivity of $\imath_{\xi}$ and $\alpha_U$ let $f \chi^D \in S(X)_D$ and set $W := X \setminus |\div_X(f) + D|$. Then $f_{|W} \chi^D \in \mathcal{R}(W)_D$ and $\imath_{\xi}((f_{|W} \chi^D)_{\xi}) = f \chi^D$. 
 Furthermore, there exists $h \in \mathcal{O}_X(U) = \mathcal{R}(U)_0$ with $U_{h} \subseteq W \cap U$, and by Remark~\ref{rem:O_X(L)-localization} there are $m > 0$ and $g \chi^D\in \mathcal{R}(U)_D$ such that $g \chi^D (h \chi^0)^{-m} = f_{|U_h} \chi^D$ and hence $\alpha_U(g \chi^D /(h \chi^0)^{m}) = f \chi^D$. 
For the supplement and assertion (ii), we invoke Theorem~\ref{thm:alg-divisorial-algebras} with $A := \mathcal{O}(U)$, $\phi : L \rightarrow L_{|U}$ and $R = \mathcal{R}(U)$. 

For (iii) note that if $Y \notin U$ then $\mu_{Y,U}=0$ and therefore $\mathcal{S}(U)_{\mu_{Y,U}} = \mathcal{S}(U)$. 
If $Y \in U$, then (iv) gives $\mathcal{R}_Y = \mathcal{S}(X)_{\mu_{Y,X}} = \mathcal{S}(U)_{\mu_{Y,U}}$. 

For (iv) first note, that in $\mathcal{S}(X)$ we have
\[
 \mathcal{R}_x^+ = \{ f \chi^D \in \mathcal{S}(X)^+; \text{ there is } U \ni x \text{ with }f \in \mathcal{O}_X(D)(U)\}
\]
If $f \in \mathcal{O}_X(D)(U)$ and $x \in U$, then for every prime divisor $Y$ containing $x$ in its closure we have $Y \in U$ and thus $\mu_{Y,X}(f \chi^D) = \mu_{Y,U}(f \chi^D) \geq 0$. Conversely, if $f \chi^D \in \mathcal{S}(X)^+$ satisfies $\mu_{Y,X}(f \chi^D) \geq 0$ for all prime divisors $Y$ with $x \in \b{\{Y\}}$, then for the complement $W$ of all prime divisors $Y'$ with $\mu_{Y',X}(f \chi^D) < 0$ we have $f \in \mathcal{O}_X(D)(W)$ and $x \in W$. 
This establishes that $\mathcal{R}_x$ is the $L$-Krull ring in $\mathcal{S}(X)$ defined by all $\mu_{Y,X}$ with $x \in \b{\{Y\}}$. 
Its homogeneous units are therefore obtained as the intersection of the kernels of the defining $L$-valuations. For the second representation, let $g \in \mathcal{R}_x^{+,*} \subseteq \mathcal{S}(X)^+$ and let $W$ be the complement of all prime divisors $Y'$ with $\mu_{Y',X}(g) \neq 0$. None of these $Y'$ contain $x$ in their closure, therefore $x \in W$. The equation $\div_{L,W}(g)=0$ holds by definition of $W$. 
Conversely, if $g \in \mathcal{S}(X)^+$ satisfies $\div_{L,U}(g) = 0$ for some $U$ containing $x$, then $g$ is invertible in $\mathcal{R}(U)$ and hence in $\mathcal{R}_x$. 
In particular, $\deg_L(\mathcal{R}_x^{+,*})$ is contained in the subgroup of Weil divisors in $L$ that are principal near $x$. For the converse inclusion, let $D_{|U} = \div_U(f)$. Then $f^{-1}\chi^D \in \mathcal{R}(U)_D$ is a unit and thus $(f^{-1} \chi^D)_x$ is a unit. 
Let $\mathfrak{a}_x$ be the ideal generated by the set 
\[
 \{ g \in \mathcal{R}_x^+; \; \text{there is }U \ni x \text{with }g \in \mathcal{R}(U), x \in |\div_{L,U}(g)|\} 
\]
Due to Lemma~\ref{lem:div_K-and-sums}, this set is closed under addition of elements of the same degree and therefore coincides with $\mathfrak{a}_x \cap \mathcal{R}_x^+$. Its complement in $\mathcal{R}_x^+$ is $\mathcal{R}_x^{+,*}$ and thus $\mathfrak{a}_x$ is the only $L$-maximal ideal of $\mathcal{R}_x$.

For the supplement on the stalks of prime divisors first note that $(\mathcal{R}_Y)_0 = \mathcal{O}_{X,Y}$ because taking stalks commutes with direct sums. Now, let $U \subseteq X$ be affine with $Y \in U$ and $D \in L$. By the Approximation Theorem for Krull rings there exists $f \in \mathcal{O}_X(D)(U)$ with $pr_Y(\div_U(f)) =0$ and $pr_{Y'}(\div_U(f)) = pr_{Y'}(D_{|U})$ for all $Y' \in |D_{|U}|$ and $pr_{Y''}(\div_U(f)) \geq 0$ for all other prime divisors. Then $(f \chi^D)_Y$ is a unit of degree $D$ in $R_Y$.

In (v) the statements on the image of $\div_{L,U}$ follow from the definition of $\div_L$. 
If $\div_{L,U}$ is surjective, then $\mathcal{R}(U)^+ / \mathcal{R}(U)^{+,*} \cong \WDiv_{\geq 0}(U)$ is a factorial monoid, and thus also $\mathcal{R}(U)^+$ is a factorial monoid, meaning that $\mathcal{R}(U)$ is $L$-factorial. 
\end{proof}

\begin{remark}
 By arguments from \cite{ElKuWa2004}, each $\mathcal{R}(U)$ is a Krull ring. Thus, if $L_{|U}$ maps onto $\Cl(U)$, then $\mathcal{R}(U)$ is a factorial ring by \cite{An1979}. 
 However, the sections of Cox sheaves will in general not be factorial. Integrality and normality for the sections of Cox sheaves are proven in the case of normal (pre-)varieties, see \cite[Sect. I.5.1]{ArDeHaLa} or \cite{BeHa2003}, but neither proof seems to be applicable in the more general setting of Krull schemes. 
\end{remark}

\section{characterization of Cox sheaves}\label{characterization-of-Cox-sheaves}
Intuitively, a Cox sheaf should be a $\Cl(X)$-graded $\mathcal{O}_X$-algebra $\mathcal{R}$ whose $[D]$-homogeneous parts are of type $\mathcal{O}_X(D)$, i.e. there should exist isomorphisms of $\mathcal{O}_X$-modules $\pi_D : \mathcal{O}_X(D) \rightarrow \mathcal{R}_{[D]}$ for all Weil divisors $D \in \WDiv(X)$. This requirement fixes the $\mathcal{O}_X$-module structure of Cox sheaves. There is however no canonical way to equip such an $\mathcal{O}_X$-module with an $\mathcal{O}_X$-algebra structure. 
But we can and do require that the multiplication in $\mathcal{R}$ is {\em natural} in the sense that up to the isomorphisms $\pi_D$ it is given by the multiplication in $\mathcal{K}$; meaning that to multiply homogeneous sections of degree $[D]$ and $[D']$ in $\mathcal{R}$ is the same as to apply $\pi_D^{-1}$ resp. $\pi_{D'}^{-1}$, multiply the resulting sections of $\mathcal{O}_X(D)$ and $\mathcal{O}_X(D')$ in $\mathcal{K}$ and then apply $\pi_{D+D'}$. 
This translates into the condition that the morphism of $\mathcal{O}_X$-modules 
\[
 \mathcal{O}_X(\WDiv(X)) = \bigoplus_{D \in \WDiv(X)}{\mathcal{O}_X(D) \cdot \chi^D} \xrightarrow{\quad \pi \quad} \mathcal{R} = \bigoplus_{[D] \in \Cl(X)}{\mathcal{R}_{[D]}} .
\]
defined by the sum of the $\pi_D$ is a morphism of graded $\mathcal{O}_X$-algebras. 
Summing up, a Cox sheaf is defined as a $\Cl(X)$-graded $\mathcal{O}_X$-algebra $\mathcal{R}$ posessing a graded morphism from $\mathcal{O}_X(\WDiv(X))$ to $\mathcal{R}$ which restricts to isomorphisms of the homogeneous parts. 
Such a kind of morphism between two graded sheaves has useful properties and thus justifies the following definition.

\begin{definition}
 A morphism $\pi: \mathcal{F} \rightarrow \mathcal{G}$ of graded presheaves of rings with accompanying map $\psi : L \rightarrow K$ of abelian groups is called a {\em component-wise isomorphic epimorphism} (CIE) if $\psi$ is an epimorphism and the restriction $\pi_{|\mathcal{F}_w}$ is an isomorphism for every $w \in L$. Equivalently, every pair $(\pi_U, \psi)$ is a CIE of rings. 
 \end{definition}
 
 \begin{remark}\label{rem:compw-iso-epis-of-presheaves}
 Let $\psi: L \rightarrow K$ be an epimorphism of abelian groups. 
 \begin{enumerate}
  \item Let $\pi: \mathcal{F} \rightarrow \mathcal{G}$ be a CIE accompanied by $\psi$. Then there is an exact sequence 
 \[
  \xy
  \xymatrix
  {
  0 \ar[r] & \ker(\psi) \ar[r]^-{\chi} & \mathcal{F}^+ \ar[r]^-{\pi} & \mathcal{G}^+ \ar[r] & 0 
  }
  \endxy
 \]
 where $\ker(\psi)$ is considered as a constant presheaf and $\chi_U$ maps $w \in L$ to the unique preimage of $1_{\mathcal{G}(U)}$ in $\mathcal{F}(U)_w$. Furthermore, $\mathcal{F}$ is a sheaf if and only if $\mathcal{G}$ and $\ker(\pi)$ are sheaves. 
 \item A morphism $\pi: \mathcal{F} \rightarrow \mathcal{G}$ of graded sheaves of rings is a CIE of sheaves if and only if every $\pi_x : \mathcal{F}_x \rightarrow \mathcal{G}_x$ is a CIE of graded rings. 
 \item Let $\mathcal{F}$ be a $L$-graded presheaf of rings. For a morphism $\chi : \ker(\psi) \rightarrow \mathcal{F}^{+,*}$ with $\chi_U(w) \in \mathcal{F}(U)_w$ the cokernel of the $K$-graded presheaf of ideals $$\mathcal{I}_{\chi}: U \mapsto \bangle{1_{\mathcal{F}(U)} - \chi_U(w); w \in \ker(\psi)}$$ is a CIE. Conversely, every CIE with prescribed $\mathcal{F}$ and $\psi$ is of this form with $\chi$ as in (i). 
 \end{enumerate}
 \end{remark}

 In this terminology, the precise definition of Cox sheaves is the following.
 
\begin{definition}
 Let $X$ be a Krull scheme. A Cox sheaf on $X$ is a $\Cl(X)$-graded sheaf of rings $\mathcal{R}$ such that there exists a CIE $\pi: \mathcal{O}_X(\WDiv(X)) \rightarrow \mathcal{R}$ that is accompanied by the canonical map $\WDiv(X) \rightarrow \Cl(X)$. $\mathcal{R}$ is then automatically a $\mathcal{O}_X$-algebra with $\mathcal{R}_0 = \mathcal{O}_X$. 
\end{definition}

\begin{remark}
 For a $\Cl(X)$-graded sheaf $\mathcal{R}$ on a Krull scheme $X$ the following are equivalent:
 \begin{enumerate}
  \item $\mathcal{R}$ is a Cox sheaf, 
  \item for every subgroup $L \leq \WDiv(X)$ mapping onto $\Cl(X)$ there exists a CIE $\pi : \mathcal{O}_X(L) \rightarrow \mathcal{R}$, 
  \item for some subgroup $L \leq \WDiv(X)$ mapping onto $\Cl(X)$ there exists a CIE $\pi : \mathcal{O}_X(L) \rightarrow \mathcal{R}$. 
 \end{enumerate}
\begin{proof}
 Assume that (iii) holds. 
 Let $\pi: \mathcal{O}_X(L) \rightarrow \mathcal{R}$ be a CIE. Let $D_j, j\in J$ be a basis of $\WDiv(X)$. Then there exist $D'_j \in L, j \in J$ and $f_j \in \mathcal{O}_X(D'_j - D_j)(X)$ with $\div(f_j) + D'_j =D_j$, and the isomorphisms 
 \[
  \mathcal{O}_X(D_j) \xrightarrow{\cdot f_j} \mathcal{O}_X(D'_j) 
 \]
fit together to a homomorphism $\Phi : \mathcal{O}_X(\WDiv(X)) \rightarrow \mathcal{O}_X(L)$ with accompanying homomorphism $\phi : \WDiv(X) \rightarrow L, D_j \mapsto D'_j$. 
The composition $\pi \circ \Phi : \mathcal{O}_X(L) \rightarrow \mathcal{R}$ is the epimorphism requested in assertion (i). 
\end{proof}

\end{remark}

Existence of Cox sheaves follows from Remark~\ref{rem:compw-iso-epis-of-presheaves} because for any $L$ mapping onto $\Cl(X)$ a suitable map $\chi$ is defined by assigning arbitrary $f_j\in \mathcal{O}_X(D_j)(X)$ with $\div_X(f_j) = -D_j$ to the elements of a basis $\{D_j\}_{j\in J}$ of $L \cap \PDiv(X)$, and thus the presheaf $\mathcal{R} = \mathcal{O}_X(L) / \mathcal{I}_{\chi}$ is a Cox sheaf, compare \cite{ArDeHaLa}. As seen by this discussion, our definition is the axiomatic version of the constructive approach of Hausen and Arzhantsev. 

Uniqueness is a matter of caution. 
In the case that $\Cl(X)$ is free, it is well-known that all Cox sheaves are isomorphic. 
A further condition enforcing uniqueness in the case of prevarieties over an algebraically closed field $\KK$ is $\mathcal{O}(X)^* = \KK^*$ which holds e.g. if $X$ is projective, see \cite[Sect. I.4.3]{ArDeHaLa}. 
In general Cox sheaves are only {\em weakly unique} in the following sense: 

\begin{proposition}\label{prop:weak-uniqueness-of-cox-sheaves}
Let $X$ be a Krull scheme and let $\mathcal{R}$ and $\mathcal{R}'$ be Cox sheaves on $X$ and let $U \subseteq X$ be open. Then the following hold:
\begin{enumerate}
 \item $\mathcal{R}^+/\mathcal{R}^{+,*} \cong \mathcal{O}_X(\WDiv(X))^+/\mathcal{O}_X(\WDiv(X))^{+,*} \cong \mathcal{R}'^+ / \mathcal{R}'^{+,*}$ . 
 \item There are bijections respecting sums, intersections, inclusions, products and ideal quotients between the sets of graded ideals of $\mathcal{R}(U)$ and $\mathcal{R}'(U)$.
 \item $\mathcal{R}(U)$ is finitely generated as a $\mathcal{O}_X(U)$-algebra if and only if $\mathcal{R}'(U)$ is so. If $X$ is a scheme over $S = \Spec(B)$, then $\mathcal{R}(U)$ is finitely generated over $B$ if and only if $\mathcal{R}'(U)$ is so. 
\end{enumerate}

\begin{proof}
Everything but the last assertion follows directly from Proposition~\ref{prop:compw-iso-epis}. 
Suppose that $\mathcal{R}(U)$ is finitely generated by homogeneous sections $g_1,\ldots,g_m$. Then $\Cl(X)$ is finitely generated by Theorem~\ref{intro-thm:char-of-Cox-sheaves}. Let $L \leq \WDiv(X)$ be a finitely generated subgroup mapping onto $\Cl(X)$ and let $\pi : \mathcal{O}_X(L) \rightarrow \mathcal{R}$ and $\pi' : \mathcal{O}_X(L) \rightarrow \mathcal{R}'$ be CIE. Let $\chi$ be the kernel character of $\pi$ and let $D_1,\ldots,D_n$ be a basis of $L \cap \PDiv(X)$. Then $\mathcal{O}_X(L)(U)$ is generated by $\chi_U(\pm D_1),\ldots,\chi_U(\pm D_n)$ and any choice of homogeneous preimages under $\pi_U$ $f_1,\ldots,f_m$ for $g_1,\ldots,g_m$. Thus, $\mathcal{R}'(U)$ is generated by their images under $\pi'_U$.
\end{proof}
  
\end{proposition}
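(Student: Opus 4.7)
The plan is to exploit that both Cox sheaves admit CIEs from the common divisorial algebra $\mathcal{O}_X(\WDiv(X))$. I will fix CIEs $\pi : \mathcal{O}_X(\WDiv(X)) \to \mathcal{R}$ and $\pi' : \mathcal{O}_X(\WDiv(X)) \to \mathcal{R}'$, both accompanied by the canonical map $\WDiv(X) \to \Cl(X)$, and reduce each of (i)--(iii) to a sectionwise application of Proposition~\ref{prop:compw-iso-epis}, composed so as to land in the ring of the other Cox sheaf.

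For (i), Proposition~\ref{prop:compw-iso-epis}(i) identifies $\mathcal{R}(U)^+$ with $\mathcal{O}_X(\WDiv(X))(U)^+/\pi_U^{-1}(1)$. Since $\pi_U^{-1}(1)$ equals the subgroup of kernel characters $\chi_U(\ker \psi) \subseteq \mathcal{O}_X(\WDiv(X))(U)^{+,*}$ and $\pi_U$ maps homogeneous units onto homogeneous units, dividing out units induces the stated isomorphism, naturally in $U$ and hence at the level of (pre)sheaves. The same for $\pi'$ then yields the composed isomorphism. For (ii), Proposition~\ref{prop:compw-iso-epis}(i) also provides an inclusion-, sum-, intersection-, product- and quotient-preserving bijection between graded ideals of $\mathcal{O}_X(\WDiv(X))(U)$ and those of $\mathcal{R}(U)$; composing with the analogous bijection for $\pi'_U$ gives the desired correspondence between graded ideals of $\mathcal{R}(U)$ and $\mathcal{R}'(U)$.

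Part (iii) is where I expect the main obstacle: the shared source $\mathcal{O}_X(\WDiv(X))$ is typically not of finite type, so this particular intermediate algebra cannot be used directly. The idea is to replace it with $\mathcal{O}_X(L)$ for a finitely generated subgroup $L \leq \WDiv(X)$ still mapping onto $\Cl(X)$. For such an $L$ to exist, $\Cl(X)$ itself must be finitely generated, which I deduce from finite generation of $\mathcal{R}(U)$ as follows: homogeneous generators $g_1,\ldots,g_m$ span $\deg_K(\mathcal{R}(U)^+)$ as a monoid, and by Theorem~\ref{intro-thm:char-of-Cox-sheaves}(i) the enveloping group of $\deg_K(\mathcal{S}^+)$ equals $\Cl(X)$, so a simple comparison with the stalk $\mathcal{R}_\xi$ forces $\Cl(X)$ to be generated by the classes of the $\deg(g_i)$. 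Having fixed such an $L$, the equivalence discussed just before the proposition produces CIEs $\pi : \mathcal{O}_X(L) \to \mathcal{R}$ and $\pi' : \mathcal{O}_X(L) \to \mathcal{R}'$.

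The remaining step, and the point that requires actual care, is the verification that if $D_1,\ldots,D_n$ is a $\ZZ$-basis of $L \cap \PDiv(X)$ and $F_1,\ldots,F_m \in \mathcal{O}_X(L)(U)$ are homogeneous preimages of $g_1,\ldots,g_m$ under $\pi_U$, then
\[
\mathcal{O}_X(L)(U) \;=\; \mathcal{O}_X(U)[F_1,\ldots,F_m,\,\chi_U(\pm D_1),\ldots,\chi_U(\pm D_n)].
\]
Given a homogeneous $a \in \mathcal{O}_X(L)(U)$ of $L$-degree $D'$, I will write $\pi_U(a)$ as a homogeneous polynomial expression in the $g_i$ with coefficients in $\mathcal{O}_X(U) = \mathcal{R}(U)_0$, lift each monomial via the $F_i$ (landing in possibly varying $L$-degrees all projecting to $\psi(D')$), and absorb each degree discrepancy, which by construction lies in $\ker \psi$ and hence is a $\ZZ$-combination of the $D_j$, by multiplying with the appropriate unit $\chi_U(w)$ in the subalgebra. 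Applying $\pi'_U$ then gives finite generators of $\mathcal{R}'(U)$ over $\mathcal{O}_X(U)$, and the $B$-algebra variant follows immediately by adjoining $B$-algebra generators of $\mathcal{O}_X(U)$.
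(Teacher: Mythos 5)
Your proposal is correct and follows essentially the same route as the paper: parts (i) and (ii) by composing the ideal/monoid bijections of Proposition~\ref{prop:compw-iso-epis} through the common source, and part (iii) by replacing $\mathcal{O}_X(\WDiv(X))$ with $\mathcal{O}_X(L)$ for a finitely generated $L$ surjecting onto $\Cl(X)$ and generating $\mathcal{O}_X(L)(U)$ by the kernel characters $\chi_U(\pm D_j)$ together with homogeneous lifts of the $g_i$. Your explicit degree-correction argument (absorbing the discrepancy in $\ker\psi = L \cap \PDiv(X)$ via the units $\chi_U(w)$ and then invoking the component-wise isomorphism) is exactly the step the paper's proof leaves implicit, and it is carried out correctly.
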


The above shows that the question of uniqueness is of little practical consequence since all Cox sheaves on a given $X$ behave in the same way. 
We now proceed with the proof of Theorem~\ref{intro-thm:char-of-Cox-sheaves}. The general ideal for showing that Cox sheaves have the asserted properties is to show that they are inherited from $\mathcal{O}_X(\WDiv(X))$ because CIEs preserve most 
graded properties (even in both directions). 
 The second part of the proof adepts the arguments of \cite[Thm. I.6.4.3 and Prop. I.6.4.5]{ArDeHaLa}.

\begin{proof}[Proof of Theorem~\ref{intro-thm:char-of-Cox-sheaves}]
 Let $\mathcal{R}$ be a Cox sheaf on $X$ and let $\pi : \mathcal{O}_X(L) \rightarrow \mathcal{R}$ be a CIE. 
 Then we have commutative diagrams with CIE downward arrows 
 \[
  \xy
  \xymatrix{
   \ar@{}^{(1)}[d] & \mathcal{O}_X(L)(U) \ar[r]\ar[d]_{\pi_U} & \mathcal{O}_X(L)_x \ar[d]^{\pi_x} & & \mathcal{O}_X(L)_x \ar[r]\ar[d]_{\pi_U} & \mathcal{O}_X(L)_{x'} \ar[d]^{\pi_{x'}} & \ar@{}_{(2)}[d]\\
  &\mathcal{R}(U) \ar[r] & \mathcal{R}_x & & \mathcal{R}_x \ar[r] & \mathcal{R}_{x'} & 
  }
  \endxy
 \]
 and the lower arrows inherit injectivity from the upper ones. 
 Considering diagram $(1)$ for the generic point $\xi$ we see that $\mathcal{R}_{\xi}$ is $K$-simple with degree zero part $\mathcal{O}_{X,\xi}$ and $\deg_K(\mathcal{R}_{\xi}^+) = K$ because $\mathcal{O}_X(L)_{\xi}$ is $L$-simple with degree zero part $\mathcal{O}_{X,\xi}$ and $\deg_L(\mathcal{O}_X(L)_{\xi}^+) = L$. Denote by $\mathcal{S}_L$ the constant sheaf assigning $\mathcal{O}_X(L)_{\xi}$. Then $\pi_{\xi}$ defines a CIE of sheaves $\pi_{\mathcal{S}}: \mathcal{S}_L \rightarrow \mathcal{S}$ and the constant sheaf $\mathcal{S} := \underline{\mathcal{R}_{\xi}}$ has the desired properties. $\mathcal{R} $ is a subsheaf of $\mathcal{S}$ and hence $K$-integral. 

 For (ii), consider the diagram with exact rows and injective upward arrows 
 \[
  \xy
  \xymatrix
  {
  0 \ar[r] & \ker(\psi) \ar[r]^-{\chi_{\mathcal{S}}} & \mathcal{S}_L^+ \ar[r]^-{\pi_{\mathcal{S}}} & \mathcal{S}^+ \ar[r] & 0 \\
  0 \ar@{=}[u] \ar[r] & \ker(\psi) \ar@{=}[u] \ar[r]^-{\chi} & \mathcal{O}_X(L)^+ \ar@{^{(}->}[u] \ar[r]^-{\pi} & \mathcal{R}^+ \ar[r] \ar@{^{(}->}[u] & 0 \ar@{=}[u] 
  } 
  \endxy
 \]
 Because $\ker(\pi_{\mathcal{S},X}) = \im(\chi_{\mathcal{S},X})$ is contained in $\mathcal{O}_X(L)(X)^{+,*}$, its elements are trivially valuated by all $\mu_{Y,X}$. Consequently, $\ker(\pi_{\mathcal{S},U}) = \im(\chi_{\mathcal{S},U})$ is valuated trivially by all $\mu_{Y,U}$. 
 Thus, each $\mu_Y$ induces a $K$-valuation $\b{\mu}_Y : \mathcal{S}^+ \rightarrow \ZZ_Y$ which also restrict to $\nu_Y$ on $\mathcal{K}^*$. Since $\mathcal{O}_X(L)$ is defined in $\mathcal{S}_L$ by the family $\{\mu_Y\}_Y$, the equality $\mathcal{R} = \bigcap_Y{\mathcal{S}_{\b{\mu}_Y}}$ now follows by applying Proposition~\ref{prop:compw-iso-epis}(v) to the sections over arbitrary open $U$ and assertion (ii) is proven. 
 
 The first part of assertion (iii) is due to the fact that $\mu_Y$ and $\b{\mu_Y}$ have the same image. For the second part, consider an element $\pi_X(f \chi^D)$ of the kernel of $\div_{K, X} = \sum_Y{\b{\mu}_Y}$, i.e. a global homogeneous unit. Then $\div_{L, X}(f \chi^D) = 0$, i.e. $D = -\div_X(f)$ is a principal divisor, hence $\pi_X(f \chi^D)$ has degree $[D] = [0]$.
 
  Concerning the supplement we calculate
  \[
   [D] = \deg_K(\pi_{\xi}(f \chi^D)) = [\deg_L(f \chi^D)] = [\div_{L,X}(f \chi^D)] = [\div_{K,X}(\pi_{\xi}(f \chi^D))] .
  \]
  
It remains to show that a $K$-graded sheaf $\mathcal{R}$ satisfying (i)-(iii) is a Cox sheaf. 
Recall the notation $\div_K := \sum_Y{\b{\mu}_Y} : \mathcal{S}^+ \rightarrow \WDiv$ and note that by (ii) we have ${\div_K}_{|\mathcal{K}^*} = \div$. 
In order to show that $K$ is canonically isomorphic to $\Cl(X)$ we first note that by (ii) the degree map $\deg_K$ induces an isomorphism 
$\mathcal{S}(X)^+ / \mathcal{S}(X)_0^* \cong K$. 
The homomorphism $\delta : \mathcal{S}(X)^+ \rightarrow \Cl(X), f \mapsto [\div_{K,X}(f)]$ thus induces the map 
$$
\b{\delta} : K \longrightarrow \Cl(X), \quad \deg_K(f) \longmapsto [\div_{K,X}(f)]
$$
which has cokernel $\Cl(X)/\b{\im(\div_{K,X})}$ and kernel $\deg_K(\mathcal{R}(X)^{+,*})$. 
Thus, condition (iii) precisely says that $\b{\delta}$ is an isomorphism. 

Next, we show that the ismorphism $\mathcal{K} = \mathcal{S}_0 \xrightarrow{\lambda_f} \mathcal{S}_{\deg_K(f)}$ given by multiplication with $f \in \mathcal{S}(X)^+$ restricts to an isomorphism $\lambda_f : \mathcal{O}_X(\div_{X,K}(f)) \longrightarrow \mathcal{R}_{\deg_K(f)}$. 
Indeed, for a non-zero $g \in \mathcal{O}_X(\div_{K,X}(f))(U)$ we calculate 
\[
 \div_{U,K}(f_{|U} g) = \div_U(g) + (\div_{X,K}(f))_{|U} \geq 0 ,
\]
i.e. $f_{|U} g \in \mathcal{R}(U)^+$. 
Conversely, each non-zero $h \in \mathcal{R}(U)_{\deg_K(f)}$ satisfies
\[
 \div_U((f_{|U})^{-1}h) + {\div_{X,K}(f)}_{|U} = \div_{K,U}(h) \geq 0.
\]

Now, let $L \leq \WDiv(X)$ be any subgroup mapping onto $\Cl(X)$ and let $\{D_j\}_{j\in J}$ be a basis of $L$. Then there exist $f_j \in S^+$ , $j\in J$ with $\div_{K,X}(f_j) = D_j$. 
We set $f_D := \prod_{j\in J}{f_j^{m_j}}$ for $D = \sum_{j\in J}{m_j D_j}$. 
By our first claim we know that $\b{\delta}(\deg_K(f_D)) = [D]$. 
By our second claim, every $f_D$ defines an isomorphism
\[
 \lambda_{f_D} : \mathcal{O}_X(D) \rightarrow \mathcal{R}_{[D]} .
\]
By construction, the isomorphisms respect the multiplication of homogeneous components in $\mathcal{O}_X(L)$ and thus define a graded epimorphism $\mathcal{O}_X(L) \rightarrow \mathcal{R}$. 
Hence, $\mathcal{R}$ is a Cox sheaf. 
\end{proof}

\begin{remark}
One may argue that the characterizing condition (ii) of Theorem~\ref{intro-thm:char-of-Cox-sheaves} is not purely intrinsic because the condition that $\mathcal{R}$ is a Cox sheaf in $\mathcal{S}$ contains an existence statement for certain discrete $K$-valuations which are not derived from $\mathcal{R}$. 
A more obviously intrinsic characterization is obtained by replacing (ii) with 
\begin{enumerate}
\item[(ii')] $\mathcal{R}$ is a sheaf of $K$-Krull rings and every affine open $U$ satisfies:
\begin{enumerate}
 \item $Q^+(\mathcal{R}(U)) = \mathcal{S}(U)$, 
 \item the essential $K$-valuations $\{\mu_{Y,U}\}_{Y \in U}$ of $\mathcal{R}(U)$ restrict on $\mathcal{K}(U)^* = Q(\mathcal{O}_X(U))^*$ to the essential valuations $\{\nu_{Y,U}\}_{Y \in U}$ of $\mathcal{O}_X(U)$, 
 \item for every $Y \in U$ and every affine open $V \ni Y$ we have $\mu_{Y,U} = \mu_{Y,V}$. 
\end{enumerate}
\end{enumerate}
Since the family of essential $K$-valuations of a $K$-Krull ring $R$ may be derived from $R$, condition (ii') is indeed intrinsic. 
\end{remark}

\begin{proof}[Proof of Theorem~\ref{intro-thm:Cox-sheaf-props}]
 Quasi-coherence of $\mathcal{R}$ follows from the more general observation that 
 for every non-zero $g \in \mathcal{R}(U)^+$ there is a canonical isomorphism 
 \[
   \mathcal{R}(U)_g \cong \mathcal{R}(U_g) ,
 \]
 where $U_g := U \setminus |\div_{U,K}(g)|$. This in turn follows directly from the corresponding observation on $\mathcal{O}_X(L)$ using that $\pi_U$ and $\pi_{U_g}$ are CIEs and Proposition~\ref{prop:compw-iso-epis}(iii). 
  
 For (i) note that $K$-factoriality of $\mathcal{R}(U)$ follows via Proposition~\ref{prop:compw-iso-epis}(iv) from $L$-factoriality of $\mathcal{O}_X(U)$ which was proven in Proposition~\ref{prop:divisorial-O_X-algebras}(v). Now, let $D = D^+ - D^-$ be any Weil divisor on $X$, written as a difference of effective divisors. Choose a subgroup $L$ which contains $D^+$ and $D^-$ mapping onto $\Cl(X)$ and a componentwise isomorphic epimorphism $\pi : \mathcal{O}_X(L) \rightarrow \mathcal{R}$. Then for all open $U$ we have
$$
[D] %= [D^+] - [D^-] 
= \deg_K(\pi_U(\chi^{D^+})) - \deg_K(\pi_U(\chi^{D^-})) \in \bangle{\deg_K(\mathcal{R}(U)^+)} .
$$ 
 For the case that $U$ is affine, observe that in the diagram of CIEs
 \[
 \xy
  \xymatrix{
  \mathcal{O}_X(L)(U) \ar[r]\ar[d] & Q^+(\mathcal{O}_X(L)(U)) \ar[r]\ar[d] & \mathcal{O}_X(L)_{\xi} \ar[d] \\
  \mathcal{R}(U) \ar[r] & Q^+(\mathcal{R}(U)) \ar[r] & \mathcal{R}_{\xi} 
  }
 \endxy 
 \]
 the lower right arrow is a graded isomorphism because the upper right arrow is one. Furthermore, 
 $$
 \deg_K(\mathcal{R}(U)^+) = \b{\deg_L(\mathcal{O}_X(L)(U)^+)} = \b{L} = K .
 $$
 
 The first part of assertion (ii) follows directly from Proposition~\ref{prop:divisorial-O_X-algebras}(ii) and Proposition~\ref{prop:compw-iso-epis}(v). The second statement is obvious for $Y \notin U$ and follows from (iii) otherwise. 
 
 For assertion (iii), consider the diagram (2) of inclusions and CIEs from the beginning of the proof of Theorem~\ref{intro-thm:char-of-Cox-sheaves} with $x' = \xi$. Since $\mathcal{O}_X(L)_x$ is the intersection over all $\mathcal{K}[L](X)_{\mu_{Y,X}}$ with $x \in \b{\{Y\}}$, Proposition~\ref{prop:compw-iso-epis}(v) implies that $\mathcal{R}_x$ is the intersection over all $\mathcal{S}(X)_{\b{\mu}_{Y,X}}$ with $x \in \b{\{Y\}}$. Moreover, $\mathcal{R}_x^{+,*}$ is the image of $\mathcal{O}_X(L)_x^{+,*}$ and thus has the requested description. Furthermore, the unique $L$-maximal ideal of $\mathcal{O}_X(L)_x$ is mapped onto a unique $K$-maximal ideal $\mathfrak{a}_x$ by Proposition~\ref{prop:compw-iso-epis}(i). Its homogeneous elements, which were calculated in Proposition~\ref{prop:divisorial-O_X-algebras}(iv), are mapped onto the homogeneous elements of $\mathfrak{a}_x$ which establishes the desired description. 
 
 For (iv), observe that the stalk $\mathcal{R}_Y$ at a prime divisor $Y$ has units in every degree because $\mathcal{O}_X(L)_Y$ does. 
\end{proof}

\begin{remark}
 One of the starting points for the present considerations on the valuative structure of Cox sheaves was \cite[Sect. I.5]{ArDeHaLa}. The $[D]$-divisor of a non-zero $f \in \mathcal{R}(X)_{[D]}$ defined there is $\div_{\Cl(X), X}(f)$ in our notation, and it is shown that the assignment $f \mapsto \div_{[D]}(f)$ is homomorphic and encodes the divisibility relation in $\mathcal{R}(X)$, which in our setting is due to the definition of $\div_{\Cl(X),X}$ as the sum of all $\mu_{Y,X}$. 
\end{remark}

\section{graded schemes}\label{graded-schemes}
Graded schemes are implicitely already well-known from the proj construction, see Example~\ref{ex:proj}. Related examples of graded schemes in the context of toric good quotients have been studied in \cite{Pe2007}. 
More generally, the categories of $K$-graded resp. noetherian $\ZZ$-graded schemes with degree-preserving morphisms have been discussed in \cite{Ca2006, Te2004}. 
The category of graded schemes introduced below includes the aforementioned and has more morphisms, in particular good quotients, which are affine morphisms from $K$-graded to $0$-graded schemes satisfying a natural condition on their structure sheaves, see Definition~\ref{def:graded-good-quotients}. 
Good quotients behave very naturally in that they respect intersections of closed sets and are surjective with distinguished points in each fibre, see Proposition~\ref{prop:graded-good-quotients}. 
We also introduce the other concepts needed for Theorem~\ref{intro-thm:char-of-graded-char-spaces}, namely $K$-Krull schemes which are the most general objects with well-behaved notions of Weil and principal divisors. 

\begin{definition}
 The {\em $K$-spectrum} of a $K$-graded ring $R$ is the set $X:=\Spec_K(R)$ of $K$-prime ideals of $R$, endowed with the topology whose closed sets are of the form $V(\mathfrak{a}) = \{\mathfrak{p} \in X; \mathfrak{a} \subseteq \mathfrak{p}\}$ with $K$-graded ideals $\mathfrak{a} \trianglelefteq R$.  
 Its $K$-graded structure sheaf $\mathcal{O}_X$ (with $K$-local stalks) is defined on the basis $X_f := X \setminus V(\bangle{f})$ of principal open sets for $f \in R^{+,0}$ by $\mathcal{O}_X(X_f) := R_f$ and on arbitrary open $U \subseteq X$ by
 \[
  \mathcal{O}_X(U) := \varprojlim_{X_f \subseteq U}{\mathcal{O}_X(X_f)} .
 \]
 The pair $(\Spec_K(R),\mathcal{O}_{\Spec_K(R)})$ is the {\em affine $K$-graded scheme} corresponding to $R$. 
 
A {\em graded scheme} is a pair $(X, \mathcal{O}_X)$ consisting of a topological space $X$ and a graded sheaf of rings $\mathcal{O}_X$ that has a cover by affine $gr(\mathcal{O}_X)$-graded schemes $(U, {\mathcal{O}_X}_{|U})$. $(X, \mathcal{O}_X)$ is also called a $gr(\mathcal{O}_X)$-graded scheme. 
 A {\em morphism} of the graded schemes $(X, \mathcal{O}_X)$ and $(X', \mathcal{O}_{X'})$ is a continuous map $\phi : X \rightarrow X'$ together with a morphism of graded sheaves $\phi^* : \mathcal{O}_{X'} \rightarrow \phi_*\mathcal{O}_X$ 
 such that for each $x \in X$ the induced graded homomorphism $\phi_x^*: \mathcal{O}_{X',\phi(x)} \rightarrow \mathcal{O}_{X,x}$ satisfies $\bangle{(\phi_x^*)^{-1}(\mathfrak{m}_x) \cap \mathcal{O}_{X',\phi(x)}^+} = \mathfrak{m}_{\phi(x)}$, where $\mathfrak{m}_x$ and $\mathfrak{m}_{\phi(x)}$ are the respective unique $gr(\mathcal{O}_X)$-/$gr(\mathcal{O}_{X'})$-maximal ideals. 
\end{definition}

Schemes are the same as $0$-graded schemes, they form a full subcategory of the category of graded schemes. 
We will often only write $X$ for the graded scheme $(X, \mathcal{O}_X)$. When talking about a morphism of graded schemes we will also write the continuous map $\phi : X \rightarrow X'$ of the underlying topological spaces in place of the pair $(\phi, \phi^*)$. 

\begin{remark}
For an affine $K$-graded scheme $X = \Spec_K(R)$, the stalk at $\mathfrak{p} \in X$ is the graded localization $R_{\mathfrak{p}}$. Morphisms between affine graded schemes are given by maps of graded rings: If $\phi : X \rightarrow X'$ is a morphism of affine graded schemes, 
then $\phi^*: R' = \mathcal{O}(X') \rightarrow R =\mathcal{O}(X)$ is a graded morphism, and $\phi$ maps the point $\mathfrak{p} \in X$ to the point $\bangle{(\phi^*)^{-1}(\mathfrak{p}) \cap R'^+}$. 
\end{remark}

\begin{example}\label{ex:graded-spec-of-monoid-algebra}
 Let $M$ be an abelian monoid 
 contained in an abelian group $K$ and let $k$ be a field. Let $R:= k[M]$ be the canonically $K$-graded monoid algebra of $M$ over $k$. Recall that a {\em face} of an abelian monoid is a submonoid $\tau \subseteq M$ such that $w + w' \in \tau$ implies $w, w' \in \tau$ for all $w,w' \in M$. Then there is an order reversing bijection
 \begin{align*}
  \faces(M) & \longleftrightarrow \Spec_K(R) =:X  \\
  \tau & \longmapsto \mathfrak{p}_{\tau} := \bangle{\chi^w; w \in M \setminus \tau} \\
  \deg_K(R^+ \setminus \mathfrak{p}) =: \tau_{\mathfrak{p}} & \longmapsfrom \mathfrak{p}
 \end{align*}
where $\Spec_K(R)$ is ordered by inclusion. Furthermore, $\mathcal{O}_X(X_{\chi^w}) = k[M - \ZZ_{\geq 0} w]$ and $\mathcal{O}_{X, \mathfrak{p}} = k[M -\tau_{\mathfrak{p}}]$. If $\psi: K' \rightarrow K$ is a group homomorphism mapping the submonoid $M'$ into $M$, then it induces a graded map $\t{\psi} : R':= k[M'] \rightarrow R$ and a map 
$$\faces(M) \rightarrow \faces(M'), \tau \mapsto \psi^{-1}(\tau) \cap M' .$$
The corresponding map of graded schemes is
\[
 \phi: X \longrightarrow X':=\Spec_{K'}(R'), \quad \mathfrak{p}_{\tau} \longmapsto \mathfrak{p}'_{\psi^{-1}(\tau) \cap M'} .
\]
This consideration links graded schemes to combinatorics when applied to finitely generated monoids. On the other hand, we observe that $\tau \subseteq M$ is a face if and only if $\tau^c \sqcup \{\infty\} \subseteq M \sqcup \{\infty\}$ is a prime ideal in the sense of \cite{De2008}. 
Moreover, a subgroup $\mathfrak{p}$ of a $K$-graded ring $R$ generated by homogeneous elements is a $K$-prime ideal if and only if $R^{+,0} \setminus \mathfrak{p}$ is a face of the multiplicative monoid $R^{+,0}$. 
Thus, there is also a canonical homeomorphism between $\Spec_K(R)$ and the space of prime ideals of $M \sqcup \{\infty\}$ which is a scheme over the field $\mathbb{F}_1$ with one element in the sense of \cite{De2008}. This line of thought is continued in Remark~\ref{rem:toric-graded-schemes}. 
\end{example}

\begin{remark}\label{rem:graded-schemes-subcat-of-cong-schemes}
 The category of graded rings is a subcategory of the category of {\em monoidal pairs} or {\em sesquiads} from \cite{De2013} via the assignment $R = \bigoplus_{w \in K}{R_w} \mapsto (R^{+,0}, R)$. Graded ideals of $R$ correspond to ideals of $(R^{+,0}, R)$ and graded localizations of $R$ correspond to localizations of $(R^{+,0}, R)$. Therefore, the affine graded scheme $(\Spec_K(R), \mathcal{O}_{\Spec_K(R)})$ is naturally identified with the set of prime ideals of $(R^{+,0}, R)$ equipped with the structure sheaf induced by $(R^{+,0}, R)$, and the category of graded schemes becomes a subcategory of the category of {\em sesquiad Zariski schemes} which is obtained by gluing prime spectra of sesquiads. 
 Within this category (non-trivially) graded schemes take an intermediate position between schemes, whose affine charts are given by monoid pairs of the form $(R, R)$, and $\mathbb{F}_1$-schemes, whose affine charts are given by pairs of the form $(M, \mathbb{Z}[M])$. 
\end{remark}

Having introduced the core definitions we will from now on assume that basic concepts like {\em generic points}, open and closed graded {\em subschemes} and {\em embeddings} have been introduced as well, naturally extending the well-known notions for schemes. Furthermore, {\em $K$-reduced} resp. {\em $K$-integral} $K$-graded schemes are defined by the absence of homogeneous nilpotent elements resp. homogeneous zero divisors in all sections.  
For a $K$-integral $K$-graded scheme $X$, the constant sheaf $\mathcal{K}_K$ assigning the stalk $\mathcal{O}_{X,\xi}$ at the generic point is a sheaf of $K$-simple rings. 
A quasi-compact $K$-graded scheme is {\em $K$-noetherian} resp. {\em $K$-Krull} if the sections $\mathcal{O}_X(U)$ are $K$-noetherian resp. $K$-Krull for all (or equivalently, some cover of $X$ by) affine $U$. For a $K$-Krull scheme $X$, a {\em $K$-prime divisor} is a point with one-codimensional closure. 

\begin{remark}
 If $X$ is a $K$-Krull scheme, then the stalks at $K$-prime divisors are $K$-valuation rings. Thus, each $K$-prime divisor $Y$ defines a $K$-valuation $\nu_Y : \mathcal{K}_K^+ \rightarrow \imath_Y(\ZZ)$ and these are the essential $K$-valuations of the $K$-Krull sheaf $\mathcal{O}_X = \bigcap_Y{{\mathcal{K}_K}_{\nu_Y}}$. Therefore, a $K$-graded scheme $X$ is a $K$-Krull scheme if and only if it is quasi-compact and $\mathcal{O}_X$ is a $K$-Krull sheaf. 
\end{remark}

If $X$ is a $K$-Krull scheme, then the direct sum over the skyscraper sheaves $\imath_Y(\ZZ)$ is the presheaf $\WDiv^K$ of {\em $K$-Weil divisors} 
and the direct sum over the $K$-valuations $\nu_Y$ defines a morphism $\div^K : \mathcal{K}_K^+ \rightarrow \WDiv^K$. Its image $\PDiv^K$ is the presheaf of {\em $K$-principal divisors} and its cokernel is the presheaf of {\em $K$-divisor class groups} $\Cl^K$. 
The {\em support} $|D|$ of a Weil divisor $D \in \WDiv(U)$ is the union over all $K$-prime divisors $Y \in U$ occuring with non-zero coefficient in $D$. 

\begin{proposition}
 Let $X$ be a $K$-Krull scheme. Then the stalk of $\mathcal{O}_X$ at $x$ is 
 \[
  \mathcal{O}_{X,x} = \bigcap_{x \in \b{\{Y\}}}{(\mathcal{O}_{X,\xi})_{\nu_{Y,X}}} \subseteq \mathcal{O}_{X,\xi} 
 \]
 where $Y$ runs through all $K$-prime divisors $Y$ containing $x$ in their closure. 
\end{proposition}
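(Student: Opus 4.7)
The plan is to reduce to the affine situation and then invoke the graded analogue of the standard fact that localisations of Krull rings at prime ideals are again Krull, with essential valuations inherited from the ambient ring.

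First, I would pick an affine open neighbourhood $U = \Spec_K(R) \ni x$ and let $\mathfrak{p}_x \trianglelefteq R$ denote the $K$-prime ideal corresponding to $x$. Then $\mathcal{O}_{X,x} = R_{\mathfrak{p}_x}$ by definition of the structure sheaf of an affine $K$-graded scheme, and, since $X$ is $K$-integral, $\mathcal{O}_{X,\xi} = R_{(0)} = Q^+(R)$, so the whole statement can be read inside $Q^+(R)$.

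Second, I would match up the index sets. If $Y$ is a $K$-prime divisor of $X$ with $x \in \overline{\{Y\}}$, then $Y \in U$: otherwise the closed set $X \setminus U$ would contain $Y$ and hence the whole closure $\overline{\{Y\}}$, contradicting $x \in U \cap \overline{\{Y\}}$. Hence such $Y$ correspond bijectively to $K$-prime divisors $\mathfrak{q}_Y \trianglelefteq R$ satisfying $\mathfrak{q}_Y \subseteq \mathfrak{p}_x$, the equivalence of the containment with the specialisation relation being just the order-reversing correspondence between $K$-primes of $R$ and points of $\Spec_K(R)$.

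Third, I would apply the graded Krull-ring fact: for a $K$-Krull ring $R$ and a $K$-prime ideal $\mathfrak{p} \trianglelefteq R$, the graded localisation $R_{\mathfrak{p}}$ is again $K$-Krull, its essential $K$-valuations being exactly the $\nu_{\mathfrak{q}}$ for those $K$-prime divisors $\mathfrak{q}$ of $R$ satisfying $\mathfrak{q} \subseteq \mathfrak{p}$. As already noted in the paper, this is obtained by restricting the classical Krull-ring proof (\cite{Fo1973, LaMCa1971}) to homogeneous elements and graded ideals. Specialised to $\mathfrak{p} = \mathfrak{p}_x$ this yields
\[
\mathcal{O}_{X,x} \;=\; R_{\mathfrak{p}_x} \;=\; \bigcap_{\mathfrak{q}_Y \subseteq \mathfrak{p}_x} R_{\mathfrak{q}_Y} \;=\; \bigcap_{x \in \overline{\{Y\}}} R_{\mathfrak{q}_Y} .
\]

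Finally, I would identify $R_{\mathfrak{q}_Y}$ with $(\mathcal{O}_{X,\xi})_{\nu_{Y,X}}$: both are subrings of $Q^+(R) = \mathcal{O}_{X,\xi}$, and by the proposition characterising essential $K$-valuations of a $K$-Krull ring, $R_{\mathfrak{q}_Y}$ is the $K$-valuation ring of $\nu_{\mathfrak{q}_Y}$, which agrees with $\nu_{Y,X}$ on $\mathcal{K}_K^+$ by the compatibility axiom built into the definition of a $K$-Krull sheaf (the essential $K$-valuations of $\mathcal{O}_X(U)$ agree with the globally given $\nu_{Y,X}$ for $Y \in U$). The hard part is really the third step, but the author has set up the graded Krull framework (bijection between minimal positive $K$-divisors and $K$-prime ideals, $K$-valuation rings equal to graded localisations) so that the translation from the classical Krull-ring argument is essentially mechanical.
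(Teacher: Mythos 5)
Your argument is correct. The paper itself states this proposition without proof, so there is no official argument to match it against; but your route — reduce to an affine chart $U=\Spec_K(R)$, identify $\mathcal{O}_{X,x}$ with the graded localization $R_{\mathfrak{p}_x}$ and $\mathcal{O}_{X,\xi}$ with $Q^+(R)$, match the $K$-prime divisors through $x$ with the minimal non-zero $K$-primes $\mathfrak{q}\subseteq\mathfrak{p}_x$, and then invoke the graded analogue of the classical fact that $R_{\mathfrak{p}}=\bigcap_{\mathfrak{q}\subseteq\mathfrak{p}}R_{\mathfrak{q}}$ with essential valuations inherited from $R$ — is exactly in the spirit of the paper's announced methodology (restricting the classical Krull-ring proofs to homogeneous elements and graded ideals), and every ingredient you cite (stalks of affine graded schemes as graded localizations, $K$-valuation rings of essential $K$-valuations being the graded localizations $R_{\mathfrak{q}}$) is available in Sections 1 and 4. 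It is worth noting that where the paper does prove the parallel stalk formula, namely for divisorial $\mathcal{O}_X$-algebras in Proposition~\ref{prop:divisorial-O_X-algebras}(iv), it argues differently and more sheaf-theoretically: a germ at $x$ is represented by a section over some open $W\ni x$, hence is valuated non-negatively by every $\mu_{Y}$ with $Y\in W$, and conversely an element of the ambient simple ring that is non-negatively valuated by all $Y$ with $x\in\overline{\{Y\}}$ becomes a section over the complement of the finitely many divisors where it has a pole, an open set containing $x$. That argument trades the localization theorem for Krull rings against the finiteness axiom in the definition of a $K$-Krull sheaf; your version is more algebraic and equally valid. The only point you pass over quickly is that every minimal non-zero $K$-prime of $R$ contained in $\mathfrak{p}_x$ really does come from a $K$-prime divisor of the global scheme $X$ (so that the two index sets agree, not merely inject one into the other); this holds because $U$ is open and dense in the irreducible space $X$, so codimension of closures is computed identically in $U$ and in $X$, and you should say so explicitly.
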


Since the category of graded rings over a fixed graded ring has the graded tensor product as a coproduct, the category of graded schemes has fiber products. A graded scheme $X$ is called {\em separated} resp. of {\em affine intersection} if the diagonal morphism $\Delta_X : X \rightarrow X \times X$ is a closed embedding resp. affine. The latter property is equivalent to affineness of the intersection of any two affine opens subsets. %Separated graded schemes are of affine intersection. 

\begin{proposition}\label{prop:divisor-complements}
 In a $K$-Krull scheme $X$ of affine intersection the following hold: 
 \begin{enumerate}
  \item Every open affine $U \subseteq X$ is the complement of a $K$-divisor on $X$.
    \item If $X$ is affine, then $V(f) = |\div^K(f)|$ for every $f \in \mathcal{O}(X)^+$.
 \end{enumerate}
%  \begin{proof}
%   Firstly, we observe that for a $f \in \mathcal{O}(X)^+$ the $K$-prime divisor $Y$ is contained in $V(f)$ if and only if $\nu_Y(f) > 0$. Now, (ii) becomes a special case of (i). We may assume that $X$ is affine. Let $U \subseteq X$ be open and affine. If $U = X$, then the complement is the support of the Weil divisor $0$, so we may suppose $U \neq X$. Then $X \setminus U = V(\mathfrak{a})$ with some proper ideal $\mathfrak{a} \neq 0$ because $U \neq \emptyset$. We claim that $X \setminus U$ contains only finitely many prime divisors. Indeed, for any $0 \neq f \in \mathfrak{a}$ the prime divisors $Y$ in $X \setminus U$ satisfy $\nu_Y(f) > 0$ and hence their number is finite. Let $U'$ be the complement of all closures of prime divisors contained in $X \setminus U$. Then $U \subseteq U'$ and both sets contain the same prime divisors. Since $X$ is a $K$-Krull scheme this implies $\mathcal{O}(U) = \mathcal{O}(U')$. 
%   Now, if $X_g$ is a principal subset contained in $U'$, then $\mathcal{O}(X_g) \rightarrow \mathcal{O}(U_{g_{|U}})$ is injective because all restricition maps on $K$-integral schemes are injective and surjective because $X_g$ and $U_{g_{|U}}$ contain the same $K$-prime divisors. Thus, $X_g \subseteq U$ and we conclude $U = U'$. 
%  \end{proof}
\end{proposition}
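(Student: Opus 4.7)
The plan is to prove (ii) first, as it is a purely algebraic statement about a single affine chart, and then bootstrap it to (i) via a Hartogs-type purity argument. For (ii), fix $X = \Spec_K(R)$ with $R$ a $K$-Krull ring and $f \in R^+$. Using Remark~\ref{rem:val-of-divisors} together with the characterization $\mathfrak{p} \cap R^+ = \nu_{\mathfrak{p}}^{-1}(\ZZ_{>0}) \cap R^+$ of $K$-prime divisors given in Section~\ref{K-Krull-rings}, the set $|\div^K(f)|$ coincides with the union of the closures $\overline{\{Y\}}$ over the finitely many $K$-prime divisors $Y$ containing $f$, which at once yields $|\div^K(f)| \subseteq V(f)$. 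For the reverse inclusion, given $\mathfrak{p} \in V(f)$ I pass to the graded localization $R_\mathfrak{p}$, in which $f/1$ lies in the $K$-maximal ideal and is therefore not a $K$-unit, so $\div^K(f/1) \neq 0$; this produces a $K$-prime divisor $Y'$ of $R_\mathfrak{p}$ containing $f/1$, whose contraction to $R$ is a $K$-prime divisor $Y \subseteq \mathfrak{p}$ with $f \in Y$, giving $\mathfrak{p} \in \overline{\{Y\}} \subseteq |\div^K(f)|$.

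For (i), I cover the quasi-compact scheme $X$ by finitely many affine opens $V_1, \ldots, V_n$. By the affine intersection hypothesis each $U \cap V_i$ is affine, and being quasi-compact it can be written as $D_K(g_{i,1}) \cup \cdots \cup D_K(g_{i,m_i})$ for homogeneous $g_{i,j} \in \mathcal{O}(V_i)^+$. Setting $g_i := g_{i,1} \cdots g_{i,m_i}$ yields $D_K(g_i) \subseteq U \cap V_i$, and consequently part (ii) applied in $V_i$ gives $V_i \setminus U \subseteq V_K(g_i) = |\div^K(g_i)|$. Since every $K$-prime divisor of $X$ lying outside $U$ belongs to some $V_i$ and therefore appears in the finite-support divisor $\div^K(g_i)$, there are only finitely many such $K$-prime divisors $Y_1, \ldots, Y_k$. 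I take $D := Y_1 + \cdots + Y_k$; the inclusion $|D| \subseteq X \setminus U$ is immediate, and only the converse inclusion $X \setminus U \subseteq |D|$ remains.

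This reverse inclusion is the main obstacle and reduces to a purity statement: every point $\mathfrak{p} \in V_i \setminus (U \cap V_i)$ is generalized by some $K$-prime divisor of $V_i$ lying outside $U \cap V_i$. I would argue by contradiction. Supposing every $K$-prime divisor $Y \subseteq \mathfrak{p}$ of $V_i$ is contained in $U \cap V_i$, I form the graded localization $\Spec_K(\mathcal{O}(V_i)_\mathfrak{p})$, whose $K$-prime divisors correspond bijectively to those $K$-prime divisors of $V_i$ contained in $\mathfrak{p}$, and the pullback $W_\mathfrak{p} := (U \cap V_i) \times_{V_i} \Spec_K(\mathcal{O}(V_i)_\mathfrak{p})$. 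Being the base change of the affine morphism $U \cap V_i \hookrightarrow V_i$ to an affine target, $W_\mathfrak{p}$ is an affine open subscheme of $\Spec_K(\mathcal{O}(V_i)_\mathfrak{p})$ that contains every $K$-prime divisor of the latter by hypothesis. The defining intersection formula for a $K$-Krull ring then gives $\mathcal{O}(W_\mathfrak{p}) = \mathcal{O}(V_i)_\mathfrak{p}$, forcing $W_\mathfrak{p} = \Spec_K(\mathcal{O}(V_i)_\mathfrak{p})$ and hence $\mathfrak{p} \in U$, a contradiction. The delicate step is to verify that the $K$-Krull property and the essential $K$-valuations transfer correctly through graded localization and through restriction to an open affine subscheme, so that the intersection formula above is legitimate in the localized setting.
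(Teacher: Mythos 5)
The paper states Proposition~\ref{prop:divisor-complements} without proof, so there is no argument of the author's to compare against; judged on its own, your proof is correct and is the natural one, namely the graded transcription of the classical purity argument showing that the complement of an affine open subset of a Krull scheme of affine intersection is pure of codimension one. In part (ii) the two inclusions are exactly as in the ungraded case: the identity $\mathfrak{p}\cap R^+=\nu_{\mathfrak{p}}^{-1}(\ZZ_{>0})\cap R^+$ gives $|\div^K(f)|\subseteq V(f)$, and for the converse the graded localization $R_{\mathfrak{q}}$ at a $K$-prime $\mathfrak{q}\ni f$ is again a $K$-Krull ring in which $f$ is a homogeneous non-unit, hence is positively valuated by some essential $K$-valuation whose $K$-prime divisor contracts to one of $R$ contained in $\mathfrak{q}$. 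In part (i) the reduction to finitely many candidate divisors via $V_i\setminus U\subseteq V(g_i)$ and the contradiction argument with $W_{\mathfrak{p}}$ are both sound, and you correctly locate where the affine-intersection hypothesis enters: affineness of $W_{\mathfrak{p}}$ is what lets you pass from $\mathcal{O}(W_{\mathfrak{p}})=\mathcal{O}(V_i)_{\mathfrak{p}}$ to $W_{\mathfrak{p}}=\Spec_K(\mathcal{O}(V_i)_{\mathfrak{p}})$, since morphisms of affine graded schemes are determined by their section rings. The step you flag as delicate --- compatibility of the $K$-Krull property and of the essential $K$-valuations with graded localization and with restriction to opens --- is covered by the paper's standing convention that properties of Krull rings carry over by restricting arguments to homogeneous elements and graded ideals, and by the displayed stalk formula $\mathcal{O}_{X,x}=\bigcap_{x\in\overline{\{Y\}}}(\mathcal{O}_{X,\xi})_{\nu_{Y,X}}$ for $K$-Krull schemes; so no genuine gap remains.
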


Next, we introduce good quotients of graded schemes. 

\begin{definition}\label{def:graded-good-quotients}
A morphism from a $K$-graded scheme to a scheme is called {\em $K$-invariant}. 
 A {\em good quotient by $K$} is an affine $K$-invariant morphism $q: X \rightarrow Y$ such that the pullback $\mathcal{O}_Y \rightarrow (q_*\mathcal{O}_X)_0$ is an isomorphism. 
\end{definition}

\begin{proposition}\label{prop:graded-good-quotients}
 If $q: X \rightarrow Y$ is a good quotient, then the following hold: 
 \begin{enumerate}
  \item $q$ is surjective and closed, 
  \item if $A_i \subseteq X, i \in I$ are closed then $q(\bigcap_i{A_i}) = \bigcap_i{q(A_i)}$, 
  \item every preimage $q^{-1}(y)$ contains a unique element which is contained in all closures of elements of $q^{-1}(y)$; and $y$ is a closed point if and only if this element is a closed point. 
 \end{enumerate}
 \begin{proof}
 For closedness we use the fact that $\langle \mathfrak{a}_0 \rangle_R \cap R_0 = \mathfrak{a}_0$ holds for $\mathfrak{a}_0 \trianglelefteq R_0$. 
 For (ii) we use $(\sum_i{\mathfrak{a}_i}) \cap R_0 = \sum_i{(\mathfrak{a}_i \cap R_0)}$. 
 Surjectivity and (iii) follow from the fact that for a prime ideal $\mathfrak{q}$ of $R_0$,there exists a unique $K$-prime $\mathfrak{p}$ that is maximal with $\mathfrak{p} \cap R_0 = \mathfrak{q}$, and $\mathfrak{q}$ is maximal if and only if $\mathfrak{p}$ is $K$-maximal. 
 Indeed, if $\mathfrak{q}$ is maximal then the sum $\mathfrak{m}$ over all $K$-graded ideals $\mathfrak{a}$ of $R$ with $\mathfrak{a} \cap R_0 = \mathfrak{q}$ has the desired properties. The general case reduces to this case via localization. 
 \end{proof}
\end{proposition}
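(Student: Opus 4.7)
The plan is to reduce all three assertions to the affine case and derive them from a single algebraic lemma on the contraction of $K$-prime ideals. Since $q$ is affine, its properties are local on $Y$, so we may assume $Y = \Spec(R_0)$ and $X = \Spec_K(R)$ with $q$ sending a $K$-prime $\mathfrak{p} \trianglelefteq R$ to $\mathfrak{p} \cap R_0 \trianglelefteq R_0$. The central lemma will state: for every prime $\mathfrak{q} \trianglelefteq R_0$, among all $K$-graded ideals of $R$ disjoint from $S := R_0 \setminus \mathfrak{q}$ there is a unique maximum $\mathfrak{m}$; this $\mathfrak{m}$ is $K$-prime with $\mathfrak{m} \cap R_0 = \mathfrak{q}$, and $\mathfrak{q}$ is maximal if and only if $\mathfrak{m}$ is $K$-maximal.

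Existence and uniqueness of the maximum follow immediately from the graded identity $(\sum_i \mathfrak{a}_i) \cap R_0 = \sum_i (\mathfrak{a}_i \cap R_0)$, which implies that the sum of all $K$-graded ideals disjoint from $S$ is itself disjoint from $S$; applied to the ideal $\mathfrak{q} R$ this also yields $\mathfrak{m} \cap R_0 = \mathfrak{q}$. To see that $\mathfrak{m}$ is $K$-prime one adapts the classical argument for maximal ideals disjoint from a multiplicative set: if $a, b$ are homogeneous with $ab \in \mathfrak{m}$ and $a, b \notin \mathfrak{m}$, then $\mathfrak{m} + \langle a \rangle$ and $\mathfrak{m} + \langle b \rangle$ are $K$-graded ideals strictly larger than $\mathfrak{m}$, hence meet $S$; after matching degrees this produces homogeneous $r, u$ of degrees $-\deg(a), -\deg(b)$ with $ar, bu \in S$, whereupon $(ab)(ru) = (ar)(bu)$ lies simultaneously in $S$ and in $\mathfrak{m} \cap R_0 \subseteq \mathfrak{q}$, contradicting $S \cap \mathfrak{q} = \emptyset$. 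The equivalence of maximality follows briefly: if $\mathfrak{q}$ is maximal then any $K$-graded ideal $\mathfrak{m}'$ strictly above $\mathfrak{m}$ must have $\mathfrak{m}' \cap R_0 = R_0$, forcing $\mathfrak{m}' = R$; conversely any prime $\mathfrak{q}' \supsetneq \mathfrak{q}$ lifts via the lemma to a $K$-prime strictly above $\mathfrak{m}$.

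With the lemma, (i) is immediate: existence of $\mathfrak{m}$ gives surjectivity, and closedness comes from the identity $q(V(\mathfrak{a})) = V(\mathfrak{a} \cap R_0)$ for $K$-graded $\mathfrak{a}$, whose nontrivial inclusion applies the lemma to $R/\mathfrak{a}$ together with $\langle \mathfrak{a}_0 \rangle_R \cap R_0 = \mathfrak{a}_0$ to lift any prime of $R_0$ containing $\mathfrak{a} \cap R_0$. For (ii), writing $A_i = V(\mathfrak{a}_i)$ with $\mathfrak{a}_i$ $K$-graded and invoking the sum identity yields
\[
q\Big(\bigcap_i V(\mathfrak{a}_i)\Big) = q\big(V({\textstyle\sum_i} \mathfrak{a}_i)\big) = V\big(({\textstyle\sum_i} \mathfrak{a}_i) \cap R_0\big) = V\big({\textstyle\sum_i} (\mathfrak{a}_i \cap R_0)\big) = \bigcap_i q(V(\mathfrak{a}_i)).
\]
For (iii), the distinguished element of $q^{-1}(y)$ is precisely the $\mathfrak{m}$ associated to $\mathfrak{q}$: every $\mathfrak{p} \in q^{-1}(y)$ satisfies $\mathfrak{p} \subseteq \mathfrak{m}$, i.e.\ $\mathfrak{m} \in \overline{\{\mathfrak{p}\}}$, and the closed-point clause is just the maximality statement of the lemma. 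The step I expect to require the most care is the $K$-primality of $\mathfrak{m}$: although the classical multiplicative-set argument transfers almost verbatim, the degree bookkeeping — ensuring that the cofactors $r, u$ can be genuinely chosen homogeneous of the prescribed opposite degrees so that the final product falls back into $R_0$ — is precisely what separates the graded from the ungraded setting and warrants careful writing.
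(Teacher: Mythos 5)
Your proposal is correct and follows essentially the same route as the paper: both rest on the two contraction identities $\langle \mathfrak{a}_0\rangle_R \cap R_0 = \mathfrak{a}_0$ and $(\sum_i\mathfrak{a}_i)\cap R_0 = \sum_i(\mathfrak{a}_i\cap R_0)$, and on the existence of a unique largest $K$-prime of $R$ lying over a given prime $\mathfrak{q}\trianglelefteq R_0$, obtained as a sum of graded ideals. The only (harmless) difference is that you establish this lemma for arbitrary $\mathfrak{q}$ directly, proving $K$-primality of the maximal graded ideal disjoint from $R_0\setminus\mathfrak{q}$ by the multiplicative-set argument with the degree-$0$-component bookkeeping you flag, whereas the paper treats the maximal case first and reduces the general case by localization.
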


For the $\ZZ$-graded case, properties (i) and (iii) were observed in \cite[Lemma 1.1.2]{Ca2006}. 
A good quotient that is bijective, i.e. a homeomorphism, is called {\em geometric}.
 A well-known example for a graded geometric quotient is the proj construction of a $\ZZ$-graded ring. 

 \begin{example}\label{ex:proj}
  Let $R$ be a $\ZZ$-graded ring with $\deg_{\ZZ}(R^+) \subseteq \ZZ_{\geq 0}$ and consider the proper $\ZZ$-graded ideal $\mathfrak{a} = \bigoplus_{n > 0}{R_n}$. Let $\b{X} := \Spec_{\ZZ}(R)$ and set $\rq{X} := \b{X} \setminus V(\mathfrak{a})$. Then the quotients $\b{X}_f \rightarrow \Spec((R_f)_0)$ for $f \in \mathfrak{a} \cap R^+$ glue to a good quotient $q: \rq{X} \rightarrow {\rm Proj}(R)$ by $\ZZ$ which is even bijective, i.e. geometric. 
  However, unless $R = R_0$, the structure sheaves of $\rq{X}$ and ${\rm Proj}(R)$ will be different. 
 \end{example}

$K$-graded schemes also occur naturally as {\em relative $K$-spectra} of a quasi-coherent $K$-graded sheaves on schemes: 

\begin{construction}
 Let $X$ be a scheme and let $\mathcal{R}$ be a quasi-coherent $K$-graded $\mathcal{O}_X$-algebra. 
Then $\Spec_K(\mathcal{R}(U))$ is open in $\Spec_K(\mathcal{R}(V))$ for any two affine open $U \subseteq V \subseteq X$ and hence the $K$-prime spectra of $\mathcal{R}(U)$ for all affine $U$ glue to a $K$-graded scheme, called 
 the {\em relative $K$-spectrum} $\Spec_{K,X}(\mathcal{R})$ of $\mathcal{R}$, and there is a commutative diagram 
 \[
 \xy
 \xymatrix{
 \Spec_X(\mathcal{R}) \ar[rr]\ar[dr] & & X \\
 & \Spec_{K,X}(\mathcal{R}) \ar[ur]_{q} &  
 }
 \endxy
\]
and $q_*\mathcal{O}_{\Spec_{K,X}(\mathcal{R})} = \mathcal{R}$ holds. If $\mathcal{R}_0 = \mathcal{O}_X$, then $q$ is a good quotient. 
\end{construction}

\begin{remark}\ref{rem:irredundancy}
 Let $L \leq \WDiv(X)$ be any subgroup mapping onto $\Cl(X)$ and let $\t{X} := \Spec_{L, X}(\mathcal{O}_X(L))$. For any Cox sheaf $\mathcal{R}$, every CIE $\pi : \mathcal{O}_X(L) \rightarrow \mathcal{R}$ induces a graded homeomorphism $\rq{X} := \Spec_{\Cl(X), X}(\mathcal{R}) \rightarrow \t{X}$ which is an isomorphism if and only if $L$ maps isomorphically onto $\Cl(X)$ (which in turn can only occur if $\Cl(X)$ is free). 
\end{remark}

\section{Proofs of Theorems~\ref{intro-thm:char-of-graded-char-spaces} and ~\ref{intro-thm:char-of-Cox-rings}}\label{graded-char-spaces}

\begin{proof}[Proof of Theorem~\ref{intro-thm:char-of-graded-char-spaces}]
First suppose that conditions (i)-(iii) hold. Consider the cover of $X$ by all affine open $U$. Then $\rq{X}$ is covered by their preimages $\rq{U}= q^{-1}(U)$ and this cover has a finite subcover. Consequently, $X$ has a finite affine cover and each $\mathcal{O}(U) = \mathcal{O}(\rq{U})_0$ is a Krull ring because $\mathcal{O}(\rq{U})$ is a $K$-Krull ring. Thus, $X$ is a Krull scheme. 
 We show that $\mathcal{R}:= q_* \mathcal{O}_{\rq{X}}$ is a Cox sheaf by applying Theorem~\ref{intro-thm:char-of-Cox-sheaves}. 
 Firstly, since $q$ is affine and hence $\mathcal{R}$ is quasi-coherent, we obtain 
 $$\mathcal{R}_{\xi} = Q^+(q_*\mathcal{O}_{\rq{X}}(U)) = Q^+(\mathcal{O}_{\rq{X}}(q^{-1}(U))) = \mathcal{O}_{\rq{X},\rq{\xi}}$$ 
 for any affine open $U$. Thus, $\mathcal{R}_{\xi}$ is $K$-simple and $\deg_K(\mathcal{R}_{\xi}^+) =  K$. Because $q$ is a good quotient, we also have $(\mathcal{R}_{\xi})_0 = ((q_*\mathcal{O}_{\rq{X}})_0)_{\xi} = \mathcal{O}_{X,\xi}$ and thus the constant sheaf $\mathcal{S} := \underline{\mathcal{R}_{\xi}}$ has all properties asserted in Theorem~\ref{intro-thm:char-of-Cox-sheaves}(i).
 Furthermore, there is a canonical isomorphism $q^* : \mathcal{S} \cong q_*\mathcal{K}_K$. 
 
 For the verification of Theorem~\ref{intro-thm:char-of-Cox-sheaves}(ii), consider a $K$-prime divisor $\rq{Y}$ and its image $Y:=q(\rq{Y})$. Due to (ii), we have $\imath_Y(\ZZ) = q_*\imath_{\rq{Y}}(\ZZ)$ and $\WDiv = q_*\WDiv^K$, and $\mu_Y:= q_*\nu_{\rq{Y}}: \mathcal{S}^+ \rightarrow \imath_Y(\ZZ)$ restricts to $\nu_Y$ on $\mathcal{K}$.  By definition, the family $\{\mu_Y\}_Y$ defines $\mathcal{R}$ as a $K$-Krull sheaf in $\mathcal{S}$. 
 
 For Theorem~\ref{intro-thm:char-of-Cox-sheaves}(iii), we first observe that $\div_{K,X} := \sum_Y{\mu_{Y,X}}= q_*\div_X^K$ is surjective because $\div_{\rq{X}}^K$ is so, due to $\Cl^K(\rq{X}) =0$. Furthermore, we have $\mathcal{R}(X)^{+,*} = \mathcal{R}(X)_0^*$ and thus Theorem~\ref{intro-thm:char-of-Cox-sheaves} implies that $\mathcal{R}$ is a Cox sheaf on $X$. 
 
 Now suppose that $\mathcal{R}$ is a Cox sheaf and $q : \rq{X} = \Spec_{K,X}(\mathcal{R}) \rightarrow X$ its characteristic space. Then the canonical isomorphism $q^*: \mathcal{R} \cong q_*\mathcal{O}_{\rq{X}}$ induces an isomorphism $q^*: \mathcal{S}\cong q_*\mathcal{K}_K$ as in the first part of the proof. By construction the $K$-spectra $\rq{U}$ of $\mathcal{R}(U)$ for affine $U \subseteq X$ form a cover of $\rq{X}$ by affine $K$-Krull schemes. 
 Since $\{\mu_Y\}_Y$ are the essential $K$-valuations of $\mathcal{R}$ and they restrict to $\{\nu_Y\}_Y$, there are natural bijections 
 \[
  \alpha: \bangle{\mu_{Y,U}^{-1}(\ZZ_{>0}) \cap \mathcal{O}(\rq{U})^+} \mapsto  \bangle{\nu_{Y,U}^{-1}(\ZZ_{>0}) \cap \mathcal{O}(U)} = Y
 \]
 between the $K$-prime divisors of $\rq{U} = \Spec_K(\mathcal{R}(U))$ and the prime divisors of $U$. 
Because $g$ is affine, these glue to an isomorphism 
$$\alpha : \WDiv^K(\rq{X}) \rightarrow \WDiv(X), \rq{Y} \mapsto q(\rq{Y})$$ 
which in turn induces an isomorphisms of presheaves $\beta: q_*\WDiv^K \cong \WDiv$ and $\beta_{\rq{Y}} :  q_*\imath_{\rq{Y}}(\ZZ) \cong \imath_{q(\rq{Y})}(\ZZ)$. By construction, we have $\beta_{\rq{Y}} \circ q_*\nu_{\rq{Y}} \circ q^* = \mu_{q(\rq{Y})}$ which gives $\beta \circ q_*\div^K \circ q^* = \div_K$,  the first supplement.
In particular, 
$\beta \circ q_*\div^K \circ q_{|\mathcal{K}^*}^* = \div$ holds and assertions (i)-(iii) are verified.

For the second supplement, consider a $K$-prime divisor $\rq{Y}$ and set $Y := q(\rq{Y})$. We claim that $q(\b{\{\rq{Y}\}}) = \b{\{Y\}}$. Indeed, $q(\b{\{\rq{Y}\}})$ is closed, irreducible and contains $\b{\{Y\}}$. The other inclusion follows because $\b{\{Y\}}$ has codimension one and $q(\b{\{\rq{Y}\}})$ is a proper subset of $X$. 
For any point $x \in X$ let $\rq{x} \in \rq{X}$ be the unique point contained in all closures of points mapped to $x$. Firstly, we claim that $\rq{x} \in \b{\{\rq{Y}\}}$. If $\rq{x} \in \b{\{\rq{Y}\}}$ then $x \in Y$ by the previous considerations. Conversely, if $x \in \b{\{Y\}} = q(\b{\{\rq{Y}\}})$ then $\b{\{\rq{Y}\}}$ contains a point $z$ with $q(z) = x$. By definition, $\rq{x} \in \b{\{z\}} \subseteq \b{\{\rq{Y}\}}$. 
Thus, we obtain
\[
 \mathcal{R}_x = \bigcap_{x \in \b{\{Y\}}}{\mathcal{S}(X)_{\mu_{Y,X}}} = \bigcap_{\rq{x} \in \b{\{\rq{Y}\}}}{{\mathcal{K}_K(\rq{X})}_{\nu_{\rq{Y},\rq{X}}}} = \mathcal{O}_{\rq{X},\rq{x}} 
\]
in $\mathcal{R}_{\xi} = \mathcal{O}_{\rq{X},\rq{\xi}}$. In particular, for $Y = x$ the point $\rq{x}$ lies in the closure of $\{\rq{Y}\}$ and $\mathcal{O}_{\rq{X},\rq{x}} = \mathcal{R}_Y = \mathcal{O}_{\rq{X},\rq{Y}}$ which implies $\rq{x} = \rq{Y}$. 
\end{proof}

The following Lemma was needed to show graded locality of the stalks $\mathcal{O}_X(L)_x$ and $\mathcal{R}_x$ in the respective proofs. 

\begin{lemma}\label{lem:div_K-and-sums}
 Let $\mathcal{R}$ be a Cox sheaf on $X$ and let $f, f' \in \mathcal{R}(U)_{[D]}$ with $f + f' \neq 0$. Then 
 \[
  |\div_{K,U}(f)|\cap |\div_{K,U}(f')| \subseteq |\div_{K,U}(f+f')| .
 \]
Accordingly, for $g, g' \in \mathcal{O}_X(D)(U)$ with $g + g' \neq 0$ we have
 \[
  |\div_U(g) + D| \cap |\div_U(g') + D| \subseteq |\div_U(g +g') + D| .
 \]
\begin{proof}
 It suffices to consider the case that $U$ is affine. Using Proposition~\ref{prop:divisor-complements} we calculate 
 \begin{align*}
  |\div_{K,U}(f)| \cap |\div_{K,U}(f')| & = q(|\div_{q^{-1}(U)}^K(q^*(f))|) \cap q(|\div_{q^{-1}(U)}^K(q^*(f'))|) \\
 & = q(|\div_{q^{-1}(U)}^K(q^*(f))| \cap |\div_{q^{-1}(U)}^K(q^*(f'))|) \\
 & = q(V_{q^{-1}(U)}(q^*(f)) \cap V_{q^{-1}(U)}(q^*(f'))) \\
 & \subseteq q(V_{q^{-1}(U)}(q^*(f+f')))  = |\div_{K,U}(f+f')| .
 \end{align*}
\end{proof}
\end{lemma}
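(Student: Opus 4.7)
The plan is to reduce to an affine base $U$ and then transport the statement to the graded characteristic space $\rq X := \Spec_{K,X}(\mathcal R)$, where supports of principal divisors become honest vanishing loci that behave well under the good quotient $q:\rq X\to X$. Since the asserted containment can be verified locally on an affine cover of $U$ (both sides commute with restriction to opens, via $\div_{K,-}$ being a morphism of presheaves), we may and do assume $U$ is affine. Set $\rq U := q^{-1}(U)$, which is an affine $K$-Krull graded subscheme of $\rq X$, and write $\t h := q^*(h) \in \mathcal O(\rq U)$ for $h \in \mathcal R(U)$, using the canonical identification $\mathcal R(U) = \mathcal O(\rq U)$ provided by Theorem~\ref{intro-thm:char-of-graded-char-spaces}.

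The key translation is the identity $|\div_{K,U}(h)| = q(|\div^K_{\rq U}(\t h)|)$ for every nonzero homogeneous $h\in\mathcal R(U)^+$, which I plan to extract from the extended commutative diagram in Theorem~\ref{intro-thm:char-of-graded-char-spaces} together with the bijection $\rq Y\mapsto q(\rq Y)$ between $K$-prime divisors of $\rq X$ and prime divisors of $X$. Since $\rq U$ is an affine $K$-Krull scheme, Proposition~\ref{prop:divisor-complements}(ii) further identifies $|\div^K_{\rq U}(\t h)|$ with the vanishing locus $V_{\rq U}(\t h)$. Invoking Proposition~\ref{prop:graded-good-quotients}(ii), which states that good quotients commute with intersections of closed sets, together with the trivial containment $V_{\rq U}(\t f)\cap V_{\rq U}(\t f')\subseteq V_{\rq U}(\t f+\t f')$ (any $K$-prime containing $\t f$ and $\t f'$ contains their sum), the computation reads
\begin{align*}
|\div_{K,U}(f)|\cap|\div_{K,U}(f')|
 &= q\bigl(V_{\rq U}(\t f)\bigr) \cap q\bigl(V_{\rq U}(\t f')\bigr) \\
 &= q\bigl(V_{\rq U}(\t f)\cap V_{\rq U}(\t f')\bigr) \\
 &\subseteq q\bigl(V_{\rq U}(\t f+\t f')\bigr) = |\div_{K,U}(f+f')|.
\end{align*}

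The main obstacle is establishing the identity $|\div_{K,U}(h)| = q(|\div^K_{\rq U}(\t h)|)$ cleanly: unwinding definitions, one must check that the nonzero coefficients of $\div^K_{\rq U}(\t h)$ occur precisely at $K$-prime divisors $\rq Y$ of $\rq U$ with $\nu_{\rq Y}(\t h)>0$, and that under the aforementioned bijection their closures $\b{\{\rq Y\}}$ map onto the closures $\b{\{Y\}}$ of the corresponding prime divisors of $U$ contributing to $\div_{K,U}(h)$. For the ``accordingly'' part of the statement, the same argument applies verbatim after replacing the Cox sheaf $\mathcal R$ and its characteristic space by the divisorial algebra $\mathcal O_X(\WDiv(X))$ and its graded relative spectrum, invoking Proposition~\ref{prop:divisorial-O_X-algebras} in place of Theorem~\ref{intro-thm:Cox-sheaf-props}; alternatively, one may deduce it from the Cox-sheaf case via any CIE $\pi:\mathcal O_X(\WDiv(X))\to\mathcal R$, using the identity $\div_{K,U}(\pi_U(g\chi^D)) = \div_U(g) + D_{|U}$ to transfer the inclusion.
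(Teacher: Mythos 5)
Your proposal is correct and follows essentially the same route as the paper's own proof: reduce to affine $U$, transport to the characteristic space via $q$, identify supports of $K$-principal divisors with vanishing loci using Proposition~\ref{prop:divisor-complements}(ii), and conclude via the fact that good quotients commute with intersections of closed sets (Proposition~\ref{prop:graded-good-quotients}(ii)) together with the trivial containment $V(\t f)\cap V(\t f')\subseteq V(\t f+\t f')$. The only difference is presentational: you isolate the translation identity $|\div_{K,U}(h)| = q(|\div^K_{q^{-1}(U)}(q^*h)|)$ as a step to be checked and treat the ``accordingly'' part explicitly, both of which the paper leaves implicit.
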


\begin{proof}[Proof of Theorem~\ref{intro-thm:char-of-Cox-rings}]
First, let $R = \mathcal{R}(X)$ be the Cox ring of $X$. Then (i), (ii) and (iii) follow from Theorem~\ref{intro-thm:char-of-Cox-sheaves}. For assertion (iv) we additionally suppose that $X$ has an affine cover by complements of Weil divisors. Let $\mathfrak{p}$ be a $K$-prime divisor. By $K$-factoriality we have $\mathfrak{p} = \bangle{f}$ with some $K$-prime $f \in R$. Then $Y := \div_{K,X}(f)$ is contained in some affine open set $U = X \setminus |D|$ with some effective Weil divisor $D$. Let $\mathcal{O}_X(L)$ map onto $\mathcal{R}$. Then there exists $g \in \mathcal{O}_X(L)(X)$ with $D = \div_{L,X}(g) = \div_{K,X}(\pi_X(g))$. Let $h := \pi_X(g)$. Since $\div_{K,X}(f) \nleq \div_{K,X}(h)$, $f$ does not divide $h$ which means $h \notin \mathfrak{p}$. Hence, $R_{\mathfrak{p}} = (R_h)_{\mathfrak{p}}$ and the second ring has units in every $K$-degree by Theorem~\ref{thm:alg-divisorial-algebras} (iii) and Proposition~\ref{prop:compw-iso-epis} (iv). 

Now, let $K$ be finitely generated, and $R$ an algebraic Cox ring, i.e. a $K$-graded ring satisfying conditions (i), (ii) and (iv). We claim that there is a set of $K$-primes $f_1,\ldots,f_r$ such that $\bangle{\deg_K(f_j); j\neq k} = K$ for every $k=1,\ldots,r$.  
 Since $K$ is finitely generated, there are pairwise non-associated $K$-primes $f_1,\ldots, f_m$ with $\bangle{\deg_K(f_1),\ldots,\deg_K(f_m)}  = K$. The localization $R_{\bangle{f_1}}$ has units in every degree by (iv), so there are fractions $g_1/h_1,\ldots, g_n/h_n$, where none of the $g_j, h_j$ are divisible by $f_1$, whose degrees together generate $K$. Decomposing gives $f_{m+1},\ldots, f_t$ such that $f_1,\ldots,f_t$ are pairwise non-associated $K$-primes and $\bangle{\deg_K(f_2),\ldots,\deg_K(f_t)}= K$. Proceeding in this way for $k=2,\ldots,m$, we arrive at a set $f_1,\ldots,f_r$ with the requested properties. 
 
 For $j=1,\ldots,r$ let $R_j$ be the localization by the product of all $f_k$ with $k\neq j$. 
 Let $\rq{X}$ be the $K$-graded scheme with affine charts $\rq{X}_j:=\Spec_K(R_j)$. 
 Then by choice of $f_1,\ldots,f_r$ all $X_j = \Spec((R_j)_0)$ contain $X' = \Spec((R_{f_1\cdots f_r})_0)$ as a principal open subset and thus glue to a scheme $X$. The maps $\rq{X}_j \rightarrow X_j$ glue to a good quotient $q: \rq{X} \rightarrow X$ by $K$. 
 
 We verify that $R$ is the Cox ring of $X$ by showing that $q$ is a graded characteristic space. $\rq{X}$ is a $K$-Krull scheme because every $R_j$ is a $K$-Krull ring (they are even $K$-factorial).  
 Each $\rq{X}_j$ contains all $K$-prime divisors of $\b{X}:=\Spec_K(R)$ except the $K$-principal divisors of those $f_k$ with $k \neq j$. Thus, $\rq{X}$ contains every $K$-prime divisor of $\b{X}$, which implies $\mathcal{O}(\rq{X}) = R$ and $\mathcal{O}(\rq{X})^{+,*} = \mathcal{O}(\rq{X})_0^*$ by (ii). 
 Moreover, $\mathcal{K}^K(\rq{X}) = \mathcal{K}^K(\rq{X}_j) = Q^+(R)$ and hence $\Cl^K(\rq{X}) = \Cl(R) =0$ by (i).
 
 By (iv), each $R_j$ has units of every $K$-degree. Firstly, this yields $Q^+(R_j)_0 = Q((R_j)_0)$ and $\deg_K(R_j) = K$. There $q^* : \mathcal{K} \rightarrow (\mathcal{K}_K)_0$ is an isomorphism and $\deg_K(\mathcal{K}_K^+) = K$. 
 Secondly, there are bijections respecting sums, products, intersections and inclusions between the $K$-graded ideals of $R_j$ and the ideals of $(R_j)_0$. 
 In particular, there is an isomorphism $\WDiv^K(\rq{X}_j) \rightarrow \WDiv(X_j), \rq{Y} \mapsto g(\rq{Y})$ and for every $K$-prime divisor $\rq{Y} \in \rq{X}_j$ the generator of the maximal ideal of $\mathcal{O}_{X_j, q(\rq{Y})}$ also generates the $K$-maximal ideal of $\mathcal{O}_{\rq{X}_j,\rq{Y}}$. 
 Consequently, the induced isomorphism $\alpha: \WDiv^K(\rq{X}) \rightarrow \WDiv(X)$ satisfies $\alpha \circ \div^K \circ q^* = \div$.
Thus, Theorem~\ref{intro-thm:char-of-graded-char-spaces} gives the assertion. 
\end{proof}

\begin{remark}\label{rem:irredundancy}
 Conditions (i), (ii) and (iv) irredundantly characterize Cox rings of Krull schemes with affine cover by divisor complements and finitely generated class group. Indeed, \cite{Be2012} offers examples of rings satisfying (i) and (ii) but not (iv). Also, one may take any $K$-graded Cox ring $R$ and extend the grading trivially to a $K \oplus \ZZ$-grading. The monoid $R^+$ stays the same and its units remain in degree $0$, but $\deg_K(R^+) = K \oplus 0 \neq K \oplus \ZZ$, so the units of $R_{\mathfrak{p}}$ cannot attain degrees outside of $K \oplus 0$. 
 
 Examples with (i) and (iv) but not (ii) are obtained in Example~\ref{ex:A(K,phi)} whenever $\Cl(A)$ is not free and $\phi:= \id_{Div(A)}$ is chosen. 
 Examples with (ii) and (iv) but not (i) are also obtained from Example~\ref{ex:A(K,phi)} whenever $\Cl(A)$ is free and $\phi$ is the inclusion of a subgroup $K \subset Div(A)$ which maps injectively but not surjectively to $\Cl(A)$.
\end{remark}

\section{graded schemes and diagonalizable actions}\label{graded-schemes-and-diagonalizable-actions}

The defining algebraic data of a graded scheme $(X, \mathcal{O}_X)$ also define a scheme: 
By equipping each $\mathcal{O}_X(U)$ with the trivial $0$-grading, one obtains a $0$-graded quasi-coherent $\mathcal{O}_X$-algebra $\mathcal{O}_X^{(0)}$ whose $0$-graded relative spectrum $X^{(0)} := \Spec_X(\mathcal{O}_X^{(0)})$ is a scheme. The canonical affine morphism $X^{(0)} \rightarrow X$ which on affine charts is given as $\Spec(R) \rightarrow \Spec_K(R), \mathfrak{p} \mapsto \bangle{\mathfrak{p} \cap R^+}$ is surjective. 
 
 In this section, let $\KK$ be an algebraically closed field. 
 We will show that the functor $\mathfrak{f}: X \mapsto X^{(0)}$ induces an equivalence between the category $\mathcal{A}$ of graded reduced schemes $X$ of finite type over $\KK$ with finitely generated grading groups $gr(X)$ and the category $\mathcal{B}$ of prevarieties over $\KK$ with quasi-torus actions admitting affine invariant covers. 
 
\begin{definition}
Let $Z$ be a prevariety over $\KK$ with the action of a diagonalizable group $H:=\Spec_{\max}(\KK[K])$. Then $Z$ has an induced {\em $H$-invariant topology} $\Omega_{Z,H}$ consisting of the $H$-invariant Zariski open sets. The $K$-graded sheaf of rings obtained by restricting $\mathcal{O}_Z$ to $\Omega_{Z,H}$ is denoted $\mathcal{O}_{Z,H}$ and called the {\em $H$-invariant structure sheaf}. 
\end{definition}

If $Z$ is affine, then the $\Omega_{Z,H}$-closed (and $\Omega_{Z,H}$-irreducible) subsets are precisely the Zariski closed sets with $K$-homogeneous ($K$-prime) vanishing ideal. Equivalently, a subset of $Z$ is $\Omega_{Z,H}$-closed and $\Omega_{Z,H}$-irreducible if it is $\Omega_Z$-closed and its $\Omega_Z$-irreducible components are permuted by $H$.

\begin{proposition}
Let $\mathfrak{t}$ denote the functor sending a ringed space to its space of closed irreducible subsets with induced structure sheaf. 
Let $\mathfrak{g}$ be the equivalence from schemes of finite type over $\KK$ to prevarieties over $\KK$. 
Then we have an equivalence of categories
\begin{align*}
 \mathcal{A} & \longrightarrow \mathcal{B} \\
 \mathfrak{r}: X & \longmapsto \mathfrak{g}(\mathfrak{f}(X)) = \Spec_{\max, X}(\mathcal{O}_X^{(0)}) \\
 \mathfrak{t}(Z, \Omega_{Z,H},\mathcal{O}_{Z,H}) & \longmapsfrom [H \times Z \rightarrow Z] : \mathfrak{s} 
\end{align*}
Here $\mathfrak{r}(X)$ comes with the action by $\Spec_{\max}(\KK[gr(X)])$ which is induced on affine charts by the map $R \rightarrow \KK[gr(X)] \otimes_{\KK} R, f_w \mapsto \chi^w \otimes f_w$. 
\end{proposition}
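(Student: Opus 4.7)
The plan is to establish the equivalence by working locally on affine charts, where the correspondence reduces to the familiar dictionary between $K$-graded finitely generated reduced $\KK$-algebras and affine $\KK$-varieties with an action of the quasi-torus $H = \Spec_{\max}(\KK[K])$, and then gluing coherently. The central algebraic fact to deploy is: for such an algebra $R$, the map $\mathfrak{p} \mapsto V(\mathfrak{p})$ is a bijection between $\Spec_K(R)$ and the set of $\Omega_{\Spec_{\max}(R),H}$-irreducible closed subsets of $\Spec_{\max}(R)$. This follows because reducedness makes $H$-invariant closed subsets correspond to $K$-graded radical ideals, and $H$-invariant irreducibility of $V(\mathfrak{a})$ is equivalent to $K$-primeness of $\mathfrak{a}$ (the $\Omega_Z$-irreducible components of an $H$-invariant closed set are permuted transitively by $H$ precisely when the defining graded ideal has no proper homogeneous decomposition).

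First I would verify that $\mathfrak{r}$ lands in $\mathcal{B}$: an affine chart $\Spec_K(R)$ of $X \in \mathcal{A}$ gives an affine variety $\Spec_{\max}(R)$ with an $H$-action induced by the described coaction $R \to \KK[gr(X)] \otimes_{\KK} R$. These affine pieces glue along common open subsets $\Spec_K(R_f) = \Spec_K(R) \setminus V(\bangle{f})$ (for $f$ homogeneous) to $H$-invariant affine opens $\Spec_{\max}(R_f)$ in $\mathfrak{r}(X)$, providing an affine invariant cover. Symmetrically, for $(Z, H) \in \mathcal{B}$ with an invariant affine cover $\{V_i = \Spec_{\max}(A_i)\}$, each $A_i$ is a finitely generated reduced $K$-graded $\KK$-algebra (with $K$ the character group of $H$, finitely generated since $H$ is a quasi-torus), and the above bijection identifies the $\Omega_{V_i,H}$-irreducible closed subsets of $V_i$ with $\Spec_K(A_i)$; these glue to $\mathfrak{s}(Z)$, whose structure sheaf on the chart matches $\mathcal{O}_{\Spec_K(A_i)}$ on principal opens by construction.

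Next I would construct the natural isomorphisms $\mathfrak{s} \circ \mathfrak{r} \cong \id_{\mathcal{A}}$ and $\mathfrak{r} \circ \mathfrak{s} \cong \id_{\mathcal{B}}$. On an affine chart $\Spec_K(R)$, applying $\mathfrak{r}$ gives $\Spec_{\max}(R)$ with its $H$-action; applying $\mathfrak{s}$ recovers the $H$-invariant irreducible closed subsets, which by the key bijection reproduce $\Spec_K(R)$ together with its graded structure sheaf. The reverse composition is analogous. Functoriality on morphisms reduces, after localization, to the one-to-one correspondence between degree-preserving graded ring homomorphisms $R' \to R$ (with accompanying $\psi: gr(R') \to gr(R)$) and $H$-equivariant $\KK$-algebra morphisms $\Spec_{\max}(R) \to \Spec_{\max}(R')$ (with accompanying quasi-torus homomorphism $H \to H'$), using that $\Hom(H', H)$ equals $\Hom(gr(X'), gr(X))$ for diagonalizable groups.

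The main obstacle will be aligning the two notions of morphism: in $\mathcal{A}$, the locality condition $\bangle{(\phi_x^*)^{-1}(\mathfrak{m}_x) \cap \mathcal{O}_{X',\phi(x)}^+} = \mathfrak{m}_{\phi(x)}$ on stalks must be shown to correspond exactly to the condition that an $H$-equivariant morphism of prevarieties is well-defined on closed points, and the gluing of local data across charts with possibly different grading groups (via the accompanying homomorphism $\psi$) must be handled with care to ensure the constructed natural transformations are genuine isomorphisms of ringed spaces rather than merely bijections of underlying sets.
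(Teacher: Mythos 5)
The paper states this proposition without proof, so there is no argument to compare against; your outline is the natural (and evidently intended) one, resting on exactly the fact the paper records just before the statement, namely that for affine $Z$ the $\Omega_{Z,H}$-closed (resp.\ $\Omega_{Z,H}$-irreducible) subsets are the Zariski-closed sets with $K$-homogeneous (resp.\ $K$-prime) vanishing ideal, together with affine-chart gluing. The ``main obstacle'' you flag is less of one than you fear: by the paper's remark following the definition of graded schemes, morphisms between affine graded schemes are precisely comorphisms of graded ring maps (the stalk condition is automatic for such), so the morphism correspondence reduces to the standard antiequivalence between finitely generated abelian groups and diagonalizable groups over the algebraically closed field $\KK$, exactly as you sketch.
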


\begin{remark}\label{remark:orbit-closures}
Let $Z$ be an affine $H$-variety. Then $H$-orbits correspond naturally to $K$-prime ideals of $R := \mathcal{O}(Z)$ of the form $\bangle{\mathfrak{m} \cap R^+}$ where $\mathfrak{m}$ is a maximal ideal of $R$. Let $X:= \Spec_K(R)$ and for a point $z \in Z$ set $S_z :=  \deg_K(R^+\setminus I(H z))$.
The set $V_X(I(H z))$ of $K$-prime ideals containing $I(H z)$ fits into the known correspondence (e.g. \cite[Prop. 3.8]{Ha2008}) between the orbits contained in $\b{H z}\cong \Spec_{\max}(\KK[S_z])$ and the faces of $S_z$ in the following way: 
\[
\begin{array}{r@{\hspace{3pt}}c@{\hspace{3pt}}c@{\hspace{3pt}}c@{\hspace{3pt}}l}
 {\rm orbits}(\b{H z}) & \longleftrightarrow & V_X(I(H z)) & \longleftrightarrow & \faces(S_z) \\
 H z_0 & \longmapsto & I(H z_0) ,\;\;\; \mathfrak{p} & \longmapsto & \deg_K(R^+ \setminus \mathfrak{p}) \\
 O_{\mathfrak{p}} & \longmapsfrom & \mathfrak{p} ,\quad \quad \quad \mathfrak{p}_{\tau} & \longmapsfrom & \tau 
\end{array}
\]
where $O_{\mathfrak{p}} := V_Z(\mathfrak{p}) \setminus \bigcup_{\mathfrak{p} \neq \mathfrak{q} \in V_X(\mathfrak{p})}{V_Z(\mathfrak{q})}$ and $\mathfrak{p}_{\tau} := I(H z) + \sum_{w \in S_z \setminus \tau}{R_w}$. 
The above bijections are order-reversing, where the order on orbits is defined as $H z_0 < H z_1 : \Leftrightarrow H z_0 \subseteq \b{H z_1}$ and the other sets are ordered by inclusion. 
\end{remark}

For a $\Omega_{Z,H}$-closed-irreducible subset $Y \subseteq Z$ the stalk of $\mathcal{O}_{Z,H}$ at $Y$ is defined as 
\[
 (\mathcal{O}_{Z,H})_Y := \varinjlim_{\substack{U \in \Omega_{Z,H} \\ U \cap Y \neq \emptyset}}{\mathcal{O}_Z(U)} .
\]
It coincides with the stalk of $\mathcal{O}_{\mathfrak{s}(Z)}$ at the point $Y \in \mathfrak{s}(Z)$. 
If $Z$ is $\Omega_{Z,H}$-irreducible then we denote by $\mathcal{K}_H$ the constant sheaf assigning the stalk at $Z$.

\begin{remark}\label{rem:inv-stalks-and-free-actions}
 $\Omega_{Z,H}$ has a basis of affine $H$-invariant open sets if and only if $Z \in \mathcal{B}$ (e.g. $Z$ allows a good quotient by $H$). 
In this case, the following statements hold: 
 \begin{enumerate}
  \item The stalk $(\mathcal{O}_{Z,H})_Y$ at $Y$ coincides with the graded localization  $\mathcal{O}(U)_{I(Y)}$ for every affine invariant open set $U$ meeting $Y$. In particular, if $Z$ is $\Omega_{Z,H}$-irreducible, then $\mathcal{K}_H$ is a $K$-simple sheaf. 
  \item The generic isotropy group of an $\Omega_{Z,H}$-closed $\Omega_{Z,H}$-irreducible set $Y$ is $\Spec_{\max}(\KK[K / \deg_K((\mathcal{O}_{Z,H})_Y^{+,*})])$. In particular, $H$ acts freely on a big open subset of $Z$ 
  if and only if $H$ acts freely on non-empty open subsets of all $H$-prime divisors, i.e. 
  if and only if the stalks $(\mathcal{O}_{Z,H})_Y$ at all $H$-prime divisors have units in every degree. 
 \end{enumerate}
 
\end{remark}

The graded schemes in $\mathcal{A}$ are of finite type, in particular $K$-noetherian, hence they have a cover by $K$-spectra of $K$-Krull rings if and only if they are {\em $K$-normal}, i.e. they have a cover by $K$-spectra of $K$-normal rings. Correspondingly, a $H$-prevariety $Z \in \mathcal{B}$ is called {\em $H$-normal} if and only if the sections of $\mathcal{O}_{Z,H}$ over affine invariant subsets are $\Chi(H)$-normal. A {\em $H$-prime divisor} is a $\Omega_{Z,H}$-closed $\Omega_{Z,H}$-irreducible subset $Y$ which is maximal among the proper subsets of $Z$ with these properties. An equivalent condition is that $Y$ is closed and the $\Omega_Z$-irreducible components of $Y$ are $1$-codimensional and are permuted by $H$, compare \cite[Sect. I.6.4]{ArDeHaLa}. 
Each $H$-prime divisor $Y$ on a $H$-normal prevariety $Z$ defines a discrete value sheaf $\imath_Y(\ZZ)$ on $\Omega_{Z,H}$, which takes values $\ZZ$ if $U$ intersects $Y$ and $0$ otherwise, and a discrete $K$-valuation $\nu_Y : \mathcal{K}_H^+ \rightarrow \imath_Y(\ZZ)$. These define the $K$-Krull sheaf $\mathcal{O}_{Z,H} = \bigcap_Y{(\mathcal{K}_H)_{\nu_Y}} \subseteq \mathcal{K}_H$. Their sum defines a morphism 
\[
 \div^H := \sum_Y{\nu_Y} : \mathcal{K}_H^+ \rightarrow \WDiv^H := \bigoplus_Y{\imath_Y(\ZZ)}
\]
to the presheaf of $H$-Weil divisors. Image and cokernel presheaf are the {\em $H$-principal divisors} $\PDiv^H$ resp. the {\em $H$-class group} $\Cl^H$. 
In this terminology, Theorem~\ref{intro-thm:char-of-graded-char-spaces} translates into the following characterization of characteristic spaces $\Spec_Z(\mathcal{R}) \rightarrow Z$ of Cox sheaves of finite type.

\begin{theorem}\label{thm:char-of-char-spaces-of-prevars}
 Let $q: \rq{Z} \rightarrow Z$ be a $H$-invariant morphism of prevarieties. 
 Then $Z$ is normal and $q$ is a characteristic space 
 if and only if the following hold:
 \begin{enumerate}
  \item $\rq{Z}$ is $H$-normal with $\deg_K(\mathcal{K}_H(\rq{Z})^+) = K$, 
  \item $q$ is a good quotient and induces a commutative diagram of presheaves %on $X$
  \[
   \xy
   \xymatrix
   {
   \mathcal{K}^* \ar[r]^-{\div} \ar[d]^-{\cong}_-{q^*}  &  \WDiv \\
   (q_*\mathcal{K}_H)^*_0 \ar[r]^-{q_*\div^H} & q_*\WDiv^H \ar[u]_-{\substack{\; \\ \rq{Y} \mapsto q(\rq{Y}) }}^-{\cong}
   }
   \endxy
  \]
  \item $\Cl^H(\rq{Z}) = 0$, and $\mathcal{O}(\rq{Z})^{+,*} = \mathcal{O}(\rq{Z})_0^*$. 
\end{enumerate}
If $\rq{Z} = \Spec_Z(\mathcal{R})$ with a Cox sheaf $\mathcal{R}$ then with $\div_K := \sum_Y{\mu_Y}$ the following commutative diagram extends the diagram of (ii): 
  \[
   \xy
   \xymatrix
   {
   \mathcal{S}^+ \ar@{->>}[r]^-{\div_K} \ar[d]^-{\cong}_-{q^*}  &  \WDiv \\
   q_*\mathcal{K}_H^+ \ar@{->>}[r]^-{q_*\div^H} & q_*\WDiv^H \ar[u]_-{\substack{\; \\ \rq{Y} \mapsto q(\rq{Y}) }}^-{\cong}
   }
   \endxy
  \]
  Each prime divisor $Y$ is the image $q(\rq{Y})$ of a uniqe $H$-prime divisor $\rq{Y}$. 
  If $H\rq{z} \subseteq \rq{Z}$ is the unique closed orbit in $q^{-1}(z)$, then $H\rq{z} \subseteq \rq{Y}$ if and only if $z \in Y$. In particular, $(\mathcal{O}_{\rq{X},H})_{H\rq{z}} = \mathcal{R}_z$.
\end{theorem}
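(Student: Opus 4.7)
The plan is to reduce Theorem~\ref{thm:char-of-char-spaces-of-prevars} directly to Theorem~\ref{intro-thm:char-of-graded-char-spaces} via the equivalence of categories $\mathfrak{r}: \mathcal{A} \to \mathcal{B}$, $\mathfrak{s}: \mathcal{B} \to \mathcal{A}$ described above. The three characterizing conditions in each theorem are parallel, so the task is really to verify that every notion appearing in the graded-scheme statement (viz.\ $K$-Krull scheme, $K$-prime divisor, discrete $K$-valuation sheaf, $\mathcal{K}_K$, $\WDiv^K$, $\Cl^K$, good quotient by $K$) agrees under $\mathfrak{r}/\mathfrak{s}$ with its invariant-geometric counterpart ($H$-normal prevariety, $H$-prime divisor, discrete $K$-valuation on $\mathcal{K}_H$, etc.) for $H = \Spec_{\max}(\KK[K])$.

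First I would set up the dictionary on affine charts. For an affine graded scheme $\Spec_K(R) \in \mathcal{A}$ the corresponding affine $H$-variety is $\Spec_{\max}(R)$ with its standard $H$-action, and Remark~\ref{rem:inv-stalks-and-free-actions}(i) identifies sections of $\mathcal{O}_{Z,H}$ over affine invariant opens with $K$-graded localizations of $R$; hence $K$-normal and $H$-normal agree, and since our objects are of finite type (hence $K$-noetherian), $K$-Krull coincides with $K$-normal. The correspondence of $K$-prime ideals with $\Omega_{Z,H}$-closed-irreducible subsets matches $K$-prime divisors of $\rq{X}$ with $H$-prime divisors of $\rq{Z}$, whence $\imath_Y(\ZZ)$, $\nu_Y$, $\mathcal{K}_K$, $\WDiv^K$, $\div^K$, $\PDiv^K$, $\Cl^K$ on $\rq{X}$ translate termwise to the corresponding $H$-objects on $\rq{Z}$. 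Good quotients by $K$ in the graded sense translate to good quotients by $H$ in the usual sense since both reduce to the statement that $\mathcal{O}_Z \cong (q_*\mathcal{O}_{\rq{Z},H})_0$ on invariant affine charts. Gluing these identifications gives the full dictionary.

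With this in hand, conditions (i)--(iii) of Theorem~\ref{thm:char-of-char-spaces-of-prevars} are literally the translations of conditions (i)--(iii) of Theorem~\ref{intro-thm:char-of-graded-char-spaces}. Applying the latter to $\mathfrak{s}(\rq{Z}) \to \mathfrak{s}(Z)$ yields that $\mathfrak{s}(Z)$ is a Krull scheme and $\mathfrak{s}(\rq{Z}) \to \mathfrak{s}(Z)$ is a graded characteristic space, which by $\mathfrak{r}$ is equivalent to $Z$ being normal and $q: \rq{Z} \to Z$ being a (prevariety) characteristic space $\Spec_Z(\mathcal{R})$ for a Cox sheaf $\mathcal{R}$. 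The extended commutative diagram and the supplement on prime divisors transfer analogously from the supplement of Theorem~\ref{intro-thm:char-of-graded-char-spaces}. For the final identification of the distinguished point in the fiber, the unique point $\rq{x}$ of Theorem~\ref{intro-thm:char-of-graded-char-spaces} corresponds under $\mathfrak{r}$ to the unique closed $H$-orbit $H\rq{z} \subseteq q^{-1}(z)$ provided by Proposition~\ref{prop:graded-good-quotients}(iii), and the stalk equality $\mathcal{O}_{\rq{X}, \rq{x}} = \mathcal{R}_x$ becomes $(\mathcal{O}_{\rq{Z},H})_{H\rq{z}} = \mathcal{R}_z$ via Remark~\ref{rem:inv-stalks-and-free-actions}(i).

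The main obstacle, and the only real work, is to check rigorously that the dictionary truly respects \emph{sheaf-theoretic} constructions (valuation sheaves, Weil divisor presheaves, the morphism $\div^K \leftrightarrow \div^H$) and not merely their sections on affine invariant opens. Since every $H$-prevariety in $\mathcal{B}$ admits an invariant affine cover, and the presheaves involved are all defined coefficientwise on such a cover (so sheafification is not an issue here), this check is a routine but careful gluing argument; once it is carried out, the theorem follows at once.
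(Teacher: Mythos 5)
Your proposal is correct and follows essentially the same route as the paper: the paper gives no separate proof of Theorem~\ref{thm:char-of-char-spaces-of-prevars}, presenting it as the direct translation of Theorem~\ref{intro-thm:char-of-graded-char-spaces} through the equivalence $\mathfrak{r}/\mathfrak{s}$ between graded schemes of finite type and $H$-prevarieties, using exactly the dictionary ($K$-normal $\leftrightarrow$ $H$-normal, $K$-prime divisors $\leftrightarrow$ $H$-prime divisors, invariant structure sheaf stalks via Remark~\ref{rem:inv-stalks-and-free-actions}) that you spell out. Your explicit attention to the gluing of the sheaf-theoretic constructions is a reasonable elaboration of what the paper leaves implicit.
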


This result only required those properties of normal prevarieties which they share with Krull schemes. 
Using the fact that $X_{\reg}$ is big in $X$ and $\WDiv(X_{\reg}) = {\rm CaDiv}(X_{\reg})$ combined with noetherianity yields additional properties of characteristic spaces, e.g. irreducibility and normality, and the property that $q^{-1}(X_{\reg})$ is a big subset on which $H$ acts freely and $q$ is geometric, see \cite[Sect. I.5, I.6]{ArDeHaLa} for proofs and further properties of characteristic spaces. 

\begin{remark}\label{rem:toric-graded-schemes}
Recall that a lattice is a finitely generated free abelian group. 
By a separated {\em toric} graded scheme over $\KK$ we mean a quasi-compact separated graded scheme $X$ of finite type over $\KK$ such that 
\begin{enumerate}
 \item the grading group $M = gr(X)$ is a lattice, 
 \item $X$ is $M$-normal, 
 \item $X$ contains $\Spec_M(\KK[M])$ as an open subset, 
 \item $X$ is {\em effectively graded}, i.e. $\bangle{\deg_M(\mathcal{O}(U))} = M$ for all affine open $U \subseteq X$. 
\end{enumerate}
$Z \in \mathcal{B}$ is a (separated) toric variety if and only if $X = \mathfrak{s}(Z) \in \mathcal{A}$ is a separated toric graded scheme. 
If $\Sigma$ is the fan in $N = M^*$ describing $Z$ where $M = gr(X)$, then $\Omega_{Z, T}$ is finite and its basis consists of the affine invariant charts $\{Z_{\sigma}\}_{\sigma \in \Sigma}$. The $\Omega_{Z,T}$-irreducible $\Omega_{Z,T}$-closed subsets are the orbit closures $\{V(\sigma)\}_{\sigma \in \Sigma}$.   
For $\sigma \in \Sigma$ let $I_{\sigma} \trianglelefteq \KK[M \cap \sigma^{\vee}]$ be the vanishing ideal of the closed orbit of $Z_{\sigma}$. 
Then $\mathcal{O}_{X, I_{\sigma}} = \KK[M \cap \sigma^{\vee}]_{I_{\sigma}} = \KK[M \cap \sigma^{\vee}]$ 
and there is a natural bijection 
 \begin{align*}
  \Sigma & \longrightarrow X \\
  \sigma & \longmapsto I_{\sigma} \\
  \deg_M(\mathcal{O}_{X,p})^{\vee} & \longmapsfrom p 
 \end{align*}
 Furthermore, applying Example~\ref{ex:graded-spec-of-monoid-algebra} to each monoid $\sigma^{\vee} \cap M$ and its monoid algebra $\KK[\sigma^{\vee} \cap M]$, we see that $X$ is canonically homeomorphic to the $\mathbb{F}_1$-scheme $A_{\Sigma}$ obtained by gluing the spectra of $(\sigma^{\vee} \cap M)\sqcup \{\infty\}$. 
 For a cone $\sigma \in \Sigma$ we denote by $p_{\sigma}$ the closed point $(\sigma^{\vee} \cap M)\sqcup \{\infty\} \setminus (\sigma^{\perp} \cap M)$ of $\Spec((\sigma^{\vee} \cap M)\sqcup \{\infty\})$.  
 The canonical bijection between $A_{\Sigma}$ and $\Sigma$ is
 \begin{align*}
  \Sigma & \longrightarrow A_{\Sigma} \\
  \sigma & \longmapsto p_{\sigma} \\
  (\mathcal{O}_{A_{\Sigma},p} \setminus \{\infty\})^{\vee} & \longmapsfrom p 
 \end{align*}
 A different kind of connection between the categories of $\mathbb{F}_1$-schemes of finite type and toric varieties is established in \cite{De2008}. 
 \end{remark}

\end{document}